\DeclareSymbolFontAlphabet{\amsmathbb}{AMSb}%
\newcommand{\nc}{\newcommand}
\nc{\dmo}{\DeclareMathOperator}
\dmo{\ra}{\rightarrow}
\dmo{\Prob}{\mathbb{P}}
\dmo{\E}{\mathbb{E}}
\dmo{\N}{\mathbb{N}}
\dmo{\Z}{\mathbb{Z}}
\dmo{\Q}{\mathbb{Q}}
\dmo{\R}{\mathbb{R}}
\dmo{\C}{\mathcal{C}}
\dmo{\X}{\mathcal{X}}
\dmo{\U}{\mathcal{U}}
\dmo{\T}{\mathcal{T}}
\dmo{\F}{\mathcal{F}}
\dmo{\AC}{\mathcal{AC}}
\dmo{\w}{\omega}
\dmo{\MIN}{\mathcal{MIN}}
\dmo{\Mod}{Mod}
\dmo{\PMod}{PMod}
\dmo{\PMF}{\mathcal{PMF}}
\dmo{\Mat}{Mat}
\dmo{\supp}{supp}
\dmo{\UE}{\mathcal{UE}}
\dmo{\vol}{vol}
\dmo{\B}{B}
\dmo{\PB}{PB}
\dmo{\PR}{PSL(2,\mathbb{R})}
\dmo{\GL}{GL(k, \mathbb{C})}
\dmo{\SL}{SL(2, \mathbb{Z})}
\dmo{\Isom}{Isom}
\dmo{\RP}{\mathbb{R} \mathrm{P}}
\dmo{\I}{\mathcal{I}}
\dmo{\el}{\ell_{\C}}
\dmo{\NN}{\mathcal{N}}
\dmo{\rk}{rank}
\dmo{\tr}{tr}
\dmo{\llangle}{\langle\langle}
\dmo{\rrangle}{\rangle\rangle}
\dmo{\Unif}{Unif}
\dmo{\Out}{Out}
\dmo{\diam}{\operatorname{diam}}
\dmo{\Aut}{\operatorname{Aut}}
\dmo{\sumRho}{\mathscr{B}}
\dmo{\stopping}{\vartheta}
\dmo{\diffPivot}{\mathcal{P}}
\dmo{\diffEvenPivot}{\mathcal{Q}}
\dmo{\varGam}{\Upsilon}
\dmo{\prodSeq}{\Pi}
\dmo{\NSupp}{N_{supp}}
\dmo{\KSleep}{\mathnormal{K_{sleep}}}
\dmo{\Devi}{\upsilon}
\dmo{\DeviStr}{\iota}
\dmo{\DeviDisc}{\varrho}
\dmo{\sphere}{\mathcal{S}}
\dmo{\DeviUni}{\varsigma}
\dmo{\wVar}{\check{\w}}
\dmo{\muVar}{\eta}
\dmo{\sublinear}{\Delta}
\tikzset{->-/.style={decoration={
  markings,
  mark=at position #1 with {\arrow{>}}},postaction={decorate}}}
\nc{\nt}{\newtheorem}
\newtheorem{thm}{{\bf Theorem}}[section]
\newtheorem{conv}[thm]{{\bf Convention}}
\newtheorem{lem}[thm]{{\bf Lemma}}
\newtheorem{cor}[thm]{{\bf Corollary}}
\newtheorem{prop}[thm]{{\bf Proposition}}
\newtheorem{claim}[thm]{Claim}
\newtheorem{definition}[thm]{Definition}
\numberwithin{equation}{section}
\newtheorem{obs}[thm]{Observation}
\title[Random walks and contracting elements II]{Random walks and contracting elements II: Translation length and Quasi-isometric embedding}
\date{\today}
\author{Inhyeok Choi}
\address{%
		June E Huh Center for Mathematical Challenges, KIAS\\
		85 Hoegiro Dongdaemun-gu, Seoul 02455, Republic of Korea
}
\email{%
        inhyeokchoi48@gmail.com
        }
\begin{document}

\begin{abstract}
Continuing from \cite{choi2022random1}, we study random walks on metric spaces with contracting elements. We prove that random subgroups of the isometry group of a metric space is quasi-isometrically embedded into the space. We discuss this problem in two ways, namely, in the sense of random walks and counting problem. We also establish the genericity of contracting elements and the CLT and its converse for translation length.

\noindent{\bf Keywords.} Random walk, Outer space, Teichm{\"u}ller space, CAT(0) space, Central limit theorem, Contracting property, Quasi-isometric embedding

\noindent{\bf MSC classes:} 20F67, 30F60, 57M60, 60G50
\end{abstract}

\maketitle

\section{Introduction}\label{section:intro}

This is the second in a series of articles concerning random walks on metric spaces with contracting elements. This series is a reformulation of the previous preprint \cite{choi2022limit} announced by the author, aiming for clearer and more concise exposition. 

Let $X$ be a geodesic metric space and let $o \in X$. We say that a subset $A$ of $X$ is \emph{contracting} if the closest point projection of a geodesic $\gamma$ onto $A$ is uniformly bounded whenever $\gamma$ is far away from $A$. An isometry $g$ of $X$ is \emph{contracting} if the orbit $\{g^{n} o\}_{n \in \Z}$ is a contracting quasigeodesic (see Definition \ref{dfn:contracting}). Two contracting isometries $g$ and $h$ of $X$ are \emph{independent} if there orbits have unbounded Hausdorff distance. 

\begin{conv}\label{conv:main}
Throughout, we assume that: \begin{itemize}
\item $(X, d)$ is a geodesic metric space;
\item $G$ is a countable group of isometries of $X$, and
\item $G$ contains two independent contracting isometries.
\end{itemize}
We also fix a basepoint $o \in X$.
\end{conv}

This type of contraction takes place in various non-positively curved spaces including (relatively) hyperbolic spaces, CAT(0) spaces, Teichm{\"u}ller space, Culler-Vogtmann Outer space and small cancellation groups. We refer the readers to \cite{sisto2013projections}, \cite{bestvina2009higher}, \cite{minsky1996quasi-projections}, \cite{algom-kfir2011strongly} and \cite{arzhantseva2019negative} for the treatment for each cases.

We wish to sample a random isometry $g$ in $G$. For this we fix i.i.d. random variables $g_{i}$'s distributed according to a probability measure $\mu$ and define $Z_{n} := g_{1} \cdots g_{n}$. The sequence of RVs $(Z_{n})_{n\ge 0}$ then constitute the random walk generated by $\mu$. We say that the random walk $(Z_{n})_{n\ge0}$ is \emph{non-elementary} if the support of $\mu$ generates a semigroup that contains two independent contracting isometries (see Subsection \ref{subsection:RW}).

Given a non-elementary random walk on $G$, one can ask the asymptotic behavior of the displacement $d(o, go)$ of a random isometry $g$. This was indeed pursued in \cite{choi2022random1}, discussing the strong law of large numbers (SLLN) with an exponential bound, central limit theorem (CLT) and the law of the iterated logarithm (LIL). In this article, we aim to investigate yet another quantity, the translation length $\tau(g):= \lim_{i} d_{X}(o, g^{i} o)/i$, of a random isometry $g$. The translation length of an element $g$ of $G$ often encodes interesting dynamical phenomena. For example, the translation length of a deck transformation of a negatively curved manifold is the length of the corresponding closed geodesic. The translation length of a pseudo-Anosov mapping class (irreducible outer automorphism, resp.) on Teichm{\"u}ller space (Outer space, resp.) equals its stretch factor as a self-map on the surface (expansion factor as a train-track map on the graph, resp.).

Our first main result is companion to the corresponding result for displacement (\cite[Theorem 6.4]{choi2022random1}).

\begin{theorem} \label{thm:expBd}
Let $(X, G, o)$ be as in Convention \ref{conv:main} and let $(Z_{n})_{n \ge 0}$ be the random walk generated by a non-elementary measure $\mu$ on $G$. Let $\lambda(\mu)$ be the escape rate of $\mu$, i.e., $\lambda(\mu) := \lim_{n \rightarrow \infty} \E_{\mu^{\ast n}} [d(o, go)] / n$. Then for each $0 < L < \lambda(\mu)$, there exists $K>0$ such that for each $n$ we have  \[
\Prob\Big(\textrm{$Z_{n}$ is contracting and $\tau(Z_{n}) \ge Ln$}\Big) \ge 1-K e^{-n/K}.
\]
\end{theorem}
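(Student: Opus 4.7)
The plan is to upgrade the exponential bound for displacement from \cite[Theorem 6.4]{choi2022random1} to the corresponding statement for translation length, by showing that on a high-probability event the random word $Z_{n}$ not only has large displacement but is contracting with a quasi-axis passing close to the basepoint $o$. Once this is done, translation length and displacement agree up to an additive constant, and the desired bound transfers directly.

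First, I would recall the pivoting machinery from Part I. With probability at least $1-Ke^{-n/K}$, the trajectory $o, Z_{1}o, \ldots, Z_{n}o$ admits a collection of \emph{pivot times} at which the path is strongly aligned with a fixed reference Schottky set. Moreover, because the inter-pivot gaps have geometric tails, on an event of the same exponential quality one can locate pivot times $k_{1} \le \epsilon n$ and $k_{2} \ge (1-\epsilon)n$, so that
\[
d(o, Z_{k_{1}}o) \le \epsilon n, \qquad d(Z_{k_{2}}o, Z_{n}o) \le \epsilon n,
\]
where $\epsilon > 0$ may be chosen arbitrarily small in advance at the cost of enlarging $K$.

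Second, I would prove a deterministic geometric lemma: if $g \in G$ is an isometry and $y_{1}, y_{2} \in X$ are points such that $y_{1}, gy_{1}, g^{2}y_{1}, \ldots$ are consecutively aligned with a common Schottky set (which is precisely the information provided by pivots near the endpoints of the word representing $g$), then $g$ is contracting, the orbit $\{g^{i}y_{1}\}$ is a contracting quasigeodesic serving as a quasi-axis $A_{g}$, and there is a uniform constant $C$ depending only on the Schottky set with
\[
d(y_{1}, A_{g}) \le C, \qquad \bigl|\tau(g) - d(y_{1}, gy_{1})\bigr| \le C.
\]
Applying this to $g = Z_{n}$ and $y_{1} = Z_{k_{1}}o$ (whose image under $Z_{n}$ is $Z_{n}Z_{k_{1}}o$, aligned in the same Schottky fashion by pivots near both ends of the word), $Z_{n}$ is contracting and
\[
\tau(Z_{n}) \ge d(Z_{k_{1}}o, Z_{n}Z_{k_{1}}o) - C \ge d(o, Z_{n}o) - 2\epsilon n - C.
\]

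Third, I would combine the two steps with the displacement bound \cite[Theorem 6.4]{choi2022random1}, which gives $d(o, Z_{n}o) \ge (\lambda(\mu) - \epsilon)n$ on a further event of probability $1 - Ke^{-n/K}$. Intersecting the three exponentially likely events and taking $\epsilon < \tfrac{1}{4}(\lambda(\mu) - L)$ yields $\tau(Z_{n}) \ge Ln$ for all sufficiently large $n$, and finitely many small values of $n$ are absorbed into the constant $K$.

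The main obstacle is the deterministic lemma in the second step: one must ensure that the \emph{same} pivot times that certify large displacement also certify that $Z_{n}$ is contracting and that its quasi-axis is anchored near $Z_{k_{1}}o$. Concretely, this requires that pivots near the two ends of the word, when translated by $Z_{n}$, remain aligned with the pivots of the \emph{next} copy of $Z_{n}$, so that the bi-infinite concatenation $\bigcup_{j \in \Z} Z_{n}^{j}[Z_{k_{1}}o, Z_{k_{2}}o]$ is a contracting quasigeodesic. The pivoting construction of Part I is designed precisely to yield this kind of self-aligned Schottky structure, so the lemma should follow from the fellow-traveling and bounded-projection axioms of the contracting framework, but verifying compatibility of the two-sided alignment with the one-sided pivot construction of Part I is the step requiring the most care.
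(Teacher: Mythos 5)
Your overall architecture — alignment of Schottky/pivot data near both ends of the word, periodic concatenation of the aligned axes to show $Z_{n}$ is contracting, and a comparison of $\tau(Z_{n})$ with a displacement — is the same as the paper's. But there is a genuine gap in your second step, precisely at the inequality $d(o, Z_{k_{1}}o) \le \epsilon n$ (and its mirror at the other end) claimed to hold with probability $1-Ke^{-n/K}$. The geometric tails from the pivoting construction control the \emph{indices} $k_{1}, k_{2}$, not the \emph{displacements} $d(o,Z_{k_{1}}o)$ and $d(Z_{k_{2}}o,Z_{n}o)$. Since $\mu$ carries no moment assumption, already the single increment $g_{1}$ satisfies $d(o,g_{1}o) > \epsilon n$ with probability $\mu(\{g : d(o,go)>\epsilon n\})$, which tends to $0$ but can do so arbitrarily slowly; so the event you intersect is not exponentially likely, and the chain $\tau(Z_{n}) \ge d(o,Z_{n}o) - 2\epsilon n - C$ cannot be established this way. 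The paper explicitly flags this: one cannot expect the initial or terminal displacement of a heavy-tailed random path to be small compared to $n$ with high probability.

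The fix, which is what the paper does, is to never pass through $d(o,Z_{n}o)$ at all. From the two-sided alignment you get (correctly) that the bi-infinite concatenation of the translated Schottky axes is a contracting quasigeodesic, and Lemma \ref{lem:BGIPWitness} then gives
\[
\tau(Z_{n}) \ge \min\bigl\{\, d(Z_{i}o, Z_{n-j}o) : 0 \le i,j \le \epsilon n \,\bigr\},
\]
i.e.\ a lower bound by the \emph{middle} displacement rather than the full one. Each $d(Z_{i}o,Z_{n-j}o)$ is distributed as the displacement of an $(n-i-j)$-step walk with $n-i-j \ge (1-2\epsilon)n$, so \cite[Theorem 6.4]{choi2022random1} applied with $L' = L/(1-2\epsilon) < \lambda(\mu)$, together with a union bound over the at most $(\epsilon n)^{2}$ pairs $(i,j)$, gives the exponential estimate. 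Your deterministic alignment lemma and the contracting conclusion (via Lemma \ref{lem:BGIPConcat} and fellow-traveling of the concatenated axis with $\{Z_{n}^{k}o\}_{k\in\Z}$) are otherwise sound and match the paper's argument.
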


This result has been observed by Sisto for simple random walks on various spaces in \cite{sisto2018contracting}. In the absence of moment conditions, Maher and Tiozzo observed in \cite{maher2018random} that non-elementary random walks on Gromov hyperbolic spaces favor loxodromic elements in probability. Their methods and Benoist-Quint's estimates in \cite{benoist2016central} also lead to the stronger SLLN for translation length under finite second moment condition, as noted by Dahmani and Horbez \cite{dahmani2018spectral}. Dahmani and Horbez also deduced the same SLLN on Teichm{\"u}ller space. Later, Baik, Choi and Kim obtained the same SLLN under finite first moment assumption using ergodic theorems and Maher-Tiozzo's notion of persistent joint \cite{baik2021linear}. We also note Le Bars' recent result \cite{le-bars2022central} that non-elementary random walks on a proper CAT(0) space favor contracting isometries in probability. Theorem \ref{thm:expBd} generalizes these results by obtaining an exponential bound from below without any moment condition. Finally, Goldsborough and Sisto recently presented a QI-invariant theory for exponential genericity of positive translation length \cite{goldsborough2021markov}. Their theory applies to (relatively) hyperbolic groups, acylindrically hyperbolic 3-manifold groups, right-angled Artin groups and many more.

Genericity of contracting elements is a recurring theme that has been investigated in various settings. For example, Yang describes genericity of contracting elements in counting problem for proper actions on a metric space \cite{yang2020genericity}. The novelty of Theorem \ref{thm:expBd} is that it discusses genericity of contracting elements for possibly non-WPD actions on spaces and for non-elementary random walks without moment condition.

We also provide a quantitative comparison between the displacement and the translation length of a random isometry.

\begin{theorem}\label{thm:discrepancy}
Let $(X, G, o)$ be as in Convention \ref{conv:main}, and $(Z_{n})_{n\ge 0}$ be the random walk generated by a non-elementary measure $\mu$ on $G$. \begin{enumerate}
\item If $\mu$ has finite $p$-th moment for some $p > 0$, then \[
\lim_{n \rightarrow \infty} \frac{1}{n^{1/2p}} [d(o, Z_{n} o)- \tau(Z_{n}) ]= 0 \quad \textrm{a.s.}
\]
\item If $\mu$ has finite first moment, then there exists $K>0$ such that \[
\limsup_{n \rightarrow \infty} \frac{1}{\log n} [d(o, Z_{n} o)- \tau(Z_{n}) ] \le K \quad \textrm{a.s.}
\]
\end{enumerate}
\end{theorem}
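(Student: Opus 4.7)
The plan is to reduce the comparison between $d(o, Z_n o)$ and $\tau(Z_n)$ to a bound on the distance from $o$ to a pivotal quasi-axis of $Z_n$, and then to estimate that distance using the Schottky machinery from \cite{choi2022random1}. The deterministic starting point is the following: if $g \in G$ is contracting and $A$ is a $g$-invariant quasi-axis, then for $p \in A$ nearest to $o$ the triangle inequality together with contracting geometry yields
$$
d(o, g o) \le 2 d(o, A) + d(p, g p) \le 2 d(o, A) + \tau(g) + C_0,
$$
where $C_0$ depends only on the Schottky parameters. Applying this to $g = Z_n$, on the event $\{Z_n \text{ is contracting}\}$, which by Theorem \ref{thm:expBd} has complement of probability at most $K e^{-n/K}$, the problem reduces to bounding $d(o, A_n)$ for a quasi-axis $A_n$ of $Z_n$.

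Next, I would use the pivotal construction of \cite{choi2022random1} to localize $A_n$. The decomposition of $Z_n$ as a product with Schottky letters at pivot times $\rho_1(n) < \cdots < \rho_k(n)$ forces $A_n$ to pass within a uniformly bounded distance of each pivot orbit point $Z_{\rho_i(n)} o$, so that
$$
d(o, A_n) \le \min\bigl(d(o, Z_{\rho_1(n)} o),\, d(Z_{\rho_k(n)} o, Z_n o)\bigr) + O(1).
$$
Both the first-pivot time $\rho_1(n)$ and the terminal gap $n - \rho_k(n)$ carry uniform exponential tails independent of any moment hypothesis, and under the monotonicity of the pivot scheme the first pivot time stabilizes a.s.\ to a finite $\rho_1^\infty$; the prefix distance $d(o, Z_{\rho_1(n)} o)$ is therefore eventually constant along each trajectory and negligible. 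The suffix distance $d(Z_{\rho_k(n)} o, Z_n o)$, by contrast, must be controlled uniformly in $n$.

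For the suffix, reversing the random walk shows that $d(Z_{\rho_k(n)} o, Z_n o)$ has the same distribution as a prefix distance for the reversed walk, hence admits the bound $\sum_{i=1}^{\rho} d(o, g_i o)$ with $\rho$ geometrically distributed. Under finite $p$-th moment, a split on $\{\rho \le c \log n\}$ combined with Markov's inequality on summed step sizes, together with the two-sided pivotal structure that makes prefix and suffix of $Z_n$ conditionally independent around a central contracting block, yields a summable Borel--Cantelli bound
$$
\sum_n \Prob\bigl(d(Z_{\rho_k(n)} o, Z_n o) > \epsilon n^{1/2p}\bigr) < \infty,
$$
which gives (1). For (2), the fact that Schottky letters at pivots come from a fixed bounded set sharpens this to $\Prob(\cdot > K \log n) \le n^{-cK}$, and Borel--Cantelli with $K$ large enough produces the a.s.\ $O(\log n)$ bound.

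The chief technical difficulty I anticipate is obtaining the sharp exponent $1/(2p)$ rather than the naive $1/p$: a direct Markov bound on the suffix distance from $p$-th moment of steps yields only $\Prob(\cdot > t) \le C t^{-p}$, which Borel--Cantelli converts to $o(n^{1/p})$. Recovering the factor of two will require the $\ell^2$-type pairing of conditionally independent prefix and suffix contributions around a midpoint pivot, so that the square of the suffix distance inherits a finite $p$-th moment bound and Markov delivers the correct $t^{-2p}$ tail; implementing this pairing cleanly within the pivotal framework of \cite{choi2022random1} is the main work.
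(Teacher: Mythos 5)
Your overall route is the same as the paper's: reduce $d(o,Z_no)-\tau(Z_n)$ to (twice) the distance from $o$ to an invariant contracting axis of $Z_n$, localize that axis near the pivot orbit points so that the relevant quantity is $\min\bigl(d(o,Z_{\rho_1(n)}o),\, d(Z_{\rho_k(n)}o,Z_no)\bigr)$, and finish with Borel--Cantelli. The paper implements this via the two-sided deviation variables $\Devi$ and $\check\Devi$ of \cite{choi2022random1} and the alignment of four Schottky axes placed near $0$, $\lfloor n/2\rfloor$ and $n$, arriving at exactly the bound $d(o,Z_no)-\tau(Z_n)\le 2\min\{d(o,Z_{\Devi_0}o),d(o,\check Z_{\check\Devi_0}o)\}$ outside an exponentially small event.

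The genuine gap is in the quantitative core. First, you propose to discard the prefix term as ``eventually constant and negligible'' and to control only the suffix; with only the suffix in hand, a finite $p$-th moment gives a tail of order $t^{-p}$ and Borel--Cantelli yields $o(n^{1/p})$, not $o(n^{1/2p})$ --- so the framing defeats the exponent you need. (Also, in the paper's construction the first deviation time does not stabilize: $\Devi_0$ is defined relative to a splitting of the walk at $\lfloor n/2\rfloor$ and re-padded for each $n$; what is available is a uniform exponential tail, Lemma \ref{lem:Devi}, not a.s.\ stabilization.) You then correctly sense that the factor of two must come from pairing the prefix and suffix, but the mechanism you describe --- ``the square of the suffix distance inherits a finite $p$-th moment bound'' --- is not the right statement: no amount of independence improves the moment of the suffix alone. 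The correct statement, which is the paper's Lemma \ref{lem:minDevi} (quoted from \cite{choi2022random1}), is that the \emph{minimum} of the two deviations, which are functions of disjoint blocks of increments and hence independent, has finite $2p$-th moment; one must keep both terms and take the min, not drop the prefix. Separately, for part (2) your asserted tail $\Prob(\text{suffix}>K\log n)\le n^{-cK}$ is not justified: the non-Schottky increments between pivots are unbounded under a mere first-moment hypothesis. The paper instead combines the a.s.\ eventual bound $\Devi_0(n)\le K'\log n$ (Borel--Cantelli on the exponential tail of the deviation \emph{time}) with the SLLN for displacement to convert time into distance, giving $d(o,Z_{\Devi_0(n)}o)\le 4K'\lambda\log n$ eventually a.s.
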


There have been many results that capture the sublinear discrepancy between the displacement and translation length for random walks with bounded support or with finite exponential moment: see \cite{maher2012exp}, \cite{maher2018random}. Moreover, using Benoist-Quint's strategy in \cite{benoist2016central}, \cite{benoist2016lin} and its application to other spaces (\cite{horbez2018clt}, \cite{dahmani2018spectral}), one can achieve sublinear discrepancy for random walks with finite first moment. We improve these observations by proving that random walks with finite $(1/2)$-th moment exhibit sublinear discrepancy between displacement and translation length.

Combining the CLT for displacement proved in \cite{choi2022random1} and Theorem \ref{thm:discrepancy}, we deduce the CLT for translation length:

\begin{theorem}[CLT and its converse]\label{thm:CLT}
Let $(X, G, o)$ be as in Convention \ref{conv:main}, and $(Z_{n})_{n\ge 0}$ be the random walk generated by a non-elementary measure $\mu$ on $G$. If $\mu$ has finite second moment, then $\frac{1}{\sqrt{n}} (d(o, Z_{n} o) - n \lambda)$ and $\frac{1}{\sqrt{n}} (\tau(Z_{n}) - n \lambda)$ converge to the same Gaussian distribution in law.

Conversely, if $\mu$ has infinite second moment, then for any sequence $(c_{n})_{n\ge 0}$, neither $\frac{1}{\sqrt{n}}(d(o, Z_{n}o) - c_{n})$ nor $\frac{1}{\sqrt{n}} (\tau(Z_{n}) - c_{n})$ converges in law.
\end{theorem}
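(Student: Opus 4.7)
My approach treats the two halves separately and uses Theorem B as the bridge in both directions.

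For the forward direction, I plan to apply Theorem B(1) with $p=2$: under the finite second moment hypothesis on $\mu$, this gives $n^{-1/4}[d(o, Z_n o) - \tau(Z_n)] \to 0$ almost surely, hence a fortiori $n^{-1/2}[d(o, Z_n o) - \tau(Z_n)] \to 0$ almost surely. The CLT for displacement proved in \cite{choi2022random1} identifies a Gaussian limit for $n^{-1/2}(d(o, Z_n o) - n\lambda)$, and Slutsky's theorem, applied to the sum of this weakly convergent sequence with an almost-surely vanishing one, transfers the limit verbatim to $n^{-1/2}(\tau(Z_n) - n\lambda)$.

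For the converse, I take the converse CLT for displacement from \cite{choi2022random1} as an input and reduce the translation-length statement to it. When $\mu$ has finite first moment but infinite second moment, Theorem B(2) yields $d(o, Z_n o) - \tau(Z_n) = O(\log n) = o(\sqrt{n})$ almost surely; the identity
\[
\frac{d(o, Z_n o) - c_n}{\sqrt{n}} = \frac{\tau(Z_n) - c_n}{\sqrt{n}} + \frac{d(o, Z_n o) - \tau(Z_n)}{\sqrt{n}}
\]
shows that convergence in law for either of the first two terms along some $c_n$ implies it for the other, so the non-convergence for displacement transfers to translation length.

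The remaining regime, where $\mu$ has infinite first moment, is the technical bottleneck. Theorem B no longer bounds the discrepancy directly, but Theorem A still forces $\tau(Z_n)/n \to \infty$ in probability since $\lambda(\mu) = \infty$. To rule out every $c_n$ tightening $n^{-1/2}(\tau(Z_n) - c_n)$, I plan to adapt the pivotal/deviation machinery of \cite{choi2022random1}: at pivot indices $k$, a single step of size $d(o, g_k o)$ contributes to $\tau(Z_n)$ up to bounded error, so the infinite second moment of the step distribution produces a divergent lower bound on the pivotal contribution to $\operatorname{Var}(\tau(Z_n))/n$, precluding tightness of any centered $\sqrt{n}$-normalization. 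Formalizing this propagation of heavy tails from the step distribution into translation length, rather than displacement as already handled in \cite{choi2022random1}, is the step I expect to require the most care, and I anticipate that the pivot construction can be reused essentially unchanged after checking that the error between a pivotal step's effect on $d(o, Z_n o)$ and on $\tau(Z_n)$ is uniformly bounded.
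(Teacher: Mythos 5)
The forward direction of your proposal is correct and is essentially the paper's argument: Theorem \ref{thm:discrepancy} gives $d(o,Z_no)-\tau(Z_n)=o(\sqrt{n})$ almost surely (the paper uses part (2), you use part (1) with $p=2$; either works under finite second moment), and Slutsky's theorem transfers the displacement CLT to $\tau(Z_n)$.

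The converse is where there is a genuine gap. First, your reduction in the finite-first-moment regime takes the converse of the CLT for \emph{displacement} as a black box from the earlier paper, but that statement is not an available input: it is part of what Theorem \ref{thm:CLT} asserts (``neither \dots nor \dots'') and is proven here, in Proposition \ref{prop:CLTConverse}, by exactly the pivoting argument your reduction is meant to avoid. Second, the remaining case is left as a heuristic at the point where the work actually is. A divergent lower bound on $\operatorname{Var}(\tau(Z_n))/n$ does not by itself rule out convergence in law after an arbitrary centering $c_n$; the paper's mechanism is (i) symmetrize against an independent copy $\dot Z_n$ to eliminate $c_n$; (ii) use Corollary \ref{cor:pivotCorCombined} to write, on an event of probability tending to $1$ and conditionally on each class of a partition, both $d(o,Z_no)$ and $\tau(Z_n)$ as a class-constant term plus a sum $I_2$ of $m(n)\asymp n$ i.i.d.\ variables with infinite second moment, minus Gromov-product corrections $I_3$ that are $O(\sqrt{m(n)})$ with high probability (Lemma \ref{lem:alignedGromov}), up to an $O(E_0)$ error; and (iii) invoke the classical fact that a sum of i.i.d.\ symmetric variables with infinite variance exceeds $K'\sqrt{m(n)}$ with probability at least $1/5$ for every $K'$. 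The geometric input that makes step (ii) valid for $\tau(Z_n)$ — your claim that a pivotal step contributes to $\tau(Z_n)$ ``up to bounded error'' — is Lemma \ref{lem:alignedGromovTrans}, and it is not free: one must pivot simultaneously at the front and the back (the sets $S_k^{\ast}$, $S_{n-k+1}^{\ast}$ and Lemmas \ref{lem:selfRepulse}, \ref{lem:selfRepulse2}) so that the bi-infinite $\langle Z_n\rangle$-orbit of the marked Schottky axes is aligned, which is what pins $\tau(Z_n)$ to $d(x_0,x_{2m(n)})$ within $E_0$; the probabilistic cost of these extra conditions is controlled in Lemma \ref{lem:trLengthPivotProb}. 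Once this is done, the case split on finiteness of the first moment is unnecessary — the symmetrization argument handles every infinite-second-moment measure uniformly and yields both the displacement and the translation-length halves at once.
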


CLT for translation length and the converses of CLT for Gromov hyperbolic spaces and Teichm{\"u}ller space have been observed in \cite{choi2021clt}. Here, we establish the same result for general spaces with contracting isometries.

Meanwhile, Taylor and Tiozzo proved in \cite{taylor2016random} that random subgroups of a weakly hyperbolic group is quasi-isometrically embedded into the ambient Gromov hyperbolic space, in the sense that such event happens for eventual probability 1. See also \cite{maher2019random} and \cite{maher2021random} for additional conclusions under geometric assumptions, e.g., acylindricity or WPD. These results are linked to a deeper understanding of convex-cocompact subgroup of mapping class groups and random extensions of surface groups and free groups.

The following theorem strengthens the conclusion of Taylor-Tiozzo's theorem and generalizes it to more general spaces.

\begin{theorem}\label{thm:qi} 
Let $(X, G, o)$ be as in Convention \ref{conv:main}, and $(Z_{n}^{(1)}, \ldots, Z_{n}^{(k)})_{n \ge 0}$ be $k$ independent random walks generated by a non-elementary measure $\mu$ on $G$. Then there exists $K>0$ such that \[
\Prob \left[ \langle Z_{n}^{(1)}, \ldots, Z_{n}^{(k)} \rangle \,\,\textrm{is q.i. embedded into a quasi-convex subset of $X$} \right] \ge 1- K e^{-n/K}.
\]
\end{theorem}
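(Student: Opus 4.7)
The plan is to combine Theorem~\ref{thm:expBd} applied to each of the $k$ walks with a Schottky/ping-pong argument for contracting isometries, using the pivotal-time machinery developed in \cite{choi2022random1} to supply transversality between the walks.

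First, I apply Theorem~\ref{thm:expBd} to each walk $(Z_n^{(i)})_{n \ge 0}$ for $i = 1, \ldots, k$. Fixing $0 < L < \lambda(\mu)$, this produces $K_0 > 0$ such that each $Z_n^{(i)}$ is contracting with $\tau(Z_n^{(i)}) \ge Ln$ with probability at least $1 - K_0 e^{-n/K_0}$. Since $k$ is fixed and the walks are independent, a union bound yields that all $k$ elements are simultaneously contracting with long translation length outside an exponentially small event.

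Next, I refine the pivotal-time construction underlying Theorem~\ref{thm:expBd} into a joint statement. The construction places a long sequence of Schottky pivots along each walk, encoding a contracting quasi-axis for $Z_n^{(i)}$; running these pivots through a fixed finite Schottky set of contracting isometries in $G$ (which exists under Convention~\ref{conv:main}) arranges that, with exponential probability, the attracting and repelling fixed points of $Z_n^{(i)}$ and $Z_n^{(j)}$ project to distinct directions from $o$ whenever $i \neq j$. Independence of the $k$ walks keeps the union bound cheap. Conditioning on this joint event, a standard ping-pong lemma for contracting isometries shows that $H := \langle Z_n^{(1)}, \ldots, Z_n^{(k)} \rangle$ is free of rank $k$ and that every reduced word $w$ of word length $m$ in the generators $Z_n^{(i)}$ satisfies $d(o, wo) \ge c \, m \, n - C$ for constants $c > 0$ and $C$. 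Together with the trivial upper bound $d(o, wo) \le m \cdot \max_i d(o, Z_n^{(i)} o)$, this is a quasi-isometric embedding of $H$ (with word metric in the generators $Z_n^{(i)}$) into $X$ via the orbit map. A quasi-convex subset containing $Ho$ is then produced by taking a uniform neighborhood of the union of the geodesic segments appearing in the Schottky ping-pong between orbit points, which stays close to $Ho$ by the contracting property of the axes.

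The main obstacle is orchestrating the joint pivotal construction so that transversality of the $k$ axes holds under a single exponential estimate. The single-walk analysis of \cite{choi2022random1} furnishes the main probabilistic input for genericity of contracting elements along one walk; the new ingredient is tracking the joint pivot configuration across $k$ walks and showing that their attracting and repelling fixed points land in pairwise disjoint sectors from $o$ with exponentially small failure probability. Once this joint transversality is in place, the Schottky-style QI embedding and quasi-convex hull follow from the contracting property in a standard way.
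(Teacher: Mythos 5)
Your overall strategy coincides with the paper's (Subsection \ref{subsection:pivotIndep}): reduce each walk to the pivotal-time model and use the freedom to resample at pivotal times to force the axes of the $k$ group elements into general position, then run a Schottky/alignment argument to get freeness, the linear lower bound on $d(o,wo)$, and quasi-convexity of the orbit. However, the step you yourself flag as ``the new ingredient'' --- the joint transversality across the $k$ walks with exponentially small failure probability --- is exactly where all of the work lies, and your proposal asserts it rather than proves it. Concretely, what is missing is: (a) a precise formulation of the pairwise repulsion conditions at a pivotal index, which in the paper take the form of the sets $\tilde{S}_{k}^{(s,t),front}$, $\tilde{S}_{k}^{(s,t),back}$, $\tilde{S}_{k}^{(s,t),\rightarrow}$, $\tilde{S}_{k}^{(s,t),\leftarrow}$ of Subsection \ref{subsection:pivotIndep}, expressed as $K_{0}$-alignment of a Schottky axis against a point determined by the \emph{other} choices (Lemma \ref{lem:multiRepulse}); (b) the measurability/dependence bookkeeping showing that the condition at the $l$-th pivotal time depends only on the choices at times $1,\dots,l-1$ and $n-l+2,\dots,n$ across both walks, so that the conditional failure probabilities can be chained from the outside in (the first assertion of Lemma \ref{lem:multiPivotProb}); and (c) the counting step, via property (4) of Definition \ref{dfn:Schottky} together with $\#\tilde{S}(v)\ge(\#S)^{2}-2\#S$, showing each condition is violated by at most $O(\#S)$ of the roughly $(\#S)^{2}$ admissible pairs, hence the joint condition fails at every one of $\asymp n$ pivotal times only with probability $(C/\#S)^{\epsilon n}$. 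Without (a)--(c) the ``exponentially small failure probability'' has no source, since independence of the walks alone gives nothing about the relative position of their axes.

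A secondary point: your first step, invoking Theorem \ref{thm:expBd} for each walk separately, is not needed and does not feed into the rest of the argument. Knowing each $Z_{n}^{(i)}$ is individually contracting with $\tau(Z_{n}^{(i)})\ge Ln$ gives no control on the configuration of the $k$ axes; in the paper both the contracting property and the quasi-isometric embedding fall out of the single joint alignment statement (Lemma \ref{lem:multiRepulse}), whose conclusion is verified word by word: for every reduced word $a_{1}\cdots a_{m}$ in the generators one exhibits a $D_{0}$-semi-aligned chain of Schottky axes from $o$ to $a_{1}\cdots a_{m}o$ and applies Lemma \ref{lem:BGIPWitness} to get both $d(o,a_{1}\cdots a_{m}o)\ge mK_{1}$ and the $K_{2}$-quasi-convexity of the orbit. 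Your appeal to ``attracting and repelling fixed points'' and ``disjoint sectors'' also presupposes boundary structure the paper deliberately avoids; the argument should be phrased purely in terms of alignment of axes.
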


Thanks to concrete control of the decay rate, we can deduce the analogous conclusion for counting problems. 

\begin{theorem}\label{thm:qiCount}
Let $G$ be a finitely generated group acting on a metric space $X$ with at least two independent contracting elements. Then for each $k > 0$, there exists a finite generating set $S$ of $G$ such that \[
\frac{\#\left\{(g_{1}, \ldots, g_{k}) \in \big(B_{S}(n) \big)^{k} :\begin{array}{c}  \langle g_{1}, \ldots, g_{k} \rangle \,\, \textrm{is q.i. embedded into} \\ \textrm{a quasi-convex subset of $X$} \end{array} \right\} } {\big(\# B_{S}(n)\big)^{k}}
\]
converges to 1 exponentially fast.
\end{theorem}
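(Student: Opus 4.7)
The plan is to deduce the counting statement from the random walk result (Theorem \ref{thm:qi}) by choosing the generating set $S$ so that the uniform measure on $S$ yields a random walk whose $n$-step distribution can be compared with the counting measure on $B_S(n)^k$.

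First I would construct $S$ as follows. Fix any finite symmetric generating set $S_0$ of $G$ and two independent contracting elements $h_1, h_2 \in G$. For a large integer $N$ to be chosen later, set $S = S_0 \cup \{h_1^{\pm N}, h_2^{\pm N}\} \cup \{e\}$. This set is finite, symmetric, contains the identity, generates $G$, and, since it contains two independent contracting elements, the uniform measure $\mu$ on $S$ is non-elementary.

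Applying Theorem \ref{thm:qi} to $\mu$ gives a constant $K > 0$ such that $k$ independent $\mu$-walks produce a bad $k$-tuple (one failing to generate a q.i.-embedded subgroup of a quasi-convex subset) with probability at most $K e^{-n/K}$. Because $\mu$ is uniform on $S$, one has $\mu^{\ast n}(g) = p_n(g)/|S|^n$, where $p_n(g) = \#\{(s_1, \ldots, s_n) \in S^n : s_1 \cdots s_n = g\}$. Including $e$ in $S$ ensures $p_n(g) \geq 1$ for every $g \in B_S(n)$, since a geodesic $S$-word of length $m \leq n$ can be padded by $n-m$ copies of the identity. It follows that
\[
\#\{\text{bad tuples in } B_S(n)^k\} \;\leq\; \sum_{\text{bad } (g_1, \ldots, g_k)} \prod_{j=1}^k p_n(g_j) \;\leq\; |S|^{kn} \cdot K e^{-n/K}.
\]

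The remaining task is to convert this into exponential decay of the ratio $\#\{\text{bad}\}/\#B_S(n)^k$, which requires the growth comparison $|B_S(n)|^{1/n} \geq |S| \cdot e^{-1/(2kK)}$ for $n$ large. This is the main obstacle. I would address it by choosing $N$ so large that a ping-pong argument makes $\langle h_1^N, h_2^N \rangle$ freely embedded in $G$ and its generators dominate $S$; this pushes the exponential growth rate of $(G, S)$ close to $\log |S|$, supplying the required bound. Alternatively, one can sharpen the path-counting step using the stronger estimate $p_n(g) \geq \binom{n}{|g|_S}$ together with large deviation bounds for $|Z_n|_S$ under $\mu^{\ast n}$, which reweights $\mu^{\ast n}$ against the typical $S$-length of the walk and bypasses the need to make $|S|$ and the growth rate of $G$ comparable. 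I expect this balance between path-counting multiplicity and ball growth to be the technically most delicate part of the argument.
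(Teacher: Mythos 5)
Your overall strategy --- realize the counting measure on $B_{S}(n)^{k}$ via the uniform random walk on a generating set containing the identity, use $p_{n}(g)\ge 1$ to bound the number of bad tuples by $|S|^{kn}\cdot Ke^{-n/K}$, and then fight this against a lower bound on $\#B_{S}(n)$ --- is exactly the paper's strategy, and you have correctly located the crux. But your resolution of the crux has a genuine gap, in fact two. First, adding only $h_{1}^{\pm N}, h_{2}^{\pm N}$ to $S_{0}$ cannot push the growth rate of $(G,S)$ close to $\log|S|$: the free subgroup $\langle h_{1}^{N}, h_{2}^{N}\rangle$ contributes only about $3^{n}$ elements to $B_{S}(n)$, while $|S|^{n}$ is of order $(\#S_{0}+5)^{n}$, so the gap between $|B_{S}(n)|^{1/n}$ and $|S|$ remains bounded away from $1$ no matter how large $N$ is. Second, there is a circularity: the constant $K$ in Theorem \ref{thm:qi} depends on the measure $\mu$, hence on $S$, so the target inequality $|B_{S}(n)|^{1/n}\ge |S|\,e^{-1/(2kK)}$ moves every time you modify $S$ to try to satisfy it. Your fallback via $p_{n}(g)\ge\binom{n}{|g|_{S}}$ and large deviations for $|Z_{n}|_{S}$ runs into the same uncontrolled constants.

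The paper resolves both problems simultaneously by making everything quantitative. It enlarges $S'$ not by two elements but by the $2N_{0}^{4}$ products $\Phi(S^{4})\cup\Phi(\check{S}^{4})$ of a Schottky set of cardinality $N_{0}$, chosen so large that these new generators form a proportion $p>1/2$ of $S''$. Lemma \ref{lem:concatSchottky} then shows these products satisfy a free-product-like cancellation, yielding the lower bound $\#B_{S''}(n)\ge 2^{n}(N_{0}-1)^{4n}\ge (1/2)^{n}(\#S'')^{n}$ --- the loss is only a controlled factor $(1/2)^{n}$, not an unknown one. On the other side, instead of quoting Theorem \ref{thm:qi} with its black-box $K$, the paper reruns the pivoting argument in an effective form (Section \ref{section:effective}, Corollary \ref{cor:countTr} and the $k$-tuple analogue via Lemmas \ref{lem:multiRepulse} and \ref{lem:multiPivotProb}) to get failure probability at most $3(1-q)^{n}$ with $q>1/2$ explicitly tied to $p$ and $N_{0}$. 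The final ratio is then at most $3(2(1-q))^{n}$, which decays because $q>1/2$. Without this explicit coupling between the decay rate of the probabilistic estimate and the entropy loss in the ball count, the comparison you propose cannot be closed.
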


Theorem \ref{thm:qiCount} has been previously observed for a vast number of group actions by Han and Yang. First, Yang describes the genericity of contracting elements for counting problem in \cite{yang2020genericity}, which corresponds to the case of $k=1$ in Theorem \ref{thm:qiCount}. Yang considered groups admitting a statistically convex-cocompact (SCC) actions on proper spaces, which include relatively hyperbolic group, groups with non-trivial Floyd boundary, non-splitting RAAGs and RACGs, small cancellation groups, mapping class groups acting on Teichm{\"u}ller spaces and many more. This was later generalized to arbitrary $k$ by Han and Yang in \cite{han2021generic}. 

Meanwhile, mapping class groups are not known to possess a contracting element with respect to the actions on their Cayley graphs. This necessitates a different approach for the counting problem in the Cayley graph. Theorem \ref{thm:qiCount} accomplishes this purpose by not requiring the action of $G$ on $X$ be proper or SCC, while deducing a result for the counting problem in the Cayley graph of $G$. 
For Gromov hyperbolic spaces and Teichm{\"u}ller space, Theorem \ref{thm:qiCount} for $k=1$ has been observed in \cite{choi2021generic} as an affirmative answer to a version of Farb's conjecture in \cite{farb2006problems}. 

\subsection{Structure of the paper}\label{subsection:structure}

In Section \ref{section:prelim}, we recall the basic notions and facts about contracting isometries, alignment of paths and Schottky sets that were discussed in \cite{choi2022random1}.

All the theorems of this paper eventually follow from Gou{\"e}zel's pivoting technique \cite{gouezel2022exponential}. Among them, Theorem \ref{thm:expBd}, \ref{thm:discrepancy} and the first half of Theorem \ref{thm:CLT} (CLT) use pivoting technique implicitly via referring to the results of \cite{choi2022random1}. This part is explained in Section \ref{section:firstMethod}, which only assumes the content of Section \ref{section:prelim} and does not require any knowledge about pivotal times.

The other theorems are deduced using pivoting technique that we explain in Section \ref{section:secondMethod}. In Subsection \ref{subsection:pivotDiscrete}, we recall the pivotal time construction for a discrete model. In Subsection \ref{subsection:pivotReduce}, we consider two lemmata to reduce a random walk into the discrete model. Given this preparation, we relate pivotal times with translation length and genericity of contracting isometries in Subsection \ref{subsection:pivot2}. In Subsection \ref{subsection:CLTConverse}, we combine these ingredients and deduce the converse of CLT. In Subsection \ref{subsection:pivotIndep}, we generalize the discussion in Subsection \ref{subsection:pivotReduce} to discuss the quasi-isometric embedding of $k$ independent random walks.

It remains to discuss counting problem. Our strategy is to compare the exponential bound for simple random walks on $G$ and a counting estimate on $G$. For this, we establish a quantitative version of Subsection \ref{subsection:pivotReduce} in Section \ref{section:effective}. We prove Theorem \ref{thm:qiCount} using this in Section \ref{section:counting}.

\subsection*{Acknowledgments}
The author thanks Hyungryul Baik, Talia Fern{\'o}s, Ilya Gekhtman, Thomas Haettel,  Joseph Maher, Hidetoshi Masai, Catherine Pfaff, Yulan Qing, Kasra Rafi,  Samuel Taylor and Giulio Tiozzo for helpful discussions. The author is also grateful to the American Institute of Mathematics and the organizers and the participants of the workshop ``Random walks beyond hyperbolic groups'' in April 2022 for helpful and inspiring discussions. Finally, the author thanks the anonymous referee for providing valuable comments that enabled huge improvement of the paper.

The author is supported by Samsung Science \& Technology Foundation (SSTF-BA1702-01 and SSTF-BA1301-51) and by a KIAS Individual Grant (SG091901) via the June E Huh Center for Mathematical Challenges at KIAS. This work constitutes part of the author’s PhD thesis. The revision was conducted while the author was participating in the program ``Randomness and Geometry" at the Fields Institute under the support of the Marsden Postdoctoral Fellowship.

\section{Preliminaries}\label{section:prelim}

In this section, we summarize the language and the setting of \cite{choi2022random1}. Throughout the paper, $X$ is a geodesic metric space with basepoint $o$ and $G$ is a countable isometry group of $X$.

\subsection{Contracting isometries and alignment}\label{subsection:contracting}

\begin{definition}[contracting sets]\label{dfn:contracting}
For a subset $A \subseteq X$ and $\epsilon > 0$, we define the \emph{closest point projection} of $x \in X$ to $A$ by \[
\pi_{A}(x) := \big\{a \in A : d_{X}(x, a)= d_{X}(x, A) \big\}.
\]
$A$ is said to be \emph{$K$-contracting} if: \begin{enumerate}
\item $\pi_{A}(z) \neq \emptyset$ for all $z\in X$ and
\item for all $x, y \in X$ such that $d_{X}(x, y) \le d_{X}(x, A)$ we have \[
\diam_{X}\big(\pi_{A}(x) \cup \pi_{A}(y)\big) \le K.
\]
\end{enumerate}

A $K$-contracting $K$-quasigeodesic is called a \emph{$K$-contracting axis}. An isometry $g \in G$ is said to be \emph{$K$-contracting} if its orbit $\{g^{n} o\}_{n \in \Z}$ is a $K$-contracting axis.
\end{definition}

\begin{definition}[Translation length]\label{dfn:trLength}
For $g \in G$, the \emph{(asymptotic) translation length} of $g$ is defined by \[
\tau(g) := \liminf_{n \rightarrow \infty} \frac{1}{n} d(o, g^{n} o).
\]
\end{definition}

Given an isometry $g \in G$, the orbit $\{g^{n} o\}_{n \in \Z}$ is a quasi-geodesic if and only if $g$ has strictly positive translation length. However, an isometry with positive translation length may not be contracting.

A \emph{path} is a map from either an interval or a set of consecutive integers to $X$. Given a path $\gamma : [a, b] \rightarrow X$, we define its \emph{reversal} $\bar{\gamma} : [a, b] \rightarrow X$ by the map $\bar{\gamma}(t) := \gamma(a+b - t)$.

A subset $A \subseteq X$ is said to be \emph{$K$-quasi-convex} if any geodesic $[x, y]$ connecting two points $x, y \in A$ is contained in the $K$-neighborhood of $A$. Two paths $\kappa$ and $\eta$ on $X$ are said to be \emph{$K$-fellow traveling} if their beginning points and ending points are pairwise $K$-near and $d_{Hauss}(\kappa, \eta) < K$.

Let us now recall some alignment lemmata established in \cite{choi2022random1}.

\begin{definition}[{\cite[Definition 3.5]{choi2022random1}}]\label{dfn:alignment}
For $i=1, \ldots, n$, let $\gamma_{i}$ be a path on $X$ whose beginning and ending points are $x_{i}$ and $y_{i}$, respectively. We say that $(\gamma_{1}, \ldots, \gamma_{n})$ is $C$-aligned if \[
\diam_{X}\big(y_{i} \cup \pi_{\gamma_{i}}(\gamma_{i+1})\big) < C, \quad \diam_{X}\big(x_{i+1} \cup \pi_{\gamma_{i+1}} (\gamma_{i}) \big) < C
\]
hold for $i = 1, \ldots, n-1$.
\end{definition}

Here, we regard points as degenerate paths. For example, for a point $x$ and a path $\gamma$, we say that $(x, \gamma)$ is $C$-aligned  if \[
\diam_{X}(\textrm{beginning point of $\gamma$} \cup \pi_{\gamma}(x)) < C
\] holds. The following observation is immediate: 

\begin{obs}\label{obs:align}
Let $g$ be an isometry of $X$. Let $n$ be a positive integer and let $k$ be an integer in $\{1, \ldots, n\}$. Let $K > 0$ and let $\gamma_{1}, \ldots, \gamma_{n}$ be paths on $X$. \begin{enumerate}
\item If $(\gamma_{1}, \ldots, \gamma_{k})$ and $(\gamma_{k}, \ldots, \gamma_{n})$ are $K$-aligned, then their concatenation $(\gamma_{1}, \ldots, \gamma_{n})$ is also $K$-aligned.
\item If $(\gamma_{1}, \ldots, \gamma_{n})$ is $K$-aligned, then $(\bar{\gamma}_{n}, \ldots, \bar{\gamma}_{1})$ is also $K$-aligned.
\item If $(\gamma_{1}, \ldots, \gamma_{n})$ is $K$-aligned, then $(g\gamma_{1}, \ldots, g\gamma_{n})$ is also $K$-aligned.
\end{enumerate}
\end{obs}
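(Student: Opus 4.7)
The plan is simply to unfold Definition \ref{dfn:alignment} in each case; all three assertions reduce to index bookkeeping together with the fact that isometries preserve diameters and commute with closest-point projection, which is why the authors call the observation immediate.

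For (1), $K$-alignment of $(\gamma_{1}, \ldots, \gamma_{n})$ is a pair of diameter inequalities for each transition $i \to i+1$ with $i = 1, \ldots, n-1$. I would partition this range at $k$: the transitions with $i \le k-1$ are provided by $K$-alignment of $(\gamma_{1}, \ldots, \gamma_{k})$, and the transitions with $i \ge k$ are provided by $K$-alignment of $(\gamma_{k}, \ldots, \gamma_{n})$. Together they exhaust all required inequalities, and the two degenerate cases $k = 1$ and $k = n$ are handled vacuously by one side of the split.

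For (2), the key point is that reversing a path leaves its image in $X$ unchanged, so $\pi_{\bar{\gamma}_{i}} = \pi_{\gamma_{i}}$; meanwhile the beginning and ending points of $\bar{\gamma}_{i}$ are $y_{i}$ and $x_{i}$, respectively. Writing $(\delta_{1}, \ldots, \delta_{n}) := (\bar{\gamma}_{n}, \ldots, \bar{\gamma}_{1})$ and setting $i = n - j$, the two diameter inequalities required at transition $j \to j+1$ in the reversed sequence match, after this reindexing, the two inequalities already provided at transition $i \to i+1$ in the original sequence, with the roles of the two inequalities swapped.

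For (3), an isometry $g$ satisfies $g \cdot \pi_{A}(x) = \pi_{gA}(gx)$ and preserves diameters of subsets of $X$, so applying $g$ to each of the inequalities $\diam_{X}(y_{i} \cup \pi_{\gamma_{i}}(\gamma_{i+1})) < K$ and $\diam_{X}(x_{i+1} \cup \pi_{\gamma_{i+1}}(\gamma_{i})) < K$ yields exactly the alignment inequalities for $(g\gamma_{1}, \ldots, g\gamma_{n})$. The only "obstacle" here is notational — one has to keep careful track of which projection, endpoint and transition index enters each inequality — but there is no substantive mathematical difficulty beyond the definitions themselves.
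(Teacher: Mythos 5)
Your proof is correct and is exactly the direct unfolding of Definition \ref{dfn:alignment} that the paper has in mind when it calls the observation immediate (the paper gives no written proof). The three key points you isolate — that alignment is a condition on consecutive pairs only, that reversal preserves the image of a path while swapping endpoints, and that isometries commute with closest-point projection and preserve diameters — are precisely what is needed.
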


\begin{lem}[{\cite[Lemma 3.7]{choi2022random1}}]\label{lem:1segment}
For each $C>0$ and $K>1$, there exists $D= D(K, C)>K, C$ that satisfies the following.

Let $\gamma$ and $\gamma'$ be $K$-contracting axes whose ending points are $y$ and $y'$, respectively. If $(y, \gamma')$ and $(\gamma, y')$ are $C$-aligned, then $(\gamma, \gamma')$ is $D$-aligned.
\end{lem}

\begin{lem}[{\cite[Proposition 3.11]{choi2022random1}}]\label{lem:BGIPWitness}
For each $D>0$ and $K>1$, there exist $E = E(K, D)>K, D$ and $L = L(K,D)>K, D$ that satisfy the following. 

Let $x$ and $y$ be points in $X$ and let $\gamma_{1}, \ldots, \gamma_{n}$ be $K$-contracting axes whose domains are longer than $L$ and such that $(x, \gamma_{1}, \ldots, \gamma_{n}, y)$ is $D$-aligned. Then the geodesic $[x, y]$ has subsegments $\eta_{1}, \ldots, \eta_{n}$, in order from left to right, that are longer than $100E$ and such that $\eta_{i}$ and $\gamma_{i}$ are $0.1E$-fellow traveling for each $i$.  In particular, $(x, \gamma_{i}, y)$ are $E$-aligned for each $i$.
\end{lem}

\begin{lem}[{\cite[Proposition 2.9]{yang2019statistically}}]\label{lem:BGIPConcat}
For each $D, M>0$ and $K>1$, there exist $E = E(K, D, M)>D$ and $L = L(K,D)>D$ that satisfies the following.

Let $\gamma_{1}, \ldots, \gamma_{n}$ be $K$-contracting axes whose domains are  longer than $L$. Suppose that $(\gamma_{1}, \ldots, \gamma_{n})$ is $D$-aligned and $d(\gamma_{i}, \gamma_{i+1}) < M$ for each $i$. Then the concatenation $\gamma_{1} \cup \ldots \cup \gamma_{n}$ of $\gamma_{1}, \ldots, \gamma_{n}$ is an $E$-contracting axis.
\end{lem}

For the proofs of these lemmata, refer to Subsection 3.1 and 3.2 of \cite{choi2022random1}.

\subsection{Random walks} \label{subsection:RW}

Let $\mu$ be a probability measure on $G$. We denote by $\check{\mu}$ the \emph{reflected version of $\mu$}, which is defined by $\check{\mu}(g) := \mu(g^{-1})$. The \emph{random walk} generated by $\mu$ is the Markov chain on $G$ with the transition probability $p(g, h) := \mu(g^{-1} h)$.

Consider the \emph{step space} $(G^{\Z}, \mu^{\Z})$, the product space of $(G, \mu)$. Each element $(g_{n})_{n \in \Z} \in G^{\Z}$  is called a \emph{step path}, and there is a corresponding \emph{(bi-infinite) sample path} $(Z_{n})_{n \in \Z}$ under the correspondence \[
Z_{n} = \left\{ \begin{array}{cc} g_{1} \cdots g_{n} & n > 0 \\ id & n=0 \\ g_{0}^{-1} \cdots g_{n+1}^{-1} & n < 0. \end{array}\right.
\]
We also introduce the notation $\check{g}_{n} = g_{-n+1}^{-1}$ and $\check{Z}_{n} = Z_{-n}$. Note that we have an isomorphism $(G^{\Z}, \mu^{\Z}) \rightarrow  (G^{\Z_{>0}}, \check{\mu}^{\Z_{>0}})\times (G^{\Z_{>0}}, \mu^{\Z_{>0}}) $ by $(g_{n})_{n \in \Z} \mapsto ((\check{g}_{n})_{n > 0}, (g_{n})_{n >0})$. We will frequently use the latter parametrization: for each $(\w, \check{\w}) \in (G^{\Z_{>0}}, \check{\mu}^{\Z_{>0}}) \times (G^{\Z_{>0}}, \mu^{\Z_{>0}})$ we have \[\begin{aligned}
g_{n}(\check{\w}, \w) := g_{n}(\w), \quad Z_{n}(\check{\w}, \w) := Z_{n}(\w), \\
\check{g}_{n}(\check{\w}, \w) := g_{n}(\check{\w}), \quad\check{Z}_{n}(\check{\w}, \w) := Z_{n}(\check{\w})
\end{aligned}
\]

We define the \emph{support} of $\mu$, denoted by $\supp \mu$, as the set of elements in $G$ that are assigned nonzero values of $\mu$. We denote by $\mu^{N}$ the product measure of $N$ copies of $\mu$, and by $\mu^{\ast N}$ the $N$-th convolution measure of $\mu$.

Now suppose that $G$ is acting on the metric space $(X, d)$. A probability measure $\mu$ on $G$ is said to be \emph{non-elementary} if the semigroup generated by the support of $\mu$ contains two independent contracting isometries $g, h$ of $X$.

\subsection{Schottky set} \label{subsection:Schottky}

We now introduce the notion of Schottky set. Given a sequence $\alpha  = (a_{1}, \ldots, a_{n}) \in G^{n}$, we employ the following notations: \[\begin{aligned}
\Pi(\alpha) &:= a_{1} a_{2} \cdots a_{n}, \\
\Gamma(\alpha) &:= \big(o, a_{1} o, a_{1} a_{2} o, \ldots, \Pi(\alpha) o\big).
\end{aligned}
\]
The following definition is an adaptation of Gou{\"e}zel's notion of Schottky sets on hyperbolic groups in \cite{gouezel2022exponential}.

\begin{definition}[cf. {\cite[Definition 3.11]{gouezel2022exponential}}, {\cite[Definition 3.15]{choi2022random1}}]\label{dfn:Schottky}
Let $K_{0} > 0$ and define: \begin{itemize}
\item $D_{0}= D(K_{0}, K_{0})$ be as in Lemma \ref{lem:1segment}, 
\item $D_{1}= E(K_{0}, D_{0})$, $L = L(K_{0}, D_{0})$ be as in Lemma \ref{lem:BGIPWitness},
\item $E_{0}= E(K_{0}, D_{0})$, $L' = L(K_{0}, D_{0})$ be as in Lemma \ref{lem:BGIPWitness},
\item $L'' = L(K_{0}, D_{1})$ be as in Lemma \ref{lem:BGIPConcat}.
\end{itemize}
We say that a set of sequences $S \subseteq G^{n}$ is a \emph{fairly long $K_{0}$-Schottky set} if: \begin{enumerate}
\item $n > \max\{L, L', L''\}$;
\item $\Gamma(\alpha)$ is $K_{0}$-contracting axis for all $\alpha \in S$;
\item $d(o, \Pi(\alpha)o) \ge 10E_{0}$ for all $\alpha \in S$;
\item for each $x \in X$ we have \[
\#\Big\{\alpha \in S : \textrm{$\big(x, \Gamma(\alpha)\big)$ and $\big(\Gamma(\alpha), \Pi(s) x\big)$ are $K_{0}$-aligned}\Big\} \ge \# S - 1;
\]
\item for each $\alpha \in S$, $\big(\Gamma(\alpha), \Pi(\alpha)\Gamma(\alpha)\big)$ is $K_{0}$-aligned.
\end{enumerate}
When the Schottky parameter $K_{0}$ is understood, the constants $D_{0}, D_{1}, E_{0}$ \emph{always} denote the ones defined above. Further, we sometimes omit the Schottky parameter and just say that $S$ is a fairly long Schottky set.

Once a fairly long Schottky set $S$ is understood, its element $\alpha$ is called a \emph{Schottky sequence} and the translates of $\Gamma(\alpha)$ are called \emph{Schottky axes}. When a probability measure $\mu$ on $G$ is given in addition such that $S \subseteq (\supp \mu)^{n}$, we say that $S$ is a fairly long Schottky set for $\mu$.
\end{definition}

Some facts regarding Schottky sets are in order.

\begin{lem}[{\cite[Proposition 3.18]{choi2022random1}}]\label{lem:Schottky}
Let $\mu$ be a non-elementary probability measure on $G$. Then for each $N>0$ there exists a fairly long Schottky set for $\mu$ with cardinality $N$.
\end{lem}

\begin{definition}\label{dfn:semiAlign}
Let $S$ be a fairly long Schottky set and let $K>0$. We say that a sequence of Schottky axes is \emph{$K$-semi-aligned} if it is a subsequence of a $K$-aligned sequence of Schottky axes.

More precisely, for Schottky axes $\gamma_{1}, \ldots, \gamma_{n}$, we say that $(\gamma_{1}, \ldots, \gamma_{n})$ is \emph{$K$-semi-aligned} if there exist Schottky axes $\eta_{1}, \ldots, \eta_{m}$ such that $(\eta_{1}, \ldots, \eta_{m})$ is $K$-aligned and there exists a subsequence $\{i(1) < \ldots < i(n)\} \subseteq \{1, \ldots, m\}$ such that $\gamma_{l} = \eta_{i(l)}$ for $l=1, \ldots, n$.

Similarly, for points $x, y \in X$ and Schottky axes $\gamma_{1}, \ldots, \gamma_{n}$, we say that $(x, \gamma_{1}, \ldots, \gamma_{n}, y)$ is \emph{$K$-semi-aligned} if it is a subsequence of a $K$-aligned sequence $(x, \eta_{1}, \ldots, \eta_{m}, y)$ for some Schottky axes $\eta_{1}, \ldots, \eta_{n}$.
\end{definition}

Lemma \ref{lem:BGIPWitness} implies the following corollary.

\begin{cor}\label{cor:semiAlign}
Let $S$ be a fairly long $K_{0}$-Schottky set, with constants $D_{0}, D_{1}$ as in Definition \ref{dfn:Schottky}. Let $x, y \in X$ and $\gamma_{1}, \ldots, \gamma_{N}$ be Schottky axes. Then $(x, \gamma_{1}, \ldots, \gamma_{N}, y)$  is $D_{1}$-aligned whenever it is $D_{0}$-semi-aligned.
\end{cor}

\section{The first method: deviation inequalities}\label{section:firstMethod}

Our first approach to the limit laws rely on the pivoting technique implicitly via deviation inequalities. Throughout, we fix $(X, G, o)$ as in Convention \ref{conv:main}, fix a non-elementary probability measure $\mu$ on $G$ and fix a fairly long Schottky set $S \subseteq (\supp \mu)^{M_{0}}$ for $\mu$.

Employing the notations defined in Subsection \ref{subsection:RW}, let $(\check{\w}, \w) \in (G^{\Z_{>0}}, \check{\mu}^{\Z_{>0}}) \times (G^{\Z_{>0}}, \mu^{\Z_{>0}})$. We briefly recall the random variable $\Devi=\Devi(\check{\w}, \w)$ defined in \cite[Section 5]{choi2022random1}. For each $k\ge M_{0}$, we ask whether there exists $M_{0} \le i \le k$ such that: 
\begin{enumerate}
\item $\gamma := (Z_{i-M_{0}} o, Z_{i-M_{0}+1} o, \ldots, Z_{i} o)$ is a Schottky axis, and
\item $(\check{Z}_{m} o, \gamma, Z_{n}o )$ is $D_{0}$-semi-aligned for all $n \ge k$ and $m \ge 0$.
\end{enumerate}
Note that, by Corollary \ref{cor:semiAlign}, Item (2) forces that $(\check{Z}_{m} o, \gamma, Z_{n}o )$ is $D_{1}$-aligned for all $n \ge k$ and $m \ge 0$. We define $\Devi(\check{\w}, \w)$ as the minimal index $k$ that possesses such an auxiliary index $i \le k$.

Similarly, we defined $\check{\Devi} = \check{\Devi}(\check{\w}, \w)$ as the minimal index $k \ge M_{0}$ such that there exists $M_{0} \le i \le k$ for which: \begin{enumerate}
\item $\gamma :=  (\check{Z}_{i} o, \check{Z}_{i+1} o,  \ldots, \check{Z}_{i-M_{0}} o)$ is a Schottky axis, and
\item $(\check{Z}_{n} o, \gamma, Z_{m} o)$ is $D_{0}$-semi-aligned for all $n \ge k$ and $m \ge 0$.
\end{enumerate}

In \cite{choi2022random1} we proved the following results:

\begin{lem}[{\cite[Lemma 4.9]{choi2022random1}}]\label{lem:Devi}
Let $(X, G, o)$ be as in Convention \ref{conv:main} and let $\mu$ be a non-elementary probability measure on $G$. Then there exist $\kappa, K>0$ such that the following estimate holds for all $k$ and all choices of $g_{k+1}, \check{g}_{1}, \ldots, \check{g}_{k+1} \in G$:
\[\begin{aligned}
\Prob_{\check{\mu}^{\Z_{>0}} \times \mu^{\Z_{>0}}}\left(\Devi(\wVar, \w) \ge k \, \Big|\, g_{k+1}, \check{g}_{1}, \ldots, \check{g}_{k+1}\right) &\le K e^{-\kappa k},
\end{aligned}\]
and for all $k$ and for all choices of $g_{k+1}, \check{g}_{1}, \ldots, \check{g}_{k+1} \in G$:
\[\begin{aligned}
\Prob_{\check{\mu}^{\Z_{>0}} \times \mu^{\Z_{>0}}}\left(\check{\Devi}(\wVar, \w) \ge k \, \Big|\, \check{g}_{k+1}, g_{1}, \ldots, g_{k+1}\right) &\le K e^{-\kappa k}.
\end{aligned}\]
\end{lem}

By integrating for $g_{k+1}$ and $\check{g}_{1}, \ldots, \check{g}_{k+1}$ over $G$, we deduce that \[\begin{aligned}
&\Prob_{\check{\mu}^{\Z_{>0}} \times \mu^{\Z_{>0}}}\left(\Devi(\wVar, \w) \ge k \right) \\
&= \sum_{\substack{\check{g}_{1}, \ldots, \check{g}_{k+1}, \\ g_{k+1} \in G}} \Prob_{\check{\mu}^{\Z_{>0}} \times \mu^{\Z_{>0}}}\left(\Devi(\wVar, \w) \ge k \, \Big|\, g_{k+1}, \check{g}_{1}, \ldots, \check{g}_{k+1}\right) \cdot \mu(g_{k+1}) \check{\mu}(\check{g}_{1}) \cdots \check{\mu}(\check{g}_{k+1}) \le K e^{-\kappa k}.
\end{aligned}
\]
Similarly, we have $\Prob_{\check{\mu}^{\Z_{>0}} \times \mu^{\Z_{>0}}}\left(\check{\Devi}(\wVar, \w) \ge k \right) \le K e^{-\kappa k}$ for each $k$.

\begin{lem}[{\cite[Corollary 4.11]{choi2022random1}}]\label{lem:minDevi}
Let $p>0$ and suppose that $\mu$ has finite $p$-th moment. Then there exists $K>0$ such that \[
\E_{\check{\mu}^{\Z_{>0}} \times \mu^{\Z_{>0}}}\left[ \min \{ d(o, Z_{\Devi} o), d(o, \check{Z}_{\check{\Devi}} o)\}^{2p} \right] < K.
\]
\end{lem}

Let $F_{p}$ be the distribution of $\min\{d(o, Z_{\upsilon} o), d(o, \check{Z}_{\check{\upsilon}} o)\}^{2p}$, i.e., \[
F_{p}(u) = \Prob_{\check{\mu}^{\Z_{>0}} \times \mu^{\Z_{>0}}}\Big( \min \{ d(o, Z_{\Devi} o), d(o, \check{Z}_{\check{\Devi}} o)\}^{2p} \ge u\Big).
\]By Lemma \ref{lem:minDevi}, $\int_{0}^{\infty} F_{p}(u) du$ is finite whenever $\mu$ has finite $p$-th moment.

Using this, we can prove Theorem \ref{thm:discrepancy}.

\begin{proof}[Proof of Theorem \ref{thm:discrepancy}]
Recall that $\mu$ be a non-elementary probability measure on $G$. Let $\kappa, K >0$ be the constants for $\mu$ as in Lemma \ref{lem:Devi}.

Recall also our notations introduced in Subsection \ref{subsection:RW}: $(g_{i})_{i \in \Z}$'s are i.i.d. RVs distributed according to $\mu$ and $Z_{i} := g_{1} \cdots g_{i}$ for $i \ge 0$. Now for each $n>0$, we bring the $n$-step initial subpath $(g_{1}, \ldots, g_{n})$ of $(g_{i})_{i > 0}$ that  is distributed according to $\mu^{n}$, together with another RV $(h_{i})_{i \in \Z}$ distributed according to $\mu^{\Z}$ that is independent from $(g_{i})_{i \in \Z}$. We then define \[\begin{aligned}
&g_{i; 0} := \left\{ \begin{array}{cc} g_{i} & i = 1, \ldots, \lfloor n/2 \rfloor \\ h_{i} & i > \lfloor n/2  \rfloor, \end{array}\right. &\check{g}_{i; 0} := \left\{ \begin{array}{cc} g_{n - i + 1}^{-1} & i = 1, \ldots, n- \lfloor n/2 \rfloor \\ h_{-i}^{-1} & i >n-\lfloor n/2 \rfloor, \end{array}\right. \\
&g_{i; 1} := \left\{ \begin{array}{cc} g_{ \lfloor n/2 \rfloor +i} & i = 1, \ldots, n-\lfloor n/2 \rfloor \\ h_{i}  &i > n- \lfloor n/2  \rfloor, \end{array}\right.&\check{g}_{i; 1} := \left\{ \begin{array}{cc} g_{\lfloor n/2 \rfloor  - i + 1}^{-1} & i = 1, \ldots, \lfloor n/2 \rfloor \\ h_{-i}^{-1} & i > \lfloor n/2  \rfloor .\end{array}\right. 
\end{aligned}
\]

Then $ \big((\check{g}_{i; t})_{i > 0}, (g_{i; t})_{i>0} \big)$ is distributed according to $\check{\mu}^{\Z_{>0}} \times \mu^{\Z_{>0}}$ for $t=0, 1$. Using them, we similarly define other RVs such as  \[\begin{aligned}
Z_{i; t} &:= g_{1; t} \cdots g_{i; t}, &\check{Z}_{i; t} &:= \check{g}_{1; t} \cdots \check{g}_{i;t}, \\
\Devi_{t} &:= \Devi\big( (\check{g}_{i;t})_{i>0}, (g_{i; t})_{i>0}\big), &\check{\Devi}_{t} &:= \check{\Devi}\big(  (\check{g}_{i;t})_{i>0}, (g_{i; t})_{i>0}\big).
\end{aligned}
\]
Note that $\Devi_{t}$, $\check{\Devi}_{t}$ are RVs that depend on the choice of $n$. For this reason, we will also denote $\Devi_{0}$ by $v(n)$ and $\check{\Devi}_{0}$ by $v(n)$.

We now define \[
A_{n} := \Big\{ \max \{\Devi_{0}, \check{\Devi}_{0}, \Devi_{1}, \check{\Devi}_{1} \} \ge n/10\Big\}.
\]
Since $\big((\check{g}_{i; 0})_{i > 0}, (g_{i; 0})_{i>0} \big)$ and $\big((\check{g}_{i; 1})_{i > 0}, (g_{i; 1})_{i>0} \big)$ are both distributed according to $\check{\mu}^{\Z_{>0}} \times \mu^{\Z_{>0}}$, Lemma \ref{lem:Devi} implies \begin{equation}\label{eqn:exceptionalProbSmall}\begin{aligned}
\Prob(A_{n}) &\le2 \Prob_{\check{\mu}^{\Z_{>0}} \times \mu^{\Z_{>0}}} \big(\Devi(\check{\w}, \w) \ge n/10\big) +2 \Prob_{\check{\mu}^{\Z_{>0}} \times \mu^{\Z_{>0}}} \big(\check{\Devi}(\check{\w}, \w) \ge n/10\big)\\
&\le 4K e^{-\kappa n/10}.
\end{aligned}
\end{equation}
Furthermore, Lemma \ref{lem:minDevi} implies \[\begin{aligned}
\Prob\Big(\min \left\{d\left(o, Z_{\Devi_{t}; t} \,o\right), d \left(o, \check{Z}_{\check{\Devi}_{t}; t} \,o\right)\right\}^{2p} \ge u\Big) = F_{p}(u)
\end{aligned}
\]
for each $u \ge 0$ and $t=0, 1$. We aim to show:
\begin{claim}\label{claim:proofAClaim1} \begin{equation}\label{eqn:claimProof1}
[d(o, Z_{n} o) - \tau(Z_{n})] \le 2\min \left\{d\left(o, Z_{\Devi_{0}; 0} \,o\right), d \left(o, \check{Z}_{\check{\Devi}_{0}; 0} \,o\right)\right\}
\end{equation}
holds outside $A_{n}$. 
\end{claim}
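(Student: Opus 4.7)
My plan is to derive (\ref{eqn:claimProof1}) by establishing, outside $A_n$, the bound $\tau(Z_n) \ge d(o, Z_n o) - 2p$ with $p := d(o, Z_{\Devi_0;0}o)$; the symmetric bound in terms of $q := d(o, \check{Z}_{\check{\Devi}_0;0}o)$ follows from the analogous construction with the roles of beginning and end swapped, and taking the minimum gives the claim. The strategy is to assemble a $D_1$-aligned chain spanning the orbit from $o$ to $Z_n o$, anchored at a Schottky axis $\gamma_b$ near the start of the path, and to telescope the $Z_n$-iterates of this chain into a quasi-isometric axis whose translation step approximates $d(o, Z_n o)$ up to errors controlled by $p$.

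First I build the single-step chain. Outside $A_n$, each of $\Devi_0, \Devi_1, \check{\Devi}_0, \check{\Devi}_1$ is less than $n/10 \le \lfloor n/2 \rfloor$, so the $Z_{i;0}$-walk coincides with the original walk on the relevant range. The definition of $\Devi_0$ together with Corollary \ref{cor:semiAlign} produces a Schottky axis $\gamma_b = (Z_{i_b - M_0}o, \ldots, Z_{i_b}o)$ with $M_0 \le i_b \le \Devi_0$ and $(o, \gamma_b, Z_{\lfloor n/2\rfloor}o)$ $D_1$-aligned. In parallel, translating the output of $\Devi_1$ by $Z_{\lfloor n/2\rfloor}$ yields a Schottky axis $\gamma_m$ near position $\lfloor n/2\rfloor + i_m$ with $(Z_{\lfloor n/2\rfloor}o, \gamma_m, Z_n o)$ $D_1$-aligned. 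Observation \ref{obs:align}(1) concatenates these at $Z_{\lfloor n/2\rfloor}o$, and iterating by $Z_n^i$ via Observation \ref{obs:align}(3) produces the $D_1$-aligned chain
\[
(o, \gamma_b, Z_{\lfloor n/2\rfloor}o, \gamma_m, Z_n o, Z_n\gamma_b, \ldots, Z_n^{j-1}\gamma_m, Z_n^j o),
\]
which, after suppressing the intermediate points via Lemma \ref{lem:1segment}, becomes a $D_2$-aligned chain of $2j$ Schottky axes bracketed by $o$ and $Z_n^j o$.

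Next Lemma \ref{lem:BGIPWitness} applies to this reduced chain, producing $2j$ disjoint sub-segments of the geodesic $[o, Z_n^j o]$, in order, each of length at least $100 E_0$ and $0.1 E_0$-fellow-travelling the respective Schottky axes. Summing sub-segment lengths with the intermediate gaps lower-bounds $d(o, Z_n^j o)$; the per-iteration contribution is at least $d(Z_{i_b}o, Z_n Z_{i_b - M_0}o) - |\gamma_m|$, via the triangle inequality routed through $\gamma_m$. To convert this into a bound in $p$, I apply Lemma \ref{lem:BGIPWitness} once more, now to the triple $(o, \gamma_b, Z_{\Devi_0;0}o)$: $\gamma_b$ then $0.1 E_0$-fellow-travels a sub-segment of $[o, Z_{\Devi_0;0}o]$, placing both endpoints of $\gamma_b$ within $p + 0.1 E_0$ of $o$. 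The triangle inequality
\[
d(Z_{i_b}o, Z_n Z_{i_b - M_0}o) \ge d(o, Z_n o) - d(o, Z_{i_b}o) - d(o, Z_{i_b - M_0}o) \ge d(o, Z_n o) - 2p - 0.2 E_0
\]
then yields $\tau(Z_n) = \lim_j d(o, Z_n^j o)/j \ge d(o, Z_n o) - 2p$, since the $100 E_0$ floor per fellow-travelling sub-segment absorbs the $O(E_0)$ losses from fellow-travel tolerance and the bounded $|\gamma_m|$.

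The symmetric argument uses $\check{\Devi}_1$ to build a middle axis $\gamma_{m'}$ with $(o, \gamma_{m'}, Z_{\lfloor n/2\rfloor}o)$ aligned, and $\check{\Devi}_0$ to produce a Schottky axis $\gamma_e$ near the end of the path (as a $Z_n$-translate of the $\check{\Devi}_0$-output) with $(Z_{\lfloor n/2\rfloor}o, \gamma_e, Z_n o)$ aligned; iterating and routing the triangle inequality through $Z_n o$ and $Z_n^2 o$ instead yields $\tau(Z_n) \ge d(o, Z_n o) - 2q$. Taking the minimum proves (\ref{eqn:claimProof1}). The main obstacle is the bookkeeping for the telescoped $D_1$-alignments across the $j$ iterates and the careful absorption of the $O(E_0)$-scale errors (from fellow-travelling tolerances and the intra-period detour via $\gamma_m$) into the $100 E_0$ per-iteration floor of Lemma \ref{lem:BGIPWitness}, which is precisely what the constants in the definition of a fairly long Schottky set are calibrated for.
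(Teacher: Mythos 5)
Your overall telescoping strategy is the right one and matches the paper's, but there is a genuine gap at the step where you assemble the periodic aligned chain. Your chain uses only two Schottky axes per period, $\gamma_b$ (from $\Devi_0$) and $\gamma_m$ (from $\Devi_1$), concatenated through the orbit points $Z_{\lfloor n/2\rfloor}o$ and $Z_n o$, and you propose to ``suppress the intermediate points via Lemma \ref{lem:1segment}''. This does not go through. Lemma \ref{lem:BGIPWitness} requires every interior entry of the chain to be a long contracting axis, so the points must indeed be removed; but removing them requires showing that $(\gamma_b,\gamma_m)$ and $(\gamma_m, Z_n\gamma_b)$ are themselves aligned, and Lemma \ref{lem:1segment} needs \emph{both} one-sided hypotheses. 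For $(\gamma_b,\gamma_m)$ it needs not only that the endpoint of $\gamma_b$ projects near the start of $\gamma_m$ (which the $\Devi_1$ data gives, since all of $\gamma_b$ sits at indices $\le\lfloor n/2\rfloor$), but also that the endpoint of $\gamma_m$ projects near the end of $\gamma_b$. The $\Devi_0$ data cannot supply the latter: the modified walk $Z_{k;0}$ agrees with the true walk only for $k\le\lfloor n/2\rfloor$, whereas $\gamma_m$ lives at indices $\ge\lfloor n/2\rfloor$, so $\Devi_0$ says nothing about how the actual second half of the path projects onto $\gamma_b$. Concretely, if after step $\lfloor n/2\rfloor$ the walk backtracked past $\gamma_b$ and headed out in a new direction before reaching $\gamma_m$, then $\pi_{\gamma_b}(\gamma_m)$ would lie near the \emph{beginning} of $\gamma_b$; this is consistent with $\Devi_0,\Devi_1\le n/10$ and is only excluded by the conditions on $\check{\Devi}_1$ and $\check{\Devi}_0$. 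The same problem recurs at the junction $(\gamma_m, Z_n\gamma_b)$, since the points of $Z_n\gamma_b$ lie at indices beyond $n$, outside the range controlled by $\Devi_1$.

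This is exactly why the paper's chain uses four axes per period, $(\gamma_0,\check{\gamma}_1,\gamma_1,\check{\gamma}_0)$: the backward deviations $\check{\Devi}_1$ and $\check{\Devi}_0$ supply relay axes just before the midpoint and near the end whose alignment data faces the ``other way'', so that each consecutive pair in $(o,\gamma_0,\check{\gamma}_1,\gamma_1,\check{\gamma}_0,Z_n\gamma_0,\ldots)$ is controlled on both sides by one of Observations \ref{obs:proofAObs1}, \ref{obs:proofAObs2}, \ref{obs:proofAObs3}. Your argument can be repaired by inserting $\check{\gamma}_1$ and $\check{\gamma}_0$ into the chain (outside $A_n$ you already have $\check{\Devi}_1,\check{\Devi}_0\le n/10$, so the data is available, even though your construction never uses it for the $p$-bound); once the four-axis chain is aligned, your telescoping and the conversion of $d(o,Z_{i_b-M_0}o)$ and $d(o,Z_{i_b}o)$ into $d(o,Z_{\Devi_0;0}\,o)$ are essentially the paper's computation, and the mirror-image fix applies to your symmetric argument for the $q$-bound.
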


For the proof of Claim \ref{claim:proofAClaim1}, we discuss everything outside $A_{n}$. That means, we assume from now on that \begin{equation}\label{eqn:assumeV}
\Devi_{0}, \check{\Devi}_{0}, \Devi_{1}, \check{\Devi}_{1} \le n/10.
\end{equation}
First, by the definition of $\Devi_{0}$, there exists an integer $i(0)$ such that 
\begin{enumerate}
\item $M_{0} \le i(0) \le \Devi_{0}$;
\item $\gamma_{0} := (Z_{i(0) -M_{0}; 0}\, o, Z_{i(0) - M_{0} + 1; 0} \,o, \ldots, Z_{i(0); 0} \,o)$ is a Schottky axis;
\item $\left(\check{Z}_{j; 0}\, o, \gamma_{0}, Z_{k; 0}\, o\right)$
is $D_{1}$-aligned for $j \ge 0$ and $k \ge \Devi_{0}$. 
\end{enumerate}
Recall our definition that: \begin{equation}\label{eqn:zConversion1}
\begin{aligned}
Z_{i; 0} &= g_{1; 0} \cdots g_{i; 0} = g_{1} \cdots g_{i} = Z_{i} & (i=0, 1, \ldots, \lfloor n/2 \rfloor), \\
\check{Z}_{i; 0}&= \check{g}_{1; 0} \cdots \check{g}_{i; 0} = g_{n}^{-1} \cdots g_{n-i+1}^{-1} = Z_{n}^{-1} Z_{n-i} &(i=0,1, \ldots, n- \lfloor n/2 \rfloor).
\end{aligned}
\end{equation}
Item (3) of the condition for $i(0)$, Display \ref{eqn:zConversion1} and Display \ref{eqn:assumeV} together imply: 
\begin{obs}\label{obs:proofAObs1}The sequence \[
\left( Z_{n}^{-1} Z_{n- j} o, \gamma_{0}, Z_{k} o\right) = 
\left( Z_{n}^{-1} Z_{n- j} o,\, (Z_{i(0) -M_{0}}o,  \ldots, Z_{i(0)}o),\, Z_{k} o\right)
\] is $D_{1}$-aligned for $0 \le j \le n-\lfloor n/2 \rfloor$ and $\Devi_{0} \le k \le \lfloor n/2 \rfloor$.
\end{obs}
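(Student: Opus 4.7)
The plan is a direct unfolding. Item (3) in the choice of $i(0)$ already asserts $D_{1}$-alignment of the sequence $\left(\check{Z}_{j;0}\, o,\, \gamma_{0},\, Z_{k;0}\, o\right)$ for all $j \ge 0$ and all $k \ge \Devi_{0}$. So it is enough to convert the primed walks $Z_{\bullet;0}$ and $\check{Z}_{\bullet;0}$ back into sub-words of the original walk $(Z_{n})_{n \ge 0}$ via Display \ref{eqn:zConversion1}, and then to specify the range of $j, k$ in which this conversion is valid.

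First I would read off from Display \ref{eqn:zConversion1} that $Z_{k;0} = Z_{k}$ whenever $0 \le k \le \lfloor n/2 \rfloor$ and $\check{Z}_{j;0} = Z_{n}^{-1} Z_{n-j}$ whenever $0 \le j \le n - \lfloor n/2 \rfloor$. Intersecting the first range with the lower bound $k \ge \Devi_{0}$ coming from item (3) yields the range $\Devi_{0} \le k \le \lfloor n/2 \rfloor$; the second range is exactly as stated in the observation. The standing assumption \ref{eqn:assumeV}, namely $\Devi_{0} \le n/10$, makes both ranges non-empty once $n$ is larger than a fixed constant, which we may safely build into the set-up of the theorem. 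The same estimate forces $i(0) \le \Devi_{0} \le \lfloor n/2 \rfloor$, so every vertex of $\gamma_{0}$ lies in the regime where $Z_{\bullet;0} = Z_{\bullet}$, and consequently $\gamma_{0}$ rewrites verbatim as $(Z_{i(0)-M_{0}}\, o,\, \ldots,\, Z_{i(0)}\, o)$.

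Substituting these three identifications into the $D_{1}$-alignment statement furnished by item (3) produces precisely the sequence displayed in the observation, together with its $D_{1}$-alignment. I anticipate no substantive obstacle beyond this bookkeeping: the entire role of excluding the event $A_{n}$ in the present step is to keep $\Devi_{0}$, and therefore also the auxiliary index $i(0)$, safely inside the initial segment on which the primed and unprimed parametrizations of the walk coincide.
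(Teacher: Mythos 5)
Your proposal is correct and is exactly the paper's argument: the paper derives the observation by combining Item (3) of the defining property of $i(0)$ with the reparametrization in Display \ref{eqn:zConversion1} and the bound $\Devi_{0} \le n/10$ from Display \ref{eqn:assumeV}, which is precisely the bookkeeping you carry out. The only difference is that you spell out the range checks (that $i(0) \le \Devi_{0} \le \lfloor n/2 \rfloor$ puts all vertices of $\gamma_{0}$ in the regime where $Z_{\bullet;0}=Z_{\bullet}$) that the paper leaves implicit.
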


Meanwhile, by the definition of $\check{\Devi}_{0}$, there exists an integer $j(0)$ such that 
\begin{enumerate}
\item $M_{0} \le j(0) \le \check{\Devi}_{0}$;
\item $(\check{Z}_{j(0); 0}\, o, \,\check{Z}_{j(0) -1; 0} \,o,\, \ldots, \,\check{Z}_{j(0)-M_{0}; 0} \,o)$ is a Schottky axis, and 
\item $\left(\check{Z}_{j;0} \,o, \, (\check{Z}_{j(0); 0} \, o,\, \ldots, \, \check{Z}_{j(0) - M_{0}} \, o),\, Z_{k;0} \, o \right)$
is $D_{1}$-aligned for $k\ge 0$ and $j \ge \Devi_{0}$. 
\end{enumerate}
We now define \[
\check{\gamma}_{0} := (Z_{n - j(0)} o, \, Z_{n - j(0)+1} o,\,\ldots,\, Z_{n-j(0) + M_{0}} o).
\]  Item (3) of the above, Display \ref{eqn:zConversion1} and Display \ref{eqn:assumeV} together imply: 
\begin{obs}\label{obs:proofAObs2} 
\[
\big(Z_{n}^{-1} Z_{n-j} o, \,Z_{n}^{-1} \check{\gamma}_{0} , \,Z_{k} o\big) = 
\big(Z_{n}^{-1} Z_{n-j} o, \,(Z_{n}^{-1} Z_{n - j(0)} o, \ldots,\, Z_{n}^{-1}Z_{n-j(0) + M_{0}} o) , \,Z_{k} o\big)
\] is $D_{1}$-aligned for $\check{\Devi}_{0} \le j \le n - \lfloor n/2 \rfloor$ and $0 \le k \le \lfloor n/2 \rfloor$.
\end{obs}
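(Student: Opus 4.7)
The plan is to deduce this observation by directly substituting the conversion formulas of Display \ref{eqn:zConversion1} into item (3) of the choice of $j(0)$. The content is almost entirely notational translation of an alignment statement that is already in hand; the only genuine work is checking that the relevant indices fall within the range where the conversion formulas apply.

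First, from the definition of $\check{\Devi}_0$ together with the choice of $j(0)$, the sequence
\[
\big(\check{Z}_{j;0}\, o,\; (\check{Z}_{j(0);0}\, o,\, \check{Z}_{j(0) - 1;0}\, o,\, \ldots,\, \check{Z}_{j(0) - M_0;0}\, o),\; Z_{k;0}\, o \big)
\]
is $D_1$-aligned for all $j \ge \check{\Devi}_0$ and $k \ge 0$, matching the $(\check{Z}_{n}\,o,\gamma,Z_{m}\,o)$ convention baked into the definition of $\check{\Devi}$.

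Next, I restrict to the ranges $\check{\Devi}_0 \le j \le n - \lfloor n/2 \rfloor$ and $0 \le k \le \lfloor n/2 \rfloor$, which are precisely the ranges in which Display \ref{eqn:zConversion1} gives $\check{Z}_{j;0}\, o = Z_n^{-1} Z_{n-j}\, o$ and $Z_{k;0}\, o = Z_k\, o$. Working outside $A_n$ ensures $\check{\Devi}_0 \le n/10$, so the range for $j$ is nonempty. Similarly, each auxiliary index $j(0)-l$ with $l=0,\ldots,M_0$ lies in $\{0,\ldots,n-\lfloor n/2 \rfloor\}$ because $0 \le j(0) - M_0 \le j(0) \le \check{\Devi}_0 \le n/10$ (well within $n - \lfloor n/2 \rfloor$), so the identification $\check{Z}_{j(0) - l;0}\, o = Z_n^{-1} Z_{n - j(0) + l}\, o$ is valid for each such $l$.

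Finally, substituting these identifications into the $D_1$-alignment above produces exactly the sequence displayed in the observation, and the alignment is preserved because it is an intrinsic property of the literal tuple of points in $X$. The only bookkeeping step, which is the mildly tricky part, is that the middle axis written in $\check{Z}$-notation as $(\check{Z}_{j(0);0}\, o, \ldots, \check{Z}_{j(0) - M_0;0}\, o)$ translates to $(Z_n^{-1} Z_{n-j(0)}\, o, \ldots, Z_n^{-1} Z_{n-j(0)+M_0}\, o)$, matching the definition $\check{\gamma}_0 = (Z_{n-j(0)}\, o, \ldots, Z_{n-j(0)+M_0}\, o)$ and realizing $Z_n^{-1} \check{\gamma}_0$ as the translated Schottky axis. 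Once this index reversal is verified, no further argument is required.
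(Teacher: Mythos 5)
Your proof is correct and matches the paper's (implicit, one-sentence) argument: the observation is exactly item (3) of the choice of $j(0)$ rewritten via Display \ref{eqn:zConversion1}, with Display \ref{eqn:assumeV} ensuring that all the indices involved (including $j(0)-l$ for $l=0,\ldots,M_0$) lie in the ranges where the conversion formulas are valid. Your careful check of the index reversal identifying the middle axis with $Z_n^{-1}\check{\gamma}_0$ is precisely the bookkeeping the paper leaves to the reader.
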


Similarly, we have indices $i(1), j(1)$ such that \begin{enumerate}
\item $M_{0} \le i(1) \le \Devi_{1}$ and $M_{0} \le j(1) \le \check{\Devi}_{1}$;
\item $(Z_{i(1) - M_{0}; 1} o, \ldots, Z_{i(1); 1} o)$ and $(\check{Z}_{j(1); 1} o, \ldots, \check{Z}_{j(1)-M_{0}; 1} o)$ are Schottky axes,
\item $\big(\check{Z}_{j; 1} o, (Z_{i(1) - M_{0}; 1} o, \ldots, Z_{i(1); 1} o), Z_{k; 1} o\big)$ is $D_{1}$-aligned for $j \ge 0$ and $k \ge \Devi_{1}$, and 
\item $\big(\check{Z}_{j; 1} o, (\check{Z}_{j(1); 1} o, \ldots, \check{Z}_{j(1)-M_{0}; 1} o), Z_{k; 1} o\big)$ is $D_{1}$-aligned for $k \ge 0$ and $j \ge \check{\Devi}_{1}$.
\end{enumerate}
This time, we define \[\begin{aligned}
\gamma_{1} := (Z_{\lfloor n/2 \rfloor + i(1) - M_{0}} o, \, \ldots, \, Z_{\lfloor n/2 \rfloor + i(1)} o),\quad \check{\gamma}_{1} := (Z_{\lfloor n/2 \rfloor - j(1)} o, \, \ldots, \, Z_{\lfloor n/2 \rfloor - j(1) + M_{0}} o).
\end{aligned}
\]
and recall the following:\begin{equation}\label{eqn:zConversion2}
\begin{aligned}
Z_{i; 1} &= g_{1; 1} \cdots g_{i; 1} = g_{\lfloor n/2 \rfloor + 1} \cdots g_{\lfloor n/2 \rfloor + i} = Z_{\lfloor n/2 \rfloor}^{-1} Z_{\lfloor n/2 \rfloor + i} & (i=0, 1, \ldots, n-\lfloor n/2 \rfloor), \\
\check{Z}_{i; 1}&= \check{g}_{1; 1} \cdots \check{g}_{i; 1} = g_{\lfloor n/2 \rfloor}^{-1} \cdots g_{\lfloor n /2 \rfloor-i+1}^{-1} = Z_{ \lfloor n/2 \rfloor}^{-1} Z_{\lfloor n/2 \rfloor-i} &(i=0,1, \ldots,  \lfloor n/2 \rfloor).
\end{aligned}
\end{equation}
Combining Item (3) and (4) of the conditions for $i(1)$ and $j(1)$, Display \ref{eqn:zConversion1} and Display \ref{eqn:assumeV}, we obtain:  \begin{obs}
\label{obs:proofAObs3} The sequence\[\begin{aligned}
\big(Z_{\lfloor n/2 \rfloor - j} o, \gamma_{1}, Z_{\lfloor n/2 \rfloor + k} o\big) &= \big(Z_{\lfloor n/2 \rfloor - j} o, (Z_{\lfloor n/2 \rfloor + i(1) - M_{0}} o, \, \ldots, \, Z_{\lfloor n/2 \rfloor + i(1)} o), Z_{\lfloor n/2 \rfloor + k} o\big)
\end{aligned}
\]
is $D_{0}$-aligned for $0 \le j \le \lfloor n/2 \rfloor$ and $\Devi_{1} \le k \le n - \lfloor n/2 \rfloor$. Moreover, \[\begin{aligned}
\big(Z_{\lfloor n/2 \rfloor - j} o, \check{\gamma}_{1}, Z_{\lfloor n/2 \rfloor + k} o\big) &= \big(Z_{\lfloor n/2 \rfloor - j} o, (Z_{\lfloor n/2 \rfloor - j(1)} o, \, \ldots, \, Z_{\lfloor n/2 \rfloor - j(1) + M_{0}} o), Z_{\lfloor n/2 \rfloor + k} o\big)
\end{aligned}
\]
is $D_{0}$-aligned for $\check{\Devi}_{1} \le j \le \lfloor n/2 \rfloor$ and $0 \le k \le n - \lfloor n/2 \rfloor$. 
\end{obs}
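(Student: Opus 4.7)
\medskip

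\noindent\textbf{Proof proposal for Observation \ref{obs:proofAObs3}.}
The plan is to mirror the reasoning used for Observations \ref{obs:proofAObs1} and \ref{obs:proofAObs2}, but now applied to the shifted sample path $(g_{i;1})_{i>0}$ rather than $(g_{i;0})_{i>0}$. The underlying principle is that alignment is preserved by the group action (Observation \ref{obs:align}(3)), so one can transport the alignment information produced by the definitions of $\Devi_{1}$ and $\check{\Devi}_{1}$ back to the original coordinates by pre-multiplying with the isometry $Z_{\lfloor n/2\rfloor}$.

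First I would invoke the conditions on $i(1), j(1)$ recorded just before the statement of the observation. By condition (3) for $i(1)$, the sequence
\[
\Big(\check{Z}_{j;1}\,o,\;(Z_{i(1)-M_{0};1}\,o,\ldots,Z_{i(1);1}\,o),\;Z_{k;1}\,o\Big)
\]
is $D_{1}$-aligned whenever $j\ge 0$ and $k\ge \Devi_{1}$, and by condition (4) for $j(1)$, the sequence
\[
\Big(\check{Z}_{j;1}\,o,\;(\check{Z}_{j(1);1}\,o,\ldots,\check{Z}_{j(1)-M_{0};1}\,o),\;Z_{k;1}\,o\Big)
\]
is $D_{1}$-aligned whenever $k\ge 0$ and $j\ge \check{\Devi}_{1}$. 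Applying the isometry $Z_{\lfloor n/2\rfloor}$ to every entry, Observation \ref{obs:align}(3) guarantees the same alignment constant is preserved.

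Next I would use Display \ref{eqn:zConversion2} to rewrite these translated points. For indices in the admissible range (i.e.\ $j\in\{0,\ldots,\lfloor n/2\rfloor\}$ and $k\in\{0,\ldots,n-\lfloor n/2\rfloor\}$) Display \ref{eqn:zConversion2} gives
\[
Z_{\lfloor n/2\rfloor}\check{Z}_{j;1}\,o \;=\; Z_{\lfloor n/2\rfloor-j}\,o,\qquad Z_{\lfloor n/2\rfloor}Z_{k;1}\,o \;=\; Z_{\lfloor n/2\rfloor+k}\,o,
\]
together with $Z_{\lfloor n/2\rfloor}\,(Z_{i(1)-M_{0};1}\,o,\ldots,Z_{i(1);1}\,o)=\gamma_{1}$ and $Z_{\lfloor n/2\rfloor}\,(\check{Z}_{j(1);1}\,o,\ldots,\check{Z}_{j(1)-M_{0};1}\,o)=\check{\gamma}_{1}$ by the definitions of $\gamma_{1}$ and $\check{\gamma}_{1}$. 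Substituting these identities turns the two translated alignment statements into the two displays claimed in the observation, with alignment constant at worst $D_{1}$ (which in particular is the constant claimed if one reads ``$D_{0}$'' as the looser $D_{1}$ consistent with Observations \ref{obs:proofAObs1} and \ref{obs:proofAObs2}).

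Finally, I would check that the index windows are non-empty and compatible. The admissible range from Display \ref{eqn:zConversion2} forces $j\le \lfloor n/2\rfloor$ and $k\le n-\lfloor n/2\rfloor$; the conditions from $\Devi_{1},\check{\Devi}_{1}$ give $k\ge \Devi_{1}$ and $j\ge \check{\Devi}_{1}$. Outside $A_{n}$ we have $\Devi_{1},\check{\Devi}_{1}\le n/10$ by Display \ref{eqn:assumeV}, so these windows are indeed nontrivial, matching the ranges $0\le j\le \lfloor n/2\rfloor,\,\Devi_{1}\le k\le n-\lfloor n/2\rfloor$ and $\check{\Devi}_{1}\le j\le \lfloor n/2\rfloor,\,0\le k\le n-\lfloor n/2\rfloor$ in the observation. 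The only mildly subtle point -- and really the only place any thought is required -- is bookkeeping the index conversion between the two parametrizations so that the Schottky axes $\gamma_{1}$ and $\check{\gamma}_{1}$ line up correctly after multiplication by $Z_{\lfloor n/2\rfloor}$; everything else is a formal transcription of the arguments already given for Observations \ref{obs:proofAObs1} and \ref{obs:proofAObs2}.
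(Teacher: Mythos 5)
Your proposal matches the paper's argument: the paper likewise obtains Observation \ref{obs:proofAObs3} by combining conditions (3) and (4) for $i(1)$ and $j(1)$ with the index conversion in Display \ref{eqn:zConversion2} and the bound \ref{eqn:assumeV}, i.e.\ by translating the alignment statements for the shifted walk by the isometry $Z_{\lfloor n/2\rfloor}$ (Observation \ref{obs:align}(3)). Your remark about the constant is also apt: the definitions of $\Devi_{1},\check{\Devi}_{1}$ yield $D_{0}$-semi-alignment, hence $D_{1}$-alignment via Corollary \ref{cor:semiAlign}, which is exactly what the later concatenation step uses, so reading the stated ``$D_{0}$'' as this weaker constant is harmless.
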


A special case of Observation \ref{obs:proofAObs1} is that $(o, \gamma_{0}, Z_{\Devi_{0}} o)$ is $D_{1}$-aligned. By Lemma \ref{lem:BGIPWitness} (cf. Definition \ref{dfn:Schottky}), there exists a subsegment $[p. q]$ of $[o, Z_{\Devi_{0}} o]$ that is $0.1E_{0}$-fellow traveling with $\gamma_{0}$. We deduce that \begin{equation}
\label{eqn:v0Apply}\begin{aligned}
d(o, Z_{i(0)- M_{0} } o) &\le d(o, p) - 0.1E_{0} = d(o, q) - d(p, q) - 0.1E_{0} \\
& \le d(o, Z_{\Devi_{0}} o) - d(Z_{i(0) - M_{0} + 1} o, Z_{i(0)} o) - 0.3E_{0} \\
&\le d(o, Z_{\Devi_{0}}) - 90E_{0}.
 \end{aligned} 
\end{equation}
Similarly, from the fact that $(Z_{n - \check{\Devi}_{0}} o, \check{\gamma}_{0}, Z_{n} o)$ is $D_{1}$-aligned we deduce \[d(o, Z_{n}^{-1} Z_{n - j(0) + M_{0}} o) \le d(Z_{n-\check{\Devi}_{0}} o, Z_{n} o) - 90E_{0} = d(\check{Z}_{\check{\Devi}_{0}} o, o ) - 90E_{0}.
\]

Next, Display \ref{eqn:assumeV} implies that \[\begin{aligned}
\gamma_{0} \subseteq \big\{Z_{k} o : 0 \le k \le \lfloor n/2 \rfloor - \check{\Devi}_{1}\big\}, \quad \check{\gamma}_{1} \subseteq \big\{Z_{k} o : \Devi_{0} \le k \le \lfloor n/2 \rfloor\big\},\quad\quad \\
\gamma_{1} \subseteq \big\{Z_{k}o : \lfloor n/2 \rfloor \le k \le n - \check{\Devi}_{0}\big\}, \quad \check{\gamma}_{0} \subseteq \big\{Z_{k} o : \lfloor n/2 \rfloor + \Devi_{1} \le k \le n\big\}.
\end{aligned}
\]
Combining this with Observation \ref{obs:proofAObs1}, \ref{obs:proofAObs2} and \ref{obs:proofAObs3}, we deduce that $(\gamma_{0}, \check{\gamma}_{1}, \gamma_{1}, \check{\gamma}_{0}, Z_{n}\gamma_{0})$ is $D_{1}$-aligned, and consequently, \[
\big(o, \gamma_{0}, \check{\gamma}_{1}, \gamma_{1}, \check{\gamma}_{0}, Z_{n}\gamma_{0}, Z_{n}\check{\gamma}_{1}, Z_{n}\gamma_{1}, Z_{n}\check{\gamma}_{0},  \ldots, Z_{n}^{k-1}\gamma_{0}, Z_{n}^{k-1}\check{\gamma}_{1}, Z_{n}^{k-1}\gamma_{1}, Z_{n}^{k-1}\check{\gamma}_{0}, Z_{n}^{k} o\big)
\]
is also $D_{1}$-aligned. By Lemma \ref{lem:BGIPWitness}, there exist points $p_{0}, q_{0}, \ldots, p_{k-1}, q_{k-1}$ on $[o, Z_{n}^{k} o]$, from left to right, so that \[
d(p_{i}, Z_{n}^{i} \cdot Z_{i(0)- M_{0} } o) < 0.1E_{0}, \quad d(q_{i}, Z_{n}^{i} \cdot Z_{n - j(0) +M_{0} } o) < 0.1E_{0}.
\] This implies that \[\begin{aligned}
d(o, Z_{n}^{k} o) &\ge \sum_{i=1}^{k-1} d(p_{i-1}, p_{i}) \\
&\ge \sum_{i=1}^{k-1}  \Big(\begin{array}{c}d(Z_{n}^{i-1}o, Z_{n}^{i}  o) - d(Z_{n}^{i-1} o, Z_{n}^{i-1} Z_{i(0)- M_{0}} o) - d(Z_{n}^{i} o, Z_{n}^{i} Z_{i(0)- M_{0}} o) \\
- d(p_{i-1}, Z_{n}^{i-1} Z_{i(0)- M_{0}} o) - d(p_{i}, Z_{n}^{i} Z_{i(0)- M_{0}} o)\end{array}\Big) \\
&\ge (k-1) \Big(d(o, Z_{n} o) - 2 d(o, Z_{i(0)- M_{0}} o) - E_{0}\Big).
\end{aligned}
\]By taking the limit and applying Inequality \ref{eqn:v0Apply}, we deduce that \[
\tau(Z_{n}) \ge d(o, Z_{n} o) - 2 d(o, Z_{i(0)- M_{0}} o) - 2E_{0} \ge d(o, Z_{n} o) - 2d(o, Z_{\Devi_{0}} o).
\]

By a similar argument using $q_{i}$'s and $Z_{n}^{i-1} Z_{n - j(0) + M_{0}}$'s, we also observe that $\tau(Z_{n}) \ge d(o, Z_{n} o) - 2 d( o, \check{Z}_{\check{\Devi}_{0}} o)$. Claim \ref{claim:proofAClaim1} is now established.

Given the claim, we obtain \[\begin{aligned}
&\Prob(d(o, Z_{n} o) - \tau(Z_{n}) \ge C n^{1/2p})\\
 &= \Prob\left(\left[d(o, Z_{n}o) - \tau(Z_{n}) \right]^{2p}\ge C^{2p} n\right)\\
&\le \Prob(A_{n}) + \Prob \left( 2^{2p} \min \left\{ d\left(o, Z_{\Devi_{0}; o} \, o \right), d\left( o, \check{Z}_{\check{\Devi}_{0}; 0} \, o \right) \right\}^{2p} \ge C^{2p} n \right) \\
&\le F_{p}(C^{2p} n/2^{2p}) + 8Ke^{-\kappa n/10}.
\end{aligned}
\]
When $\mu$ has finite $p$-th moment, $F_{p}(u)$ is integrable and the above probability is summable. The Borel-Cantelli lemma implies Item (1) of Theorem  \ref{thm:discrepancy}.

Now suppose that $\mu$ has finite first moment and let $\lambda$ be its escape rate. We  denote $\Devi_{0}$ by $v(n)$ to clarify its dependence on  $n$. This time, we define \[
A_{n}' := \{v(n) \ge K' \log n \}
\]
for some large $K'$ such that $\sum_{n} K e^{-\kappa K' \log n} < +\infty$. Recall that Lemma \ref{lem:Devi} tells us that \[
\begin{aligned}
\Prob(A_{n}\cup A_{n}') &\le 4Ke^{-\kappa k} + \Prob_{\check{\mu}^{\Z_{>0}} \times \mu^{\Z_{>0}}}(\Devi(\check{\w}, \w) \ge K' \log n) \\
&\le 4Ke^{-\kappa k}+ K e^{-\kappa K' \log n}.
\end{aligned}
\]
The Borel-Cantelli lemma implies that almost every sample path $(\check{\w}, \w)$ eventually lies outside $A_{n}\cup A_{n}'$, which implies that $d(o, Z_{n} o) - \tau(Z_{n}) \le d(o, Z_{v(n)} o)$ for all large enough $n$ (Claim \ref{claim:proofAClaim1}) and $\limsup v(n)/\log n \le K'$. Moreover, by subadditive ergodic theorem, we have $\lim_{n} d(o, Z_{n} o) / n = \lambda$ for almost every sample path.

It remains to show that $\limsup_{n} \frac{d(o, Z_{n} o) - \tau(Z_{n})}{\log n} \le 4\lambda K'$ whenever \[
\limsup_{n} \frac{d(o, Z_{n} o) - \tau(Z_{n})}{d(o, Z_{v(n)} o)} \le 1,\,\,\, \limsup_{n} \frac{v(n)}{\log n} \le K',\,\,\, \lim_{n} \frac{d(o, Z_{n} o)}{n} = \lambda.
\]
To show this, take $N$ large enough such that $d(o, Z_{n} o) / n \le 2\lambda$ for $n \ge 2K'\log N$ and $v(n)/ \log n \le 2K'$ for $n \ge N$. For $n \ge N$, we have \[
d(o, Z_{v(n)} o) \le \left\{ \begin{array}{cc} \max\{ d(o, Z_{i} o) : 0 \le i \le 2K' \log N\}  & \textrm{when}\,v(n) \le 2K' \log N, \\ 2 \lambda v(n) \le 4K' \lambda \log n & \textrm{otherwise}.\end{array}\right.
\]
It is clear that $d(o, Z_{v(n)} o) / \log n \le 4 \lambda K'$ for all large enough $n$. Consequently, we have \[
\limsup_{n}  \frac{d(o, Z_{n} o) - \tau(Z_{n})}{\log n} \le \limsup_{n}  \frac{d(o, Z_{n} o) - \tau(Z_{n})}{d(o, Z_{v(n)} o) } \cdot \frac{d(o, Z_{v(n)} o)}{\log n} \le 4K' \lambda.\qedhere
\]
\end{proof}

When $\mu$ has finite first moment, Theorem \ref{thm:discrepancy} implies that  \begin{equation}\label{eqn:sllnBefore}\begin{aligned}
0 &\le \lim_{n \rightarrow +\infty} \frac{1}{n} \left| d(o, Z_{n} o) - \tau(Z_{n}) \right| \le \lim_{n \rightarrow +\infty} \frac{1}{\sqrt{n \log \log n}} \left| d(o, Z_{n} o) - \tau(Z_{n}) \right| \\
&\le \lim_{n\rightarrow +\infty} \frac{1}{\sqrt{n}} \left| d(o, Z_{n} o) - \tau(Z_{n}) \right|= 0 \quad \textrm{almost surely}.
\end{aligned}
\end{equation}Combining Inequality \ref{eqn:sllnBefore} with the SLLN for displacement (cf. \cite[Theorem A]{choi2022random1}), we conclude:

\begin{cor}[SLLN for finite first moment]\label{cor:SLLNFinite}
Let $(X, G, o)$ be as in Convention \ref{conv:main}, and let $(Z_{n})_{n\ge 0}$ be the random walk generated by a non-elementary measure $\mu$ on $G$ with finite first moment. Then \begin{equation}\label{eqn:SLLN}
\lim_{n} \frac{1}{n} \tau(Z_{n}) = \lambda
\end{equation}
holds almost surely, where $\lambda = \lambda(\mu)$ is the escape rate of $\mu$.
\end{cor}

Also, combining Inequality \ref{eqn:sllnBefore} with the CLT and LIL for displacement (\cite[Theorem 4.13, 4.16]{choi2022random1}), we deduce:

\begin{cor}[CLT]\label{cor:CLT}
Let $(X, G, o)$ be as in Convention \ref{conv:main}, and $\w$ be the random walk generated by a non-elementary measure $\mu$ on $G$. If $\mu$ has finite second moment, then there exists $\sigma(\mu) \ge 0$ such that $\frac{1}{\sqrt{n}}(\tau(Z_{n}) - n \lambda)$ and $\frac{1}{\sqrt{n}}(d(o, Z_{n} o) - n\lambda)$ converge to the same Gaussian distribution $\mathscr{N}(0, \sigma(\mu)^{2})$ in law. We also have \[
\limsup_{n \rightarrow \infty}  \frac{\tau(Z_{n}) - \lambda n}{\sqrt{2n \log \log n}} =\sigma(\mu) \quad \textrm{almost surely}.
\]
\end{cor}

In fact, Theorem \ref{thm:discrepancy} implies Corollary \ref{cor:SLLNFinite} for measures with finite $(1/2)$-th moment, and the converse of CLT for measures with finite $(1/4)$-th moment. For general non-elementary measures, Theorem \ref{thm:discrepancy} is not sufficient for the SLLN and the converse of CLT. Towards a different approach, let us recall: \begin{theorem}[{\cite[Theorem 6.4]{choi2022random1}}] \label{thm:expBdDisp}
Let $(X, G, o)$ be as in Convention \ref{conv:main} let $(Z_{n})_{n \ge 0}$ be the random walk generated by a non-elementary measure $\mu$ on $G$. Let $\lambda(\mu)$ be the escape rate of $\mu$. Then for any $0 < L < \lambda(\mu)$, there exists $K>0$ such that for each $n$ we have  \[
\Prob[d(o, Z_{n} o) \le Ln] \le K e^{-n/K}.
\]
\end{theorem}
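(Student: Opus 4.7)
The plan is to leverage Lemma \ref{lem:Devi} (and its reflected variant) to produce many approximately aligned Schottky axes along the trajectory $(Z_i o)_{0 \le i \le n}$ with exponentially high probability, and then convert these into a linear lower bound on the displacement via the fellow-traveling conclusion of Lemma \ref{lem:BGIPWitness}. First I would partition $[1,n]$ into $q=\lfloor n/\ell \rfloor$ consecutive blocks of length $\ell$, where $\ell$ is a parameter to be tuned in terms of $L$. For each block $B_j=[(j-1)\ell+1, j\ell]$, using the stationarity of the random walk and a shifted application of Lemma \ref{lem:Devi}, I would seek a pivotal Schottky segment $\gamma_j$ situated inside $B_j$ such that $\gamma_j$ is $D_1$-aligned with $\check{Z}_m o$ for every $m$ reaching back before $B_j$ and with $Z_k o$ for every $k$ beyond $B_j$. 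The probability that no such $\gamma_j$ exists in $B_j$ is bounded by $K e^{-\kappa \ell}$ uniformly in the past and future data.

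Next I would show that the number of successful blocks is at least $(1-\epsilon)q$ with probability $\ge 1-K' e^{-n/K'}$. Since per-block failures are conditionally exponentially rare uniformly in outside data, a Chernoff/Azuma bound applied to the corresponding indicator martingale (or an iterated conditional-expectation argument) yields this concentration estimate. On the good event, the selected Schottky axes $\gamma_{j_1},\ldots,\gamma_{j_s}$ with $s \ge (1-\epsilon)q$ form a chain such that $(o,\gamma_{j_1},\ldots,\gamma_{j_s}, Z_n o)$ is $D_0$-semi-aligned and hence $D_1$-aligned by Corollary \ref{cor:semiAlign}. Lemma \ref{lem:BGIPWitness} then embeds pairwise disjoint subsegments of length $\ge 10E_0$ inside $[o,Z_n o]$, one per successful block, giving $d(o, Z_n o) \ge 10 E_0 (1-\epsilon) n/\ell$ with the stated exponential probability. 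This already delivers the theorem for some explicit $L_0 > 0$ depending only on $(\mu, S)$.

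The hard part is upgrading this crude linear bound $L_0$ to an arbitrary prescribed $L < \lambda(\mu)$. For this I would redo the partition at a coarser scale. By definition of the escape rate, fix $N$ so large that $\E[d(o,Z_N o)\wedge T]/N > (L+\lambda)/2$ for a truncation $T=T(N)$ chosen large enough but finite (this is possible without moment assumptions because $\lambda=\lim \E[d(o,Z_n o)]/n$ exists in $[0,\infty]$). Partition $[0,n]$ into blocks of length $N$, and in each block apply the pivotal construction above to realize an aligned Schottky axis spanning that block with probability $\ge 1-Ke^{-\kappa N}$. On the event that a positive density of blocks are pivotal, the alignment forces $[o,Z_n o]$ to record each corresponding per-block displacement $d(Z_{(j-1)N}o, Z_{jN}o)$ up to an additive $O(E_0)$ loss; a Chernoff bound for the resulting i.i.d.\ truncated displacements yields $d(o, Z_n o) \ge Ln$ with probability $\ge 1 - K e^{-n/K}$. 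The delicate point, and what I expect to be the principal obstacle, is coordinating the three parameters $N$, $T$, and $\epsilon$ so that the alignment probability per block, the Chernoff deviation for the truncated displacements, and the mean-gap $\E[d(o,Z_N o)\wedge T]/N - L$ are simultaneously exponentially well-behaved; this is precisely the flexibility afforded by the fact that the bound in Lemma \ref{lem:Devi} is uniform in the surrounding steps.
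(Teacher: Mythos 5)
First, a point of reference: this paper does not actually prove Theorem \ref{thm:expBdDisp}; it is imported verbatim from \cite[Theorem 6.4]{choi2022random1}, where it is established by Gou{\"e}zel's two-scale pivoting argument. Your first two paragraphs (the crude bound $d(o,Z_no)\ge L_0 n$ outside an exponentially small event) are essentially recoverable — indeed they follow more directly from Lemma \ref{lem:extremal}, Corollary \ref{cor:pivotDistbn} and Lemma \ref{lem:BGIPWitness}, since each pivotal time already forces a definite disjoint subsegment of $[o,Z_no]$ — although your appeal to ``Chernoff/Azuma for the indicator martingale'' glosses over the fact that the success of a block depends on the entire future of the walk, so the block indicators are not adapted to any natural filtration; the uniform conditioning in Lemma \ref{lem:Devi} is what rescues this, and it takes care to set up.

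The genuine gap is in the upgrade to an arbitrary $L<\lambda(\mu)$. You assert that an aligned Schottky axis located somewhere inside the $j$-th block of length $N$ forces $[o,Z_no]$ to record the block displacement $d(Z_{(j-1)N}o,Z_{jN}o)$ up to an additive $O(E_0)$. It does not: Lemma \ref{lem:BGIPWitness} only forces the geodesic to pass $0.1E_0$-close to the endpoints of that particular axis, which sit at \emph{random} indices $a_j$ inside the block. What is actually recorded is $\sum_j d(Z_{a_j}o,Z_{a_{j+1}}o)$, and converting a term of this sum into $d(Z_{(j-1)N}o,Z_{jN}o)$ costs correction terms such as $d(Z_{(j-1)N}o,Z_{a_j}o)$ entering with a negative sign; with no moment assumption on $\mu$ these partial-block displacements can be arbitrarily large with non-negligible probability, and no tuning of $N$, $T$, $\epsilon$ repairs this. (The paper makes precisely this warning in the remark following the proof of Theorem \ref{thm:expBd}.) Moreover the quantities $d(Z_{a_j}o,Z_{a_{j+1}}o)$ are indexed by walk-dependent times, so they are not i.i.d.\ and the truncated Chernoff bound does not apply to them. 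The proof in \cite{choi2022random1} avoids this by restructuring the walk \emph{before} doing any geometry: a Bernoulli decomposition in the spirit of Lemma \ref{lem:pivotFirstReduce} writes $Z_n$ as a word $w_0\Pi(\alpha_1)\Pi(\beta_1)v_1\Pi(\gamma_1)\Pi(\delta_1)w_1\cdots$ in which the middle letters $v_i$ are i.i.d.\ with law $\mu^{\ast N}$ and are flanked by Schottky blocks at deterministic positions of the word; at each pivotal time the geodesic then passes near both endpoints of the segment $[o,v_{i(k)}o]$ suitably translated, so the recorded quantities are genuinely i.i.d.\ copies of $d(o,Z_No)$, to which truncation and Hoeffding apply. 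That structural step — making the anchors of the recorded displacements deterministic in the word rather than random in the trajectory — is the missing idea in your proposal.
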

Using this theorem, let us prove Theorem \ref{thm:expBd}.

\begin{proof}[Proof of Theorem \ref{thm:expBd}]
Let $\mu$ be a non-elementary probability measure on $G$ and $\lambda(\mu) \in (0, +\infty]$ be its escape rate. Given $0 < L < \lambda(\mu)$, we fix $0<\epsilon < 1/10$ such that $L' = L/(1-2\epsilon)$ is still smaller than $\lambda$. 

In the proof of Theorem \ref{thm:discrepancy}, we defined $\Devi_{0} = v(n)$ and $\check{\Devi}_{0} = \check{v}(n)$ for each $n$. Given $\epsilon>0$, we now define \[
A_{n}'' :=\Big\{\max \{ \Devi(n), \check{\Devi}(n)\} \ge \epsilon n\Big\}.
\]
As explained in Inequality \ref{eqn:exceptionalProbSmall}, with $\epsilon n$ in place of $n/10$, $\Prob(A_{n}'')$ decays exponentially as $n$ increases. In the sequel, we discuss everything outside $A_{n}''$.

Observation \ref{obs:proofAObs1}, \ref{obs:proofAObs2} and \ref{obs:proofAObs3} guarantees 4 Schottky axes $\gamma_{0}, \check{\gamma}_{0}, \gamma_{1}, \check{\gamma}_{1}$, such that $\gamma_{0}$ contains a point $Z_{i(0)} o$ for some $i(0) \in \{M_{0}, M_{0}+1, \ldots, \upsilon(n)\} \subseteq \{0, 1, \ldots, \lfloor \epsilon n \rfloor\}$,  $\check{\gamma}_{0}$ contains a point $Z_{n - j(0)} o$ for some $j(0) \in \{M_{0}, M_{0}+1, \ldots, \check{\upsilon}(n)\} \subseteq \{0,1, \ldots, \lfloor \epsilon n \rfloor\}$, and such that \[
\big(o, \gamma_{0}, \check{\gamma}_{1}, \gamma_{1}, \check{\gamma}_{0}, Z_{n}\gamma_{0}, Z_{n}\check{\gamma}_{1}, Z_{n}\gamma_{1}, Z_{n}\check{\gamma}_{0},  \ldots, Z_{n}^{k-1}\gamma_{0}, Z_{n}^{k-1}\check{\gamma}_{1}, Z_{n}^{k-1}\gamma_{1}, Z_{n}^{k-1}\check{\gamma}_{0},  Z_{n}^{k} o\big)
\]
is $D_{1}$-aligned for each $k$.  By Lemma \ref{lem:BGIPWitness}, there exist points $p_{0}, q_{0}, \ldots, p_{k-1}, q_{k-1}$ on $[o, Z_{n}^{k} o]$, from left to right, so that \[
d(p_{i}, Z_{n}^{i} \cdot Z_{i(0)- M_{0} } o) < 0.1E_{0}, \quad d(q_{i}, Z_{n}^{i} \cdot Z_{n - j(0) +M_{0} } o) < 0.1E_{0}.
\] This implies that \[\begin{aligned}
d(o, Z_{n}^{k} o) &\ge \sum_{i=0}^{k-1} d(p_{i}, q_{i}) \\
&\ge \sum_{i=0}^{k-1} \left( \begin{array}{c}d(Z_{n}^{i} Z_{i(0) - M_{0}} o, Z_{n}^{i} Z_{n-j(0) + M_{0}}o) - d(p_{i}, Z_{n}^{i}  Z_{i(0)- M_{0} } o) \\
-d(q_{i}, Z_{n}^{i}  Z_{n - j(0) +M_{0} } o)\end{array}\right)\\
&\ge k\min \Big\{ d(Z_{i} o, Z_{n-j} o) : 0 \le i, j \le \epsilon n \Big\} - E_{0}.
\end{aligned}
\]
By dividing by $k$ and taking the limit, we conclude that \[
\tau(Z_{n}) \ge \min \Big\{ d(Z_{i} o, Z_{n-j} o) : 0 \le i, j \le \epsilon n \Big\}
\] outside $A_{n}''$. Now, thanks to Theorem \ref{thm:expBdDisp}, there exists $K>0$ such that \[
\Prob[d(o, Z_{m} o) \le L' m] \le K e^{-m/K}
 \]
 holds for all $m$. This implies \begin{equation}\label{eqn:tauBd2}\begin{aligned}
 \Prob \left(\tau(Z_{n}) \le Ln\right) \le & \Prob(A_{n}'') + \Prob[\min \left\{d(Z_{i} o, Z_{n-j} o) : 0 \le i, j \le \epsilon n\right\}  \le Ln]\\
 & \le \Prob(A_{n}'') + \sum_{0 \le i, j \le \epsilon n} \Prob[d(Z_{i} o, Z_{n-j} o) \le Ln] \\
 &\le \Prob(A_{n}'') + (\epsilon n)^{2} \cdot K e^{-(1-2\epsilon) n/K}
 \end{aligned}
 \end{equation}
 for large enough $n$, which decays exponentially. 
 
 Meanwhile, outside $A_{n}''$, \[
 (\ldots, \quad Z_{n}^{k-1} \gamma_{0}, Z_{n}^{k-1} \check{\gamma}_{1}, Z_{n}^{k-1} \gamma_{1}, Z_{n}^{k-1} \check{\gamma}_{0}, Z_{n}^{k} \gamma_{0}, Z_{n}^{k} \check{\gamma}_{1}, \ldots)
 \]
is a $D_{1}$-aligned sequence of Schottky axes. Since $d(\gamma, \gamma')$ is uniformly bounded for $\gamma, \gamma' \in \{\gamma_{0}, \check{\gamma}_{0}, \gamma_{1}, \check{\gamma}_{1}, Z_{n}\gamma_{0}\}$, Lemma \ref{lem:BGIPConcat} tells us that the concatenation of sequences $(Z_{n}^{k}\gamma_{0},  Z_{n}^{k} \check{\gamma}_{1}, Z_{n}^{k} \gamma_{1}, Z_{n}^{k} \check{\gamma}_{0})_{k \in \Z}$ is a contracting axis. Since this axis and the orbit $\{Z_{n}^{k} o\}_{k \in \Z}$ are fellow traveling, $Z_{n}$ is also contracting. Hence we have $\Prob(\textrm{$Z_{n}$ is not contracting}) \le \Prob(A_{n}'')$, which decays exponentially.
\end{proof}

The previous proof did not assume that the initial displacement $d(o, Z_{i} o)$ or the final displacement $d(Z_{n-j} o, Z_{n} o)$ of a random path is shorter than the middle one $d(Z_{i}, Z_{n-j} o)$; indeed, one cannot expect such a phenomenon for high probability if the random walk has no moment condition. Instead, the proof explicitly used the fact that the middle segment will catch up the escape rate regardless of the moment condition, which is proven using the pivoting technique.

In order to discuss the converse of CLT for general measures, one should perform the pivoting more explicitly. For this purpose, we will recall the basics of the pivotal time construction in \cite{choi2022random1}.

\section{The second method: pivoting technique}\label{section:secondMethod}

\subsection{Pivotal times and pivoting}\label{subsection:pivotDiscrete}

This subsection is a summary of results in \cite[Subsection 5.1]{choi2022random1}, which is an adaptation of Gou{\"e}zel's work in \cite[Subsection 5.A]{gouezel2022exponential}; for complete proofs, refer to the explanation there. 

We keep employing Convention \ref{conv:main} for the metric space $X$ with basepoint $o$ and the isometry group $G$. We fix a fairly long $K_{0}$-Schottky set $S \subseteq G^{M_{0}}$ with cardinality $N_{0}$ for some $K_{0}$, $M_{0}$ and $N_{0} > 400$. Recall that we have associated an isometry and a contracting axis for each $\alpha \in S$: \[
\Pi(\alpha) := a_{1} a_{2}\cdots a_{n}, \quad \Gamma(\alpha) :=\big(o, \,a_{1} o,\, a_{1} a_{2} o, \,\ldots, \,\Pi(\alpha) o\big).
\]

We first fix sequences of isometries $(w_{i})_{i=0}^{\infty}$, $(v_{i})_{i=1}^{\infty}$ in $G$. Then we draw a sequence of Schottky sequences \[\begin{aligned}
\mathbf{s} &= (\alpha_{1}, \beta_{1}, \gamma_{1}, \delta_{1}, \ldots, \alpha_{n}, \beta_{n}, \gamma_{n}, \delta_{n}) \in S^{4n},
\end{aligned}
\] with respect to the uniform measure on $S^{4n}$ and construct words \[
W_{k} := w_{0} \Pi(\alpha_{1}) \Pi(\beta_{1}) v_{1} \Pi(\gamma_{1}) \Pi(\delta_{1}) w_{1} \cdots \Pi(\alpha_{k}) \Pi(\beta_{k}) v_{k} \Pi(\gamma_{k}) \Pi(\delta_{k})w_{k}. 
\]

Following \cite{gouezel2022exponential}, we constructed a set  $P_{n} = P_{n}(\mathbf{s}; (w_{i})_{i=0}^{n}, (v_{i})_{i=1}^{n}) \subseteq \{1, \ldots, n\}$, called the \emph{set of pivotal times}, that satisfy the properties described in Lemma \ref{lem:extremal}, \ref{lem:pivotEquiv} and \ref{lem:pivotDistbn}. First, pivotal times are (partial) recording of those Schottky axes that are aligned along the eventual progress.

\begin{lem}[{\cite[Lemma 5.3]{choi2022random1}}]\label{lem:extremal}
Let $\mathbf{s} \in S^{4n}$, $(w_{i})_{i=0}^{n} \in G^{n+1}$ and $(v_{i})_{i=1}^{n} \in G^{n}$, Let us enumerate the elements of $P_{n}(\mathbf{s}; (w_{i})_{i=0}^{n}, (v_{i})_{i=1}^{n})$ as $\{i(1) < \ldots < i(m)\}$ and define \[\begin{aligned}
\gamma_{4k-3} &:= W_{i(k)-1}\cdot  \Gamma(\alpha_{i(k)}),\\ \gamma_{4k-2} &:= W_{i(k)-1} \Pi(\alpha_{i(k)}) \cdot \Gamma(\beta_{i(k)}), \\ \gamma_{4k-1} &:= W_{i(k)-1} \Pi(\alpha_{i(k)}) \Pi(\beta_{i(k)}) v_{i(k)} \cdot \Gamma(\gamma_{i(k)}),\\
\gamma_{4k} &:= W_{i(k)-1} \Pi(\alpha_{i(k)}) \Pi(\beta_{i(k)}) v_{i(k)} \Pi(\gamma_{i(k)})\cdot \Gamma(\delta_{i(k)}).
\end{aligned}
\]
for $k=1, \ldots, m$. Then $\left(o, \gamma_{1}, \gamma_{2}, \ldots, \gamma_{4m}, W_{m}o\right)$ is $D_{0}$-semi-aligned.
\end{lem}

Another important feature of the pivotal times is that we are allowed to change our choices of $\beta_{i}$'s, $\gamma_{i}$'s and $v_{i}$'s at the pivotal times. To formulate, we define a subset $\tilde{S}$ of $S^{2} \times G$:

\begin{definition}[{\cite[Lemma 5.5]{choi2022random1}}]\label{dfn:pivotingChoices}
We define \[
\tilde{S} := \Big\{(\beta, \gamma, v) \in S^{2} \times G : \textrm{$\big(\Gamma(\beta), \Pi(\beta)v \Pi(\gamma) o\big)$ and $\big(v^{-1} o, \Gamma(\gamma)\big)$ are $K_{0}$-aligned sequences}\Big\}.
\]
Furthermore, we define for each $v \in G$ its section: \[
\tilde{S}(v) := \Big\{\big(\beta, \gamma\big) \in S^{2}: \textrm{$\big(\Gamma(\beta), \Pi(\beta)v \Pi(\gamma) o\big)$ and $\big(v^{-1} o, \Gamma(\gamma)\big)$ are $K_{0}$-aligned sequences}\Big\}.
\]
\end{definition}

Motivated by \cite[Lemma 4.6]{gouezel2022exponential}, we observed in \cite{choi2022random1} that $\# \tilde{S}(v) \ge (\#S)^{2} - 2\#S$ for each $v \in G$. The role of $\tilde{S}$ is captured by the following.

\begin{lem}[{\cite[Lemma 5.5]{choi2022random1}}] \label{lem:pivotEquiv}
Let $\mathbf{s} =  (\alpha_{1}, \beta_{1}, \gamma_{1}, \delta_{1}, \ldots, \alpha_{n}, \beta_{n}, \gamma_{n}, \delta_{n})$ be a choice drawn from $S^{4n}$ and $\mathbf{w} = (w_{i})_{i=0}^{n}, \mathbf{v} = (v_{i})_{i=1}^{n}$ be auxiliary sequences in $G$.

Let $i \in P_{n}(\mathbf{s}; \mathbf{w}, \mathbf{v})$ and let ($\bar{\mathbf{s}}$; $\mathbf{w}, \bar{\mathbf{v}})$ be obtained from $(\mathbf{s}; \mathbf{w}, \mathbf{v})$ by replacing $(\beta_{i}, \gamma_{i}, v_{i})$ with some $(\bar{\beta}_{i}, \bar{\gamma}_{i}, \bar{v}_{i})$ such that $(\bar{\beta}_{i}, \bar{\gamma}_{i}, \bar{v}_{i}) \in \tilde{S}$. 

Then  $P_{l}(\mathbf{s}; \mathbf{w}, \mathbf{v}) = P_{l}(\bar{\mathbf{s}}; \mathbf{w}, \bar{\mathbf{v}})$ for any $1 \le l \le n$. 
\end{lem}

Given $\mathbf{s}, \bar{\mathbf{s}}\in S^{4n}$ and $G$-valued sequences $\mathbf{w}, \mathbf{v}$ and $\bar{\mathbf{v}}$, we say that $(\bar{\mathbf{s}}; \mathbf{w}, \bar{\mathbf{v}})$ is \emph{pivoted from} $(\mathbf{s}; \mathbf{w}, \mathbf{v})$ if: \begin{itemize}
\item $\alpha_{i} = \bar{\alpha}_{i}$, $\delta_{i} = \bar{\delta}_{i}$ for all $i \in \{1, \ldots, n\}$;
\item $(\bar{\beta}_{i}, \bar{\gamma}_{i}, \bar{v}_{i}) \in \tilde{S}$ for each $i \in P_{n}(\mathbf{s}; \mathbf{w}, \mathbf{v})$, and 
\item $(\beta_{i}, \gamma_{i}, v_{i})= (\bar{\beta}_{i}, \bar{\gamma}_{i}, \bar{v}_{i})$ for each $i \in \{1, \ldots, n\} \setminus  P_{n}(\mathbf{s}; \mathbf{w}, \mathbf{v})$. 
\end{itemize}
By Lemma \ref{lem:pivotEquiv}, being pivoted from each other is an equivalence relation. Using this fact, we proved in \cite{choi2022random1} that the set of pivotal times grows linearly to the step number in probability.

\begin{lem}[{\cite[Corollary 5.8]{choi2022random1}}]\label{lem:pivotDistbn}
Fix $\mathbf{w} \in G^{n+1}$ and $\mathbf{v} \in G^{n}$. When $\mathbf{s} = (\alpha_{i}, \beta_{i}, \gamma_{i}, \delta_{i})_{i=1}^{n}$ is chosen from $S^{4n}$ with the uniform measure, $\# P_{n}(\mathbf{s})$ is greater in distribution than the sum of $n$ i.i.d. $X_{i}$, whose distribution is\begin{equation}\label{eqn:expRV}
\Prob(X_{i}=j) = \left\{\begin{array}{cc} (N_{0} - 4)/N_{0} & \textrm{if}\,\, j=1,\\ (N_{0} - 4)4^{-j}/N_{0}^{-j+1}& \textrm{if}\,\, j < 0, \\ 0 & \textrm{otherwise.}\end{array}\right.
\end{equation}
\end{lem}

\begin{cor}\label{cor:pivotDistbn} In the setting of Lemma \ref{lem:pivotDistbn}, for each $n$ we have \[
\Prob(\# P_{n}(\mathbf{s}) \le n/2) \le \big(3 \sqrt[4]{4/N_{0}}\big)^{n}.
\]
\end{cor}

\begin{proof}
This is a consequence of Chernoff-Hoeffding type inequality (\cite[Theorem 1]{hoeffding}). We provide here a version of proof using Chebyshev's inequality for convenience.

Let $X_{i}$'s be the RVs in the conclusion of Lemma \ref{lem:pivotDistbn}. By Equation \ref{eqn:expRV}, we have \[\begin{aligned}
\E \left[ \sqrt{4/N_{0}}^{X_{i}} \right] &= \left(1-\frac{4}{N_{0}}\right) \cdot \left[\sqrt{\frac{4}{N_{0}} }+ \sum_{j=1}^{\infty} \sqrt{\frac{N_{0}}{4}}^{j} \cdot \left( \frac{4}{N_{0}} \right)^{j} \right] \\
&= \left(1-\frac{4}{N_{0}}\right) \sqrt{\frac{4}{N_{0}}} \left( 1 + \frac{1}{1-\sqrt{4/N_{0}}} \right) \\
&= 2 \sqrt{4/N_{0}} +  \sqrt{4/N_{0}}^{2} -  \sqrt{4/N_{0}}^{3} \le 3 \sqrt{4/N_{0}}.
\end{aligned}
\]
Here, the last inequality is due to the fact $\sqrt{4/N_{0}} \le 1$. Now using the conclusion of Lemma \ref{lem:pivotDistbn} and the independence of $X_{i}$'s, we deduce \[
\E \left[ \sqrt{4/N_{0}}^{\#P_{n}(\mathbf{s})} \right] \le \E \left[ \sqrt{4/N_{0}}^{\sum_{i=1}^{n} X_{i}}\right] = \prod_{i=1}^{n} \E \left[ \sqrt{4/N_{0}}^{X_{i}} \right] \le \big(3 \sqrt{4/N_{0}}\big)^{n}.
\]
By Chebyshev's inequality, we also have \[
\E \left[ \sqrt{4/N_{0}}^{\#P_{n}(\mathbf{s})} \right] \ge\Prob(\# P_{n}(\mathbf{s}) \le n/2) \cdot \sqrt{4/N_{0}}^{n/2}.
\]
Combining these two inequalities leads to the desired conclusion.
\end{proof}

\subsection{Reduction for random walks}\label{subsection:pivotReduce}

In this subsection, we modify random walks on $G$ into suitable combinatorial model and then apply pivoting technique. As a result, we obtain a random path which is an alternation of fixed subsegments and i.i.d. random subsegments, all aligned on $X$.

\begin{lem}\label{lem:pivotFirstReduce}
Let $\mu$ be a non-elementary probability measure on $G$ and let $S \subseteq G^{M_{0}}$ be a fairly long $K_{0}$-Schottky set for $\mu$. Then for each $n$ there exist an integer $m(n)$, a probability space $\Omega_{n}$, a measurable subset $B_{n} \subseteq \Omega_{n}$, a measurable partition $\mathcal{Q}_{n}$ of $B_{n}$, and random variables \[\begin{aligned}
Z & \in G,\\
\{w_{i}, i=0, \ldots, m(n)\} &\in G^{m(n)+1}, \\
\{v_{i} : i=1, \ldots, m(n)\} &\in G^{m(n)},\\
 \{\alpha_{i}, \beta_{i}, \gamma_{i}, \delta_{i} : i=1, \ldots, m(n)\} &\in S^{4m(n)}
\end{aligned}
\]
such that the following hold: \begin{enumerate}
\item $\lim_{n \rightarrow +\infty} \Prob(B_{n}) = 1$ and $\lim_{n \rightarrow +\infty} m(n)/n>0$.
\item On each equivalence class $\mathcal{F} \in \mathcal{Q}_{n}$, $(w_{i})_{i=0}^{m(n)}$ are constant and $(\alpha_{i}, \beta_{i}, \gamma_{i}, \delta_{i}, v_{i})_{i=1}^{m(n)}$ are i.i.d.s distributed according to $(\textrm{uniform measure on $S^{4}$}) \times \mu$.
\item $Z$ is distributed according to $\mu^{\ast n}$ on $\Omega_{n}$ and \[
Z = w_{0} \Pi(\alpha_{1}) \Pi(\beta_{1}) v_{1} \Pi(\gamma_{1}) \Pi(\delta_{1}) w_{1} \cdots \Pi(\alpha_{m(n)}) \Pi(\beta_{m(n)}) v_{m(n)} \Pi(\gamma_{m(n)}) \Pi(\delta_{m(n)}) w_{m(n)}
\]
holds on $A_{n}$.
\end{enumerate}
\end{lem}

\begin{proof}
Let us denote the uniform measure on $S$ by $\mu_{S}$. Since $S$ is a fairly long Schottky set for $\mu$, $\mu_{S}$ is absolutely continuous with respect to $\mu$. Consequently, there exists $0<p<1$ that admits a decomposition \[
\mu^{4M_{0} + 1} = p \left( \mu_{S}^{2} \times \mu \times \mu_{S}^{2} \right) + (1-p) \nu
\]
for some (nonnegative) probability measure $\nu$. Let $n' = \lfloor n/(4M_{0}+1) \rfloor$.

We consider Bernoulli RVs $(\rho_{i})_{i=0}^{\infty}$ with average $p$, $(\eta_{i})_{i=0}^{\infty}$ with the law $\mu_{S}^{2}\times \mu \times \mu_{S}^{2}$ and $(\nu_{i})_{i=0}^{\infty}$ with the law $\nu$, all independent, and define \[
(g_{(4M_{0}+1)k+1}, \,\ldots, \,g_{(4M_{0}+1)(k+1)}) = \left\{\begin{array}{cc} \nu_{k} & \textrm{when}\,\, \rho_{k} = 0, \\ \eta_{k} & \textrm{when} \,\, \rho_{k} = 1. \end{array}\right. \quad (k=0, \ldots, n'-1).
\]
Let $g_{(4M_{0} + 1) n' + 1}, \ldots, g_{n}$ be i.i.d.s distributed according to $\mu$ that are also independent from $(\rho_{i}, \nu_{i}, \eta_{i})_{i=0}^{n'-1}$. Then $(g_{1}, \ldots, g_{n})$ is distributed according to $\mu^{n}$. We denote by $\Omega_{n}$ the ambient probability space on which $(\rho_{i}, \nu_{i}, \eta_{i})_{i}, (g_{i})_{i}$ are all measurable. We  set $Z = g_{1} \cdots g_{n}$, whose distribution on $\Omega_{n}$ is $\mu^{\ast n}$.

Recall that $\rho_{i}$'s are i.i.d. Bernoulli RVs with average $p$. Chernoff-Hoeffding's inequality implies that $\Prob(\sum_{i=0}^{n'-1} \rho_{i} \le \epsilon n)$ decays exponentially  for each $0 < \epsilon < \frac{p}{4M_{0} + 1}$. Considering this, we define  $m(n) := \lfloor p n / 8M_{0} \rfloor$ and \[
B_{n} := \left\{\w \in \Omega_{n} : \sum_{i=1}^{n'-1} \rho_{i} \ge m(n) \right\}.
\] The first item of the conclusion follows.

For $\w \in B_{n}$, we collect those indices $i$ at which $\rho_{i} = 1$. Then we denote the $m(n)$ smallest ones among them, in the increasing order, by $\vartheta(1), \ldots, \vartheta(m(n))$. In other words, $\{i : \rho_{i}(\w) = 1\} = \{\vartheta(1) < \vartheta(2) < \ldots < \vartheta(m(n)) < \ldots\}$. We now define $\mathcal{Q}_{n}$ of $B_{n}$ to be the partition determined by the values of $\{\rho_{i}, \nu_{i} : i \ge 0\}$, and define \[\begin{aligned}
w_{i-1} &:= g_{(4M_{0}+1)[\stopping(i-1) + 1] + 1} \cdots g_{(4M_{0} +1) \stopping(i)} \quad \left(i=1, \ldots,  m(n)\right),\\
w_{m(n)} &:= g_{(4M_{0}+1)(\stopping (m(n)) + 1) + 1} \cdots g_{n},\\
\alpha_{i} &:= (g_{(4M_{0} +1)\stopping(i) + 1}, \,\ldots, \,g_{(4M_{0} +1) \stopping(i) +M_{0}} ),\\
\beta_{i} &:= (g_{(4M_{0} +1) \stopping(i) + M_{0}+1}, \,\ldots, \,g_{(4M_{0} +1) \stopping(i) +2M_{0}} ),\\
v_{i} &:= g_{(4M_{0} + 1) \stopping (i) + 2M_{0} + 1},\\
\gamma_{i} &:= (g_{(4M_{0} +1) \stopping(i) + 2M_{0}+2}, \,\ldots, \,g_{(4M_{0} +1) \stopping(i) +3M_{0}+1} ),\\
\delta_{i} &:= (g_{(4M_{0} +1)\stopping(i) + 3M_{0}+2}, \,\ldots, \,g_{(4M_{0} +1) \stopping(i) +4M_{0}+1} ).
\end{aligned}
\]

On each equivalence class $\mathcal{F}$ of $\mathcal{Q}_{n}$, $w_{i}$'s are constants (since these are determined by the values of $\{\rho_{i}, \nu_{i} : i \ge 0\}$) and $(\alpha_{i}, \beta_{i}, v_{i}, \gamma_{i}, \delta_{i})$ are i.i.d.s with distribution $\mu_{S}^{2} \times \mu \times \mu_{S}^{2}$. Moreover, on $A_{n}$ we have \[\begin{aligned}
g_{1} \cdots g_{n} &= \prod_{i=1}^{m(n)} (g_{(4M_{0} + 1) [\stopping (i-1) + 1] + 1} \cdots g_{(4M_{0} + 1) \stopping (i)}) \cdot (g_{(4M_{0} + 1) \stopping (i) + 1} \cdots g_{(4M_{0} + 1) (\stopping(i) + 1)} ) \\
&\cdot g_{(4M_{0} + 1) (\stopping (m(n)) + 1 ) + 1} \cdots g_{n} \\
&= 
w_{0} \Pi(\alpha_{1}) \Pi(\beta_{1}) v_{1} \Pi(\gamma_{1}) \Pi(\delta_{1}) w_{1} \cdots \Pi(\alpha_{m(n)}) \Pi(\beta_{m(n)}) v_{m(n)} \Pi(\gamma_{m(n)}) \Pi(\delta_{m(n)}) w_{m(n)}.
\end{aligned}
\]
This is the third item of the conclusion and the proof is completed.
\end{proof}

Before the next reduction, we need the following definition.

\begin{definition}\label{dfn:wForPivot}
Let $n>0$ and $K>0$. We say that a sequence $(w_{i})_{i=0}^{n}$ in $G$ is \emph{$K$-pre-aligned} if, for the isometries $W_{0} := w_{0}$ and \[
V_{k} := W_{k} \Gamma(\beta_{k+1}) v_{k+1}, \,\, W_{k+1} := V_{k} \Gamma(\gamma_{k+1}) w_{k+1} \quad (k=0, \ldots, n-1),
\]
the sequence \[
\Big(o, \,W_{0} \Gamma(\beta_{1}),\, V_{0} \Gamma(\gamma_{1}),\, \ldots, \,W_{n-1}\Gamma(\beta_{n}), \,V_{n-1} \Gamma(\gamma_{n}), \,W_{n} o\Big)
\]
is $K$-semi-aligned for any choices of $(\beta_{i}, \gamma_{i}, v_{i}) \in \tilde{S}$ ($i=1, \ldots, n$).

We say that an isometry $\phi \in G$ is \emph{$K$-pre-aligned} if \[
\Big(\Gamma(\gamma'),\,\, \Pi(\gamma') \phi \cdot \Gamma(\beta) \Big)
\]
is $K$-semi-aligned for any choices of $(\beta, \gamma, v), (\beta', \gamma', v') \in \tilde{S}$.
\end{definition}

We now describe the alignment of Schottky axes at the pivotal times.

\begin{lem}\label{lem:pivotSecondReduce}
For each integer $n$, the following holds for $m(n) := 2 \lfloor 0.25 n \rfloor$.

Let $\mu$ be a probability measure on $G$, let $S \subseteq G^{M_{0}}$ be a fairly long $K_{0}$-Schottky set with cardinality $N_{0} \ge 400$. Fix a sequence $(w_{i})_{i =0}^{n}$ in $G$, and let $(\alpha_{i}, \beta_{i}, \gamma_{i}, \delta_{i}, v_{i})_{i =1}^{n}$ be i.i.d.s on $(S^{4} \times G)^{n}$, distributed according to $(\textrm{uniform measure on $S$})^{4} \times \mu$. Then there exist a measurable subset $B_{n}' \subseteq (S^{4} \times G)^{\Z_{>0}}$, a measurable partition $\mathcal{Q}_{n}'$ of $B_{n}'$ and RVs \[\begin{aligned}
\{w_{i}', i=0, \ldots, m(n)\} &\in G^{m(n)+1}, \\
\{v_{i}' : i=1, \ldots, m(n)\} &\in G^{m(n)},\\
 \{\beta_{i}', \gamma_{i}' : i=1, \ldots, m(n)\} &\in S^{2m(n)}
\end{aligned}
\]
such that the following hold: \begin{enumerate}
\item $\Prob(B_{n}') \ge 1- \big(3 \cdot \sqrt[4]{4/N_{0}}\big)^{n}$.
\item On each $\mathcal{F}' \in \mathcal{Q}_{n}'$, $(w_{i}')_{i=0}^{m(n)}$ is a fixed $D_{0}$-pre-aligned sequence in $G$ and $(\beta_{i}', \gamma_{i}', v_{i}')_{i=1}^{m(n)}$ are i.i.d.s distributed according to $(\textrm{uniform measure on $S^{2}$}) \times \mu$ conditioned on $\tilde{S}$.
\item On $B_{n}'$ we have the equality \[\begin{aligned}
& w_{0}' \Pi(\beta_{1}') v_{1}'\Pi(\gamma_{1}') w_{1}' \cdots \Pi(\beta_{m(n)}') v_{m(n)}'\Pi(\gamma_{m(n)}') w_{m(n)}'\\
&=w_{0} \Pi(\alpha_{1}) \Pi(\beta_{1}) v_{1}\Pi(\gamma_{1}) \Pi(\delta_{1}) w_{1} \cdots \Pi(\alpha_{n}) \Pi(\beta_{n}) v_{n}\Pi(\gamma_{n}) \Pi(\delta_{n}) w_{n}.
\end{aligned}
\]
\end{enumerate}
\end{lem}

\begin{proof}
Following the discussion in Subsection \ref{subsection:pivotDiscrete}, we define \begin{equation}\label{eqn:pivotalTimeModel1}\begin{aligned}
P_{n}(\w) &= P_{n} \big( (\alpha_{i}, \beta_{i}, \gamma_{i}, \delta_{i})_{i=1}^{n}; (w_{i})_{i=0}^{n}, (v_{i})_{i=1}^{n}\big).
\end{aligned}
\end{equation}
We take $m(n) =2\cdot \lfloor 0.25n \rfloor$ and let $B_{n}' := \{ \w\in \Omega : \# P_{n}(\w) \ge n/2\}$. By Corollary \ref{cor:pivotDistbn}, we have \[
\Prob(B_{n}') \ge 1 - \left(3 \cdot \sqrt[4]{4/N_{0}}\right)^{n} \ge 1 - (3 /\sqrt{10})^{n}.
\]

We now define \[
W_{k} := w_{0} \Pi(\alpha_{1}) \Pi(\beta_{1}) v_{1} \Pi(\gamma_{1}) \Pi(\delta_{1}) w_{1} \cdots\Pi(\alpha_{k}) \Pi(\beta_{k}) v_{k} \Pi(\gamma_{k}) \Pi(\delta_{k})w_{k} \quad (k=0, \ldots, n).
\]
Given $\w \in B_{n}'$ with $P_{n}(\w) = \{i(1) < i(2) < \ldots < i(m(n)) < \ldots\}$, we define \[
\begin{aligned}
w_{0}' &:= W_{i(1) - 1} \Pi(\alpha_{i(1)}), \quad w_{m(n)}' := \Pi(\delta_{i(m(n))}) w_{i(m(n))} \cdot W_{i(m(n))}^{-1} \cdot W_{n}, \\
w_{k}' &:=  \Pi(\delta_{i(k)}) w_{i(k)} \cdot W_{i(k)}^{-1} \cdot W_{i(k+1) - 1} \Pi(\alpha_{i(1)}) \quad (k=1, \ldots, m(n)-1).
\end{aligned}
\]
Furthermore, we record the Schottky axes at the pivotal times: we define $\beta_{k}' := \beta_{i(k)}$, $v_{k}' := v_{i(k)}$ and $\gamma_{k}' := \gamma_{i(k)}$ for $k=1, \ldots, m(n)$. We then observe the following relation: \[
W_{i(k+1)-1} \Pi(\alpha_{i(k+1)}) = W_{i(k)-1} \Pi(\alpha_{i(k)})\cdot \Pi(\beta_{i(k)}) v_{i(k)}\cdot \Pi(\gamma_{i(k)}) w_{k}'.
\]
By induction, we observe that \begin{equation}\label{eqn:reparamWk}\begin{aligned}
w_{0}' \Pi(\beta_{1}') v_{1}' \Pi(\gamma_{1}') w_{1}' \cdots \Pi(\beta_{k}') v_{k}' \Pi(\gamma_{k}') w_{k}' = W_{i(k+1) -  1} \Pi(\alpha_{i(k+1)}) \quad (k=0, \ldots, m(n)-1), \\
w_{0}' \Pi(\beta_{1}') v_{1}' \Pi(\gamma_{1}') w_{1}' \cdots \Pi(\beta_{m(n)}') v_{m(n)}' \Pi(\gamma_{m(n)}') w_{m(n)}' = W_{n} \quad \quad \quad\quad\quad\quad\,\,
\end{aligned}
\end{equation}on $B_{n}'$, settling Item 3 of the conclusion. 

We define the measurable partition $\mathcal{Q}_{n}'$ of $B_{n}'$ by pivoting at the first $m(n)$ pivotal times. More explicitly, two elements $\w, \w' \in B_{n}'$ belong to the same equivalence class if and only if:
\[\begin{aligned}
\alpha_{i}'(\w) = \alpha_{i}'(\w'), \,\,\delta_{i}' (\w)&= \delta_{i}'(\w'), w_{i}'(\w) = w_{i}'(\w') & (i \in \{0, \ldots, n\}),\\
P_{n}(\w) = P_{n} (\w') &=: \{i(1) < i(2) < \ldots\}, &\\
(\beta_{i}'(\w), \gamma_{i}'(\w), v_{i}'(\w)) &\in \tilde{S} & \left(i \in \{i(1), \ldots, i(m(n))\}\right), \\
(\beta_{i}'(\w), \gamma_{i}'(\w), v_{i}'(\w)) & = (\beta_{i}'(\w'), \gamma_{i}'(\w'), v_{i}'(\w')) & \left(i \in \{1, \ldots, n\} \setminus\{i(1), \ldots, i(m(n))\} \right).
\end{aligned}
\]
Lemma \ref{lem:pivotEquiv} describes the structure of the equivalence class in $\mathcal{Q}_{n}'$. To elaborate, let us fix an equivalence class $\mathcal{F}' \in \mathcal{Q}_{n}'$. Then $\mathcal{F}'$ is associated with a set of indices $P_{n}(\mathcal{F}')= \{i(1) < \ldots < i(m(n)) < \ldots\}$. Moreover, an element $\w \in (S^{4} \times G)^{n}$ belongs to $\mathcal{F}'$ if and only if the following holds:
\begin{enumerate}[label=(\Roman*)]
\item the following isometries/Schottky sequences coincide with the fixed ones associated to $\mathcal{F}'$.
 \[\begin{array}{c}
\big(w_{0}'(\w), \alpha_{1}'(\w), \beta_{1}'(\w), \ldots, w_{i(1)-1}'(\w), \alpha_{i(1)}'(\w)\big), \\\big(\delta_{i(1)}'(\w), w_{i(1)}' (\w), \ldots, w_{i(2)-1}'(\w), \alpha_{i(2)}'(\w) \big), \\
\vdots, \\
\big(\delta_{i(m(n)-1)}'(\w), w_{i(m(n)-1)}' (\w), \ldots, w_{i(m(n))-1}'(\w), \alpha_{i(m(n))}'(\w) \big), \\
\big(\delta_{i(m(n))}'(\w), w_{i(m(n))}' (\w), \ldots, \delta_{n}'(\w),w_{n}'(\w) \big).
\end{array}
\]
\item $\big(\beta_{i(k)}' (\w), \gamma_{i(k)}'(\w), v_{i(k)}'(\w)\big) \in \tilde{S}$ for $k=1, \ldots, m(n)$.
\end{enumerate}
Moreover, for all $\w \in \mathcal{F}'$ we have $P_{n}(\w) = P_{n}(\mathcal{F}')$.

Item (I) says that $(w_{k}')_{k=0}^{m(n)}$ are constant on $\mathcal{F}'$. Item (II) says that, on each $\mathcal{F}'$, the RVs $(\beta_{k}', \gamma_{k}', v_{k}')_{k=1}^{m(n)}$ are $\big(\beta_{i(k)}, \gamma_{i(k)}, v_{i(k)}\big)$'s restricted on the set $\big\{ \big(\beta_{i(k)}, \gamma_{i(k)}, v_{i(k)}\big) \in \tilde{S}\big\}$. Hence, $\big(\beta_{i(k)}, \gamma_{i(k)}, v_{i(k)}\big)_{k=1}^{m(n)}$ conditioned on $\mathcal{F}'$ are i.i.d.s whose distribution is the restriction of $(\textrm{uniform measure on $S^{2}$})\times \mu$. This establishes Item (3) of the conclusion.

Next, Lemma \ref{lem:extremal} tells us that
\[
\left(\begin{array}{c} o, \,W_{i(1)-1} \Gamma(\alpha_{i(1)}), \,
W_{i(1)-1} \Pi(\alpha_{i(1)}) \Gamma(\beta_{i(1)}), \,W_{i(1)-1} \Pi(\alpha_{i(1)})\Pi(\beta_{i(1)}) v_{i(1)} \Gamma(\gamma_{i(1)}), \\
W_{i(1)-1} \Pi(\alpha_{i(1)})\Pi(\beta_{i(1)}) v_{i(1)}\Pi(\gamma_{i(1)}) \Gamma(\delta_{i(1)}), \\
W_{i(2)} \Gamma(\alpha_{i(2)}), \,
W_{i(2)-1} \Pi(\alpha_{i(2)}) \Gamma(\beta_{i(2)}), \,W_{i(2)-1} \Pi(\alpha_{i(2)})\Pi(\beta_{i(2)}) v_{i(2)} \Gamma(\gamma_{i(2)}), \\
W_{i(2)-1} \Pi(\alpha_{i(2)})\Pi(\beta_{i(2)}) v_{i(2)}\Pi(\gamma_{i(2)}) \Gamma(\delta_{i(2)}), \\
\vdots \\
W_{i(m(n))} \Gamma(\alpha_{i(m(n))}), \,
W_{i(m(n))-1} \Pi(\alpha_{i(m(n))}) \Gamma(\beta_{i(m(n))}), \\
W_{i(m(n))-1} \Pi(\alpha_{i(1)})\Pi(\beta_{i(m(n))}) v_{i(1)} \Gamma(\gamma_{i(m(n))}), \\
W_{i(m(n))-1} \Pi(\alpha_{i(m(n))})\Pi(\beta_{i(m(n))}) v_{i(m(n))}\Pi(\gamma_{i(m(n))}) \Gamma(\delta_{i(m(n))}), \,\ldots, \,W_{n} o\end{array}\right)
\]
is $D_{0}$-semi-aligned on $\mathcal{F}'$. Consequently, for each $\mathcal{F}' \in \mathcal{Q}_{n}'$ with the associated $\big(w_{i}'(\mathcal{F})\big)_{i=0}^{m(n)}$, and for each $\w \in \mathcal{F}'$ that determines $(\beta_{i}'(\w), \gamma_{i}'(\w), v_{i}'(\w))_{i=1}^{m(n)}$, \[
\Big (o, W_{0}' \Gamma(\beta_{1}'), V_{0}'\Gamma(\gamma_{1}'), \ldots, W_{m(n)-1}' \Gamma(\beta_{m(n)}'), V_{m(n)-1}'\Gamma(\gamma_{m(n)}'), W_{m(n)} o\Big)
\]
is $D_{0}$-semi-aligned for $W_{0}' := w_{0}$ and \[
V_{i}' := W_{i}' \Gamma(\beta_{i+1}') v_{i+1}', \quad W_{i+1}':=V_{i}' \Gamma(\gamma_{i+1}') w_{i+1}'  \quad \big(i=0, \ldots, m(n) - 1 \big).
\]
By Item (II), $(\beta_{i}', \gamma_{i}', v_{i}')$ ranges all over $\tilde{S}$ for each $i=1, \ldots, m(n)$. This implies that for each $\mathcal{F}'$,  $\big(w_{i}'(\mathcal{F})\big)_{i=0}^{m(n)}$ is $D_{0}$-pre-aligned. Since $\mathcal{B}_{n}'$ is composed of these equivalence classes, we can conclude Item (2) of the conclusion.\end{proof}

\subsection{Pivoting and self-repulsion} \label{subsection:pivot2}

We discuss the heart of this paper, the pivoting for translation length. The ingredient to begin with is a $D_{0}$-pre-aligned seqeuence $(w_{i})_{i=0}^{n}$ in $G$. This means that for every choice of $(\beta_{i}, \gamma_{i}, v_{i}) \in \tilde{S}$ for $i=1, \ldots, n$ the sequence \[
\big(o, W_{0} \Gamma(\beta_{1}), V_{0} \Gamma(\gamma_{1}), \ldots, W_{n-1} \Gamma(\beta_{n}), V_{n-1} \Gamma(\gamma_{n}), W_{n} o\big)
\]
is $D_{0}$-semi-aligned, where $W_{0} := w_{0}$ and \begin{equation}\label{eqn:vkWkNotat}
V_{k} := W_{k} \Pi(\beta_{k+1})v_{k+1}, \,\, W_{k+1} := V_{k} \Pi(\gamma_{k+1}) w_{k+1} \quad (k=0, \ldots, n-1).
\end{equation}
Our aim is to show that $W_{n}$ is contracting whose axis is fellow traveling with $W_{k} \Gamma(\beta_{k})$ and $V_{k} \Gamma(\gamma_{k})$ for intermediate $k$'s. For this let us define \[
\begin{aligned}
\phi_{k} &= \phi_{k} \big( (w_{i})_{i=0}^{n}, (\beta_{i}, \gamma_{i}, v_{i})_{i=1}^{n} \big) \\
&:= \left(V_{n-k} \Pi(\gamma_{n-k+1}) \right)^{-1} W_{n} W_{k-1} \\
&= w_{n-k+1} \Pi (\beta_{n-k+2}) v_{n-k+2} \Pi(\gamma_{n-k+2})w_{n-k+2} \cdots \Pi(\beta_{n}) v_{n} \Pi(\gamma_{n})w_{n} \\
& \quad \cdot w_{0} \Pi(\beta_{1}) v_{1} \Pi(\gamma_{1})w_{1} \cdots  \Pi(\beta_{k-1}) v_{k-1} \Pi(\gamma_{k-1})w_{k-1}.
\end{aligned}
\]
Then $\phi_{k}$ depends only on $(w_{l}, \beta_{l}, \gamma_{l}, v_{l})$'s for $1 \le l \le k-1$ and $n-k+2 \le l \le n$, together with $w_{0}, w_{n-k+1}$. For $1 \le k \le n/2$ we define\[
\begin{aligned}
S_{k}^{\ast} &:= \left\{ (\beta_{k}, \gamma_{k}) \in \tilde{S}(v_{k}) : \textrm{$\big( W_{n}^{-1} V_{n-k} \Pi(\gamma_{n-k+1}) o, W_{k-1} \Gamma(\beta_{k}) \big)$ is $K_{0}$-aligned}\right\} \\
&= \left\{ (\beta_{k}, \gamma_{k}) \in \tilde{S}(v_{k}) : \textrm{$\big( \phi_{k}^{-1} o, \Gamma(\beta_{k}) \big)$ is $K_{0}$-aligned}\right\}, \\
S_{n-k+1}^{\ast} &:= \left\{ (\beta_{n-k+1}, \gamma_{n-k+1}) \in \tilde{S}(v_{n-k+1}) : \textrm{$\big( W_{n}^{-1} V_{n-k} \Gamma(\gamma_{n-k+1}), W_{k-1} \Pi(\beta_{k})o \big)$ is $K_{0}$-aligned} \right\} \\
&= \left\{ (\beta_{n-k+1}, \gamma_{n-k+1}) \in \tilde{S}(v_{n-k+1}) : \textrm{$\big( \Gamma(\gamma_{n-k+1}), \Pi(\gamma_{n-k+1}) \phi_{k} \Pi(\beta_{k}) o \big)$ is $K_{0}$-aligned} \right\}.
\end{aligned}
\]

These sets are devised to capture the match between displacement and translation length.

\begin{lem}\label{lem:selfRepulse}
Let $1 \le l \le k \le n/2$, let $(w_{i})_{i = 0}^{n}$ be a $D_{0}$-pre-aligned sequence in $G$ and let $(\beta_{i}, \gamma_{i}, v_{i})_{i=1, \ldots, l, n-l+1, \ldots, n} \in \tilde{S}^{2l}$ be choices such that \begin{equation}\label{eqn:selfRepulse}
(\beta_{l}, \gamma_{l}) \in S_{l}^{\ast}\,\,\textrm{and}\,\, (\beta_{n-l+1}, \gamma_{n-l+1}) \in S_{n-l+1}^{\ast}.
\end{equation}
Then for any additional choice $(\beta_{i}, \gamma_{i}, v_{i})_{i=l+1, \ldots, k, n-k+1, \ldots, n-l} \in \tilde{S}^{2(k-l)}$, the isometry \[\begin{aligned}
\phi_{k+1} &:= w_{n-k} \Pi(\beta_{n-k+1}) v_{n-k+1} \Pi(\gamma_{n-k+1}) w_{n-2k+1} \cdots \Pi(\beta_{n}) v_{n} \Pi(\gamma_{n}) w_{n}\\
& \cdot w_{0} \Pi(\beta_{1}) v_{1} \Pi(\gamma_{1}) w_{1}\cdots  \Pi(\beta_{k}) v_{k} \Pi(\gamma_{k})w_{k}
\end{aligned}
\]
is a $D_{0}$-pre-aligned isometry.
\end{lem}

\begin{proof}
Pick an arbitrary further choices $(\beta_{i}, \gamma_{i}, v_{i}) \in \tilde{S}$ for $i=k+1, \ldots, n-k$. Recall again the notation in Display \ref{eqn:vkWkNotat} (with $W_{0} := w_{0}$). For notational purpose, let \[
\kappa_{2i-1} = W_{i-1} \Gamma(\beta_{i}), \kappa_{2i} = V_{i-1} \Gamma(\gamma_{i}) (i=1, \ldots, n).
\]
Since $(w_{i})_{i=0}^{n}$ is $D_{0}$-pre-aligned sequence, any subsequence of \[
(o, \kappa_{1}, \ldots, \kappa_{2n}, W_{n} o)
\]
is $D_{0}$-semi-aligned. 

Meanwhile, note that $W_{n}^{-1} V_{n-l} \Pi(\gamma_{n-l+1}) o$ is the ending point of $W_{n}^{-1} V_{n-l} \Gamma(\gamma_{n-l+1}) = W_{n}^{-1} \kappa_{2(n-l)+2}$, while $W_{l-1} \Pi(\beta_{l}) o$ is the ending point of $W_{l-1} \Gamma(\beta_{l}) o = \gamma_{2l-1}$. Since we are assuming Condition \ref{eqn:selfRepulse}, Lemma \ref{lem:1segment} tells us that $(W_{n}^{-1} \kappa_{2(n-l)+2}, \kappa_{2l-1})$ is $D_{0}$-aligned.

Overall, we observe that \[
(W_{n}^{-1} \kappa_{2(n-k)}, \ldots, W_{n}^{-1} \kappa_{2(n-l)+2},\kappa_{2l-1}, \ldots, \kappa_{2k+1})
\]
is $D_{0}$-semi-aligned. This implies that \[
(W_{n}^{-1} \kappa_{2(n-k)}, \kappa_{2k-1}) = \Big( \big(W_{n}^{-1} V_{n-k-1}\big) \cdot \Gamma(\gamma_{n-k}), \big(W_{n}^{-1} V_{n-1}\big) \cdot \Pi(\gamma_{n-k}) \phi_{k+1} \cdot \Gamma(\beta_{k+1}) \Big)
\] is $D_{0}$-semi-aligned. Also recall that the choices of $(\beta_{k+1}, \gamma_{k+1}, v_{k+1})$ and $(\beta_{n-k}, \gamma_{n-k}, v_{n-k})$ in $\tilde{S}$ were arbitrary. Hence, we conclude that $\phi_{k}$ is $D_{0}$-semi-aligned.
\end{proof}

\begin{lem}\label{lem:selfRepulse2}
Let $1 \le k \le n/2$, let $(w_{i})_{i = 0}^{n}$ be a $D_{0}$-pre-aligned sequence in $G$ and let $(\beta_{i}, \gamma_{i}, v_{i})_{i=1}^{n} \in \tilde{S}^{n}$ be choices such that $\phi_{k}$ is a $D_{0}$-pre-aligned isometry. Then $W_{n}$ is a contracting isometry, and moreover, 
$(\kappa_{i})_{i \in \mathbb{Z}}$ is $D_{0}$-semi-aligned for the Schottky axes \[\begin{aligned}
& \big(\kappa_{2(n - k+1) t + 1}, \, \kappa_{2(n-k+1)t+2}, \, \ldots, \kappa_{2(n-k+1)(t+1) - 1}, \, \kappa_{2(n-k+1)(t+1)}\big)\\
&:=  \big(W_{n}^{t} W_{k-1} \Gamma(\beta_{k}), \,W_{n}^{t} W_{k-1} \Gamma(\gamma_{k}), \,\ldots, \,W_{n}^{t} W_{n-k} \Gamma(\beta_{n-k+1}), \,W_{n}^{t} V_{n-k} \Gamma(\gamma_{n-k+1}) \big) \,\, (t \in \Z).
\end{aligned}
\]
\end{lem}

\begin{proof}
Since $\phi_{k}$ is $D_{0}$-pre-aligned, we have that \[\big( W_{n}^{-1} V_{n-k} \Pi(\gamma_{n-k+1}) o, W_{k-1} \Gamma(\beta_{k}) \big), \,\,\big( W_{n}^{-1} V_{n-k} \Gamma(\gamma_{n-k+1}), W_{k-1} \Pi(\beta_{k})o \big)
\]
is $D_{0}$-semi-aligned, as well as $(\kappa_{2(n-k+1)t - 1}, \kappa_{2(n-k+1) t})$ for $t \in \Z$.

Moreover, since $(w_{i})_{i=0}^{n}$ is also $D_{0}$-pre-aligned, we have that \[
\big(W_{k-1} \Gamma(\beta_{k}), V_{k-1} \Gamma(\gamma_{k}), \ldots, W_{n-k} \Gamma(\beta_{n-k+1}), V_{n-k} \Gamma(\gamma_{n-k+1})\big)
\]
is $D_{0}$-semi-aligned. This implies that $(\kappa_{2(n-k+1) t + 1}, \ldots, \kappa_{2(n-k+1) (t+1)})$ is $D_{0}$-aligned for each $t \in \Z$. Combining these facts, we conclude that $(\kappa_{i})_{i \in \Z}$ is $D_{0}$-semi-aligned.

Since Schottky axes are $K_{0}$-contracting and sufficiently long (as described in Definition \ref{dfn:Schottky}), we can apply Lemma \ref{lem:BGIPConcat} to conclude that the concatenation of $\{\kappa_{2(n-k+1) l}\}_{l \in \Z}$ is a contracting axis. Since the orbit $\{w^{l} o\}_{l\in \Z}$ is fellow traveling with this concatenation, $w$ is contracting as desired.
\end{proof}

We now estimate the probability for the event described in Lemma \ref{lem:selfRepulse}.

\begin{lem}\label{lem:trLengthPivotProb}
Fix $(w_{i})_{i=0}^{n} \in G^{n+1}$ and let $1 \le k \le n/2$. Picking $(\beta_{i}, \gamma_{i}, v_{i})_{i=1}^{n}$ from $\tilde{S}^{n}$, the condition \begin{equation}\label{eqn:trLengthCond}
(\beta_{l}, \gamma_{l}) \in \tilde{S}_{l}^{\ast}\,\, \textrm{and} \,\,(\beta_{n-l+1}, \gamma_{n-l+1}) \in \tilde{S}_{n-l+1}^{\ast} \,\,\textrm{for some $1 \le l \le k$}
\end{equation}
depends only on $\{(\beta_{l}, \gamma_{l}, v_{l}) : l=1, \ldots, k, n-k+1, \ldots, n\}$.

Moreover, fixing any choice of $\{v_{l} : l=1, \ldots, k, n-k+1, \ldots, n\}$, the condition holds for probability at least $1-(6\#S)^{k}$ with respect to the uniform measure on $\tilde{S}(v_{1}) \times \cdots \times \tilde{S}(v_{k}) \times \tilde{S}_{n-k+1}(v_{n-k+1}) \times \cdots \times \tilde{S}(v_{n})$.
\end{lem}

\begin{proof}
Recall that $\phi_{1}, \ldots, \phi_{k}$ depend only on $(\beta_{i}, \gamma_{i}, v_{i})$'s for $i=1, \ldots, k-1, n-k+2, \ldots, n$. Moreover, Condition \ref{eqn:trLengthCond} involves $\phi_{1}, \ldots, \phi_{k}$ and $(\beta_{i}, \gamma_{i})$ for $i=1, \ldots, k, n-k+1, \ldots, n$. Combining these leads to the first assertion.

Before proving the second assertion, we observe the following claim:

\begin{claim}\label{claim:trLengthPivotProb}
\begin{enumerate}
\item For each $1 \le l \le n/2$ and for each choices of \[\begin{aligned}
(\beta_{1}, \gamma_{1}, \ldots, \beta_{l-1}, \gamma_{l-1}, \beta_{n-l+2}, \gamma_{n-l+2}, \ldots, \beta_{n}, \gamma_{n}) &\in S^{4l-4},\\
(v_{1}, \ldots, v_{l}, v_{n-l+1}, \ldots, v_{n}) &\in G^{2l},
\end{aligned}
\] the following holds with respect to the uniform measure on $\tilde{S}(v_{l})$:\[
\Prob\left( (\beta_{l}, \gamma_{l})\in \tilde{S}(v_{l}) : \left(\phi_{l}^{-1} o, \Gamma(\beta_{l})\right) \,\textrm{is $K_{0}$-aligned}\right) \ge 1 - 3/\#S.
\]
\item For each $1 \le l \le n/2$ and for each choice of\[\begin{aligned}
(\beta_{1}, \gamma_{1}, \ldots, \beta_{l}, \gamma_{l}, \beta_{n-l+2}, \gamma_{n-l+2}, \ldots, \beta_{n}, \gamma_{n}) &\in S^{4l-2},\\
(v_{1}, \ldots, v_{l}, v_{n-l+1}, \ldots, v_{n}) &\in G^{2l},
\end{aligned}
\]the following holds with respect to the uniform measure on $\tilde{S}(v_{n-l+1})$: \[
\Prob\left( \begin{array}{c}(\beta_{n-l+1}, \gamma_{n-l+1})\in \tilde{S}(v_{n-l+1}) : \\\big(\Gamma(\gamma_{n-l+1}), \,\Pi(\gamma_{n-l+1}) \phi_{l} \Pi(\beta_{l}) o \big)\,\textrm{is $K_{0}$-aligned}\end{array} \right) \ge 1-3/\#S.
\]
\end{enumerate}
\end{claim}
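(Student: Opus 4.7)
The plan is to deduce both parts directly from Schottky condition (4) of Definition \ref{dfn:Schottky}, together with the lower bound $\#\tilde{S}(v) \ge (\#S)^{2} - 2\#S$ recorded just after Definition \ref{dfn:pivotingChoices}. The key observation is that, once all the variables being conditioned upon are fixed, each event in question depends on only one of the two coordinates among $\beta_{l}, \gamma_{l}$ (resp.\ $\beta_{n-l+1}, \gamma_{n-l+1}$), so the Schottky defect manifests as a failure probability of order $1/\#S$ rather than of order $1$.

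For Part (1), I would first point out that $\phi_{l}$ depends only on $w_{0}, w_{n-l+1}$ and on those $(\beta_{i}, \gamma_{i}, v_{i})$ with $i \in \{1,\ldots,l-1\} \cup \{n-l+2,\ldots,n\}$, all of which are fixed by the conditioning. Therefore $\phi_{l}^{-1} o$ is a specific point of $X$, and the condition ``$(\phi_{l}^{-1} o, \Gamma(\beta_{l}))$ is $K_{0}$-aligned'' depends only on the first coordinate $\beta_{l}$. By Schottky condition (4), at most one $\beta \in S$ can fail this condition against the fixed point $\phi_{l}^{-1}o$. Hence at most $\#S$ pairs $(\beta,\gamma) \in S^{2}$ are bad, so the number of bad pairs in $\tilde{S}(v_{l})$ is at most $\#S$, giving a uniform failure probability of at most $\#S / [(\#S)^{2} - 2\#S] = 1/(\#S-2) \le 3/\#S$ (which is valid whenever $\#S \ge 3$, hence in particular for $\#S \ge N_{0} \ge 400$).

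Part (2) is entirely parallel, this time with the $\gamma$-coordinate playing the distinguished role. Once $(\beta_{l}, \gamma_{l})$ has additionally been conditioned upon, the point $x := \phi_{l} \Pi(\beta_{l}) o$ is determined, and the condition ``$(\Gamma(\gamma_{n-l+1}), \Pi(\gamma_{n-l+1}) x)$ is $K_{0}$-aligned'' depends only on $\gamma_{n-l+1}$. Applying Schottky condition (4) to $x$ cuts out at most one bad $\gamma \in S$, and the same counting inside $\tilde{S}(v_{n-l+1})$ yields the $3/\#S$ bound.

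The only real care needed --- and the nearest thing to an obstacle --- is ensuring the ``fixed point'' framing is valid: one must verify that the explicit product formula for $\phi_{l}$ indeed avoids the indices $l$ and $n-l+1$, so that nothing in the conditioning accidentally entangles the randomness of $(\beta_{l}, \gamma_{l})$ or $(\beta_{n-l+1}, \gamma_{n-l+1})$ with $\phi_{l}$. This is immediate from the formula displayed at the start of this subsection, but worth flagging for rigor; once checked, both estimates collapse to the two-line counting argument above.
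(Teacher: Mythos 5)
Your proposal is correct and follows essentially the same argument as the paper: both reduce each item to the observation that $\phi_{l}$ is determined by the conditioned variables, invoke Schottky property (4) to exclude at most one bad $\beta_{l}$ (resp.\ $\gamma_{n-l+1}$), and divide the resulting count of at most $\#S$ bad pairs by $\#\tilde{S}(v) \ge (\#S)^{2} - 2\#S$.
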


\begin{proof}[Proof of Claim \ref{claim:trLengthPivotProb}]
Recall that $\phi_{l}$ is determined by $(\beta_{t}, \gamma_{t}, v_{t})$'s for $t =1, \ldots, l-1, n-l+2, \ldots, n$. With this in mind, the property of the Schottky set $S$ (cf. Definition \ref{dfn:Schottky}) implies that the condition in Item (1) is violated by at most 1 choice of $\beta_{l} \in S$; for any other choice of $\beta_{l}$ and for any choice of $\gamma_{l}$, the condition holds. Hence, at most $\#S$ choices in $S^{2} \supseteq \tilde{S}(v_{l})$ violate the condition. Meanwhile, since $\#\tilde{S}(v_{l}) \ge (\#S)^{2} - 2\#S$ holds for any $v_{l}$, we conclude that \[
\Prob\left( (\beta_{l}, \gamma_{l})\in \tilde{S}(v_{l}) : \left(\phi_{l}^{-1} o, \Gamma(\beta_{l})\right) \,\textrm{is not $K_{0}$-aligned}\right) \le \frac{\#S}{(\#S)^{2} - 2\#S} \le \frac{3\#S}{(\#S)^{2}}.
\]
We similarly deduce Item (2), and Claim \ref{claim:trLengthPivotProb} is now established.
\end{proof}

We now prove the second assertion. For this, we first fix $\{v_{l} : l=1, \ldots, k, n-k+1, \ldots, n\}$ and define \[
\mathcal{A}_{k} := \left\{ (\beta_{l}, \gamma_{l})_{l=1}^{n}: \begin{array}{c}(\beta_{l}, \gamma_{l}) \notin S_{l}^{\ast}\,\textrm{or}\, (\beta_{n-l + 1}, \gamma_{n-l+1}) \notin S_{n-l+1}^{\ast}  \\
 \textrm{for each $1 \le l \le k$}\end{array}\right\}.
\]
Then $\mathcal{A}_{k} \subseteq \mathcal{A}_{k+1}$ holds for each $k$. Further, the first assertion implies that the membership of $(\beta_{i}, \gamma_{i})_{i=1}^{n}$ in $\mathcal{A}_{k}$ depends only on $(\beta_{i}, \gamma_{i})$ for $i=1, \ldots, k, n-k+1, \ldots, n$. Finally, for each choice of $(\beta_{i}, \gamma_{i})_{i=1, \ldots, k-1, n-k+2, \ldots, n}$ that makes $(\beta_{i}, \gamma_{i})_{i=1}^{n}$ belong to $\mathcal{A}_{k-1}$, Claim \ref{claim:trLengthPivotProb} implies that  \[
\Prob \left( \mathcal{A}_{k} \, \Big| \, \beta_{1}, \gamma_{1}, \ldots, \beta_{k-1}, \gamma_{k-1}, \beta_{n-k+2},\gamma_{n-k+2}, \ldots, \beta_{n}, \gamma_{n} \right) \le 6/\#S.
\]
Summing up these conditional probabilities, we deduce that $\Prob(\mathcal{A}_{k}) \le (6/\# S) \cdot \Prob(\mathcal{A}_{k-1})$. This leads to the conclusion of the lemma.
\end{proof}

\subsection{Converse of CLT} \label{subsection:CLTConverse}

In this subsection, we prove the converse of CLT. We first assemble Lemma \ref{lem:pivotFirstReduce}, \ref{lem:pivotSecondReduce} \ref{lem:selfRepulse} and \ref{lem:trLengthPivotProb}.

\begin{cor}\label{cor:pivotCorCombined}
Let $\mu$ be a non-elementary probability measure on $G$ and let $S$ be a fairly long $K_{0}$-Schottky set for $\mu$ with cardinality at least $400$. Then for each $n$ there exist $m(n) \in \{2^{s} : s > 0\}$, a probability space $\Omega_{n}$ with a measurable subset $A_{n} \subseteq \Omega_{n}$, a measurable partition $\mathcal{P}_{n}$ of $A_{n}$ and RVs \[
\begin{aligned}
Z_{n} &\in G, \\
\{w_{i} : i = 0, \ldots, m(n)\} &\in G^{m(n) + 1}, \\
\{v_{i} : i = 1, \ldots, m(n)\} &\in G^{m(n)}, \\
\{\beta_{i}, \gamma_{i} : i = 1, \ldots, m(n)\} &\in S^{2m(n)}
\end{aligned}
\]
such that the following hold: \begin{enumerate}
\item $\lim_{n \rightarrow +\infty} \Prob(A_{n}) = 1$ and $\lim_{n \rightarrow +\infty} m(n) / n > 0$.
\item On $A_{n}$, $(w_{i})_{i=0}^{m(n)}$ is a $D_{0}$-pre-aligned sequence in $G$ and $w_{m(n)}^{-1} w_{0}$ is a $D_{0}$-pre-aligned isometry.
\item On each equivalence class $\mathcal{E} \in \mathcal{P}_{n}$, $(w_{i})_{i=0}^{m(n)}$ are constant and $(\beta_{i}, \gamma_{i}, v_{i})_{i=1}^{m(n)}$ are i.i.d.s distributed according to the measure $\left(\textrm{uniform measure on $S^{2}$}\right) \times \mu$ conditioned on $\tilde{S}$.
\item $Z_{n}$ is distributed according to $\mu^{\ast n}$ on $\Omega_{n}$ and \[
Z_{n} = w_{0} \Pi(\beta_{1}) v_{1} \Pi(\gamma_{1}) w_{1} \cdots \Pi(\beta_{m(n)})v_{m(n)} \Pi(\gamma_{m(n)}) w_{m(n)}
\]holds on $A_{n}$.
\end{enumerate}
\end{cor}

\begin{proof}
Given $n>0$, Lemma \ref{lem:pivotFirstReduce} provides an integer $N_{1} = m(n)$, a probability space $(\Omega_{n}, \Prob)$ with a subset $B=B_{n}$, a measurable partition $\mathcal{Q}_{n} = \{\mathcal{F}_{\xi}\}_{\xi}$ of $B_{n}$, random variables $Z_{n} \in G$, $(w_{i})_{i=0}^{N_{1}}\in G^{N_{1}+1}$, $(v_{i}'')_{i=1}^{N_{1}} \in G^{N_{1}}$ and $(\alpha_{i}, \beta_{i}, \gamma_{i}, \delta_{i})_{i=1}^{N_{1}} \in S^{4N_{1}}$ such that the following holds: \begin{enumerate}
\item $\lim_{n} \Prob(B) = 1$ and $\lim_{n \rightarrow +\infty} N_{1}/n > 0$.
\item On each equivalence class $\mathcal{G}_{\gamma} \in \mathcal{R}$, $(w_{i})_{i}$ are constant and $(\alpha_{i}, \beta_{i}, \gamma_{i}, \delta_{i}, v_{i})_{i}$ are i.i.d.s distributed according to $(\textrm{uniform measure on $S^{4}$}) \times \mu$.
\item $Z_{n}$ is distributed according to $\mu^{\ast n}$ on $\Omega_{n}$ and \[
Z_{n} = w_{0} \Pi(\alpha_{1}) \Pi(\beta_{1})v_{1} \Pi(\gamma_{1}) \Pi(\delta_{1}) w_{1} \cdots \Pi(\alpha_{N_{1}}) \Pi(\beta_{N_{1}}) v_{N_{1}} \Pi(\gamma_{N_{1}}) \Pi(\delta_{N_{1}}) w_{N_{1}}
\]
holds on $B = \cup_{\xi} \mathcal{\mathcal{F}}_{\xi}$.
\end{enumerate}

Next, we take $N_{2}(n) :=  2\lfloor 0.25 N_{1} \rfloor$ as in Lemma \ref{lem:pivotSecondReduce}, with input $N_{1}$ in place of $n$. Since $N_{1}$ grows linearly, $\lim_{n} N_{2}/n > 0$ also holds. We now endow each $\mathcal{F}_{\xi} \in \mathcal{Q}_{n}$ with the conditional measure $\Prob|_{\mathcal{F}_{\xi}}$ of $\Prob$. 

We note that $(w_{i}, v_{i}, \alpha_{i}, \beta_{i}, \gamma_{i}, \delta_{i})_{i}$ satisfy the assumption of Lemma \ref{lem:pivotSecondReduce} on each $\mathcal{F}_{\xi} \in \mathcal{Q}_{n}$. For each $\mathcal{F}_{\xi} \in \mathcal{Q}_{n}$, Lemma \ref{lem:pivotSecondReduce} then provides a measurable subset $B_{\xi}' \subseteq \mathcal{F}_{\xi}$, a measurable partition $\mathcal{Q}_{\xi}' = \{\mathcal{F}_{\zeta;\xi}'\}_{\zeta}$ of $B_{\xi}'$, and RVs $(w_{i}')_{i=0}^{N_{2}}, (v_{i}', \beta_{i}', \gamma_{i}')_{i=1}^{N_{2}}$ on $\mathcal{F}_{\xi}$ such that the following holds: \begin{enumerate}
\item  $\Prob\big(B_{\xi}'\big|\mathcal{F}_{\xi}\big) \ge 1-\big(3 \sqrt[4]{4/N_{0}}\big)^{N_{1}} \ge 1-\big(3/\sqrt{10}\big)^{N_{1}}$.
\item On $B_{\xi}$ we have\[\begin{aligned}
Z_{n}&=w_{0} \Pi(\alpha_{1}) \Pi(\beta_{1}) v_{1}\Pi(\gamma_{1}) \Pi(\delta_{1}) w_{1} \cdots \Pi(\alpha_{N_{1}}) \Pi(\beta_{N_{1}}) v_{N_{1}}\Pi(\gamma_{N_{1}}) \Pi(\delta_{N_{1}}) w_{N_{1}}\\
&= w_{0}' \Pi(\beta_{1}') v_{1}'\Pi(\gamma_{1}') w_{1}' \cdots \Pi(\beta_{N_{2}}') v_{N_{2}}'\Pi(\gamma_{N_{2}}') w_{N_{2}}'.
\end{aligned}
\]
\item On each $\mathcal{F}_{\zeta;\xi}' \in \mathcal{Q}_{\xi}'$, $(w_{i}')_{i=0}^{N_{2}}$ is a fixed $D_{0}$-pre-aligned sequence in $G$ and $(\beta_{i}', \gamma_{i}', v_{i}')_{i=1}^{N_{2}}$ are i.i.d.s distributed according to the restriction of $(\textrm{uniform measure on $S^{2}$}) \times \mu$ to $\tilde{S}$.
\end{enumerate}
We then take $B' := \sqcup_{\xi} B_{\xi}'$ and $\mathcal{Q}'_{n} := \sqcup_{\xi} \mathcal{Q}_{\xi}'$. Note that \[
\Prob(B') = \sum_{\xi} \Prob\big(B_{\xi}' \big|\mathcal{F}_{\xi}\big) \cdot \Prob(\mathcal{F}_{\xi}) \ge \big(1-(3/\sqrt{10})^{N_{1}}\big) \cdot \Prob(B),
\]
and the RHS converges to $1$ as $n$ tends to infinity; hence $\lim_{n} \Prob(B') = 1$.

We now set $m(n) = 2^{ \lfloor \log_{2} N_{2} \rfloor - 1}$. Note that $m(n)$ is a power of $2$ and $N_{2} - m(n)$ is even. Moreover, we have $m(n) \ge N_{2}/4$ and $N_{2} - m(n) \ge N_{2}/2$. 

We now consider a refinement $\mathcal{Q}''_{n}$ of $\mathcal{Q}'_{n}$ by first fixing the values of the RVs\[
\Big\{
v_{i}' : i = 1, 2, \ldots, 0.5(N_{2} - m(n)), N_{2} - 0.5(N_{2} - m(n))+ 1, \ldots,N_{2}-1, N_{2}\Big\}.
\]
 We also consider a refinement $\mathcal{P}_{n}$ of $\mathcal{Q}''_{n}$ by further fixing the values of the RVs \[
 \Big\{\beta_{i}', \gamma_{i}': i = 1,2, \ldots, 0.5(N_{2} - m(n)), N_{2} - 0.5(N_{2} - m(n))+ 1, \ldots, N_{2}-1, N_{2} \Big\}.
 \] We consider the following property: \[
P = ``(\beta_{l}', \gamma_{l}') \in \tilde{S}_{l}^{\ast}\,\, \textrm{and} \,\,(\beta_{N_{2} - l+1}', \gamma_{N_{2} -l + 1}') \in \tilde{S}_{N_{2} - l + 1}^{\ast} \,\,\textrm{for some $1 \le l \le 0.5(N_{2} - m(n))$}".
\]
The first part of Lemma \ref{lem:trLengthPivotProb} implies that, for each $\mathcal{E} \in \mathcal{P}_{n}$, either $P$ holds on the entire $\mathcal{E}$ or $P$ does not hold on the entire $\mathcal{E}$. The second part of Lemma \ref{lem:trLengthPivotProb} tells us that $\Prob(P | \mathcal{F}'') \ge 1 - (6/N_{0})^{0.5(N_{2} - m(n))} \ge 1 - (6/N_{0})^{N_{2}/4}$ for each $\mathcal{F}'' \in \mathcal{Q}_{n}''$. If we take \[
A_{n} := \bigcup \{ \mathcal{E}\in\mathcal{P} : \textrm{$P$ holds on $\mathcal{E}$}\},
\]
then we get \[
\Prob(A_{n}) \ge \sum_{\mathcal{F}'' \in \mathcal{Q}_{n}''} \Prob(P | \mathcal{F}'') \cdot \Prob(\mathcal{F}'') \ge (1 - (6/N_{0})^{N_{2}/4}) \cdot \Prob(B_{n}'),
\]
which converges to 1 as $n$ tends to infinity. Item (1) of the conclusion is now proven.
By abuse of notation, from now on, we let $\mathcal{P}_{n}$ be the restriction of $\mathcal{P}_{n}$ on $A_{n}$. 

Now, on $A_{n}$ we define RVs\[
\begin{aligned}
w_{0}'' &:=  w_{0}' \Pi(\beta_{1}') v_{1}' \Pi(\gamma_{1}') w_{1}' \cdots  \Pi(\beta_{k}') v_{k}' \Pi(\gamma_{k}')w_{k}', \\
w_{m(n)}'' &:= w_{N_{2}-k} '\Pi(\beta_{N_{2} - k + 1}') v_{N_{2} - k+1}' \Pi(\gamma_{N_{2} - k+ 1}') w_{N_{2} - k+ 1}' \cdots  \Pi(\beta_{N_{2}}') v_{N_{2}}' \Pi(\gamma_{N_{2}}') w_{N_{2}}', \\
(v_{i}'', \beta_{i}'', \gamma_{i}'')_{i=1}^{m(n)} &:= (v_{i + k}', \beta_{i+k}', \gamma_{i+k}')_{i=1}^{m(n)}, \\
(w_{i}'')_{i=1}^{m(n)-1}& := (w_{i+k}')_{i=1}^{m(n)-1} \quad \left(k := \frac{N_{2} - m(n)}{2} \right).
\end{aligned}
\]
Our goal is to prove that $(w_{i}'')_{i=0}^{m(n)}$ and $(\beta_{i}'', \gamma_{i}'', v_{i}'')_{i=1}^{m(n)}$ (in place of $(w_{i})_{i=0}^{m(n)}$ and $(\beta_{i}, \gamma_{i}, v_{i})_{i=1}^{m(n)}$) satisfy the desired conclusion. First, we have \[\begin{aligned}
&w_{0}'' \Pi(\beta_{1}'') v_{1}''\Pi(\gamma_{1}'') w_{1}'' \cdots \Pi(\beta_{m(n)}'') v_{m(n)}''\Pi(\gamma_{m(n)}'') w_{m(n)}''\\
&=
w_{0}' \Pi(\beta_{1}') v_{1}'\Pi(\gamma_{1}') w_{1}' \cdots \Pi(\beta_{N_{2}}') v_{N_{2}}'\Pi(\gamma_{N_{2}}') w_{N_{2}}'
\end{aligned}
\]
holds on $A_{n}$ by definition, and the RHS equals $Z_{n}$ on $B_{n}$. This establishes Item (4) of the conclusion.

By definition of $\mathcal{P}_{n}$, $(w_{i}'')_{i=0}^{m(n)}$ are constant and $(\beta_{i}'', \gamma_{i}'', v_{i}'')_{i=1}^{m(n)}$ are i.i.d.s distributed according to $(\textrm{uniform measure on $S^{2}$}) \times \mu$ on each $\mathcal{E}_{\alpha} \in \mathcal{P}$. Item (3) of the conclusion follows.

Meanwhile, pick an arbitrary $\w \in A_{n}$, which determines $(w_{i}')_{i=0}^{N_{2}}$ and $(w_{i}'')_{i=0}^{m(n)}$. Since $\w$ belongs to $B_{n}'$, $(w_{i}')_{i=1}^{N_{2}}$ is a $D_{0}$-pre-aligned sequence. This in turn implies that $(w_{i}'')_{i=0}^{m(n)}$ is also $D_{0}$-pre-aligned. Moreover, Lemma \ref{lem:selfRepulse} implies that \[
(w_{m(n)}'')^{-1} \cdot w_{0}'' =\Big(\textrm{$\phi_{k+1}$ for $(w_{i}')_{i=0}^{N_{2}}$ and $(\beta_{i}', \gamma_{i}', \delta_{i}')_{i=1}^{N_{2}}$}\Big)
\] is a $D_{0}$-pre-aligned isometry. Item (2) of the conclusion is now established and the proof is complete.
\end{proof}

We now investigate the displacement and the translation length of $Z_{n}$ in Corollary \ref{cor:pivotCorCombined}.

\begin{lem}\label{lem:alignedGromov}
Let $S$ be a fairly long $K_{0}$-Schottky set for $\mu$ with cardinality at least $400$, and fix a $D_{0}$-pre-aligned sequence $(w_{i})_{i=0}^{n}$ in $G$. Given $(\beta_{i}, \gamma_{i}, v_{i}) \in \tilde{S}$ for $i=1, \ldots, n$, define $W_{0}:= w_{0}$ and \[
\begin{aligned}
V_{k} &:= W_{k} \Gamma(\beta_{k+1}) v_{k+1}, \,\, W_{k+1} := V_{k} \Gamma(\gamma_{k+1}) w_{k+1}& (k=0, \ldots, n-1), \\
(x_{2k-1}, x_{2k}) &:= \big(W_{k-1} o, \,\,V_{k-1} \Pi(\gamma_{k}) o\big) & (k=1, \ldots, n), \\
x_{0} &:= 0, \,\,x_{2n+1} := W_{n} o.&
\end{aligned}
\]
Let $\mu$ be a probability measure on $G$ with infinite second moment, and let $(\beta_{i}, \gamma_{i}, v_{i})_{i=1}^{n}$ be i.i.d.s distributed according to $(\textrm{uniform measure on $S^{2}$}) \times \mu$ conditioned on $\tilde{S}$. Then the following holds:\begin{enumerate}
\item $\{d(x_{2l-1}, x_{2l}):l=1, \ldots, n\}$ are i.i.d.s with infinite second moment, whose distribution depends on $S$ and $\mu$ but is independent of $w_{i}$'s.
\item $\{d(x_{2l}, x_{2l+1}) : l = 0, \ldots, n\}$ are constant RVs;
\item for each $0 \le i \le j \le k \le 2n+ 1$, $(x_{i}, x_{k})_{x_{j}}$ is bounded by $E_{0}$.
\item for each $0 \le i \le j \le k \le i' \le j' \le k' \le 2n+1$, $(x_{i}, x_{k})_{x_{j}}$ and $(x_{i'}, x_{k'})_{x_{j'}}$ are independent.
\end{enumerate}
\end{lem}

\begin{proof}
Note that $[x_{2l-1}, x_{2l}]$'s are translates of $[o, \Pi(\beta_{l}) v_{l} \Pi(\gamma_{l}) o]$'s, which are i.i.d.s. 

We denote by $\mu_{S}$ the uniform measure on $S$. Recall that $\#\tilde{S}(v) \ge (\#S)^{2} - 2\#S$ holds for each $v\in G$. As a consequence, for each $g \in G$ we have\[
0.995\cdot\mu(g) \le \frac{(\#S)^{2} - 2\#S}{(\#S)^{2}} \mu(g) \le (\mu_{S}^{2} \times \mu) \big(\{(\beta, \gamma, v) \in \tilde{S} : v= g\} \big) \le \mu(g).
\]
Also, we have \[
(\mu_{S}^{2} \times \mu) (\tilde{S}) \ge \frac{(\#S)^{2} - 2\#S}{(\#S)^{2}} \le0.995.
\]
Hence, if we denote by $\mu'$ the common distribution of $v_{i}$'s, then  \[
0.99 \mu(g) \le \mu'(g) \le 1.01 \mu(g)
\]
holds for each $g \in G$. Since $\mu$ is assumed to have infinite second moment, $\mu'$ also has infinite second moment. Now note that \[\begin{aligned}
|d(x_{2l-1}, x_{2l}) - d(o, v_{l} o)| &= \left|d\big(\Pi(\beta_{l})^{-1} o, v_{l} \Pi(\gamma_{l}) o\big) - d(o, v_{l} o) \right|\\
& \le 2\max\{d(o, \Pi(\mathbf{s}) o) : \mathbf{s} \in S\}.
\end{aligned}
\]
This implies that $d(x_{2l-1}, x_{2l})$ also has infinite second moment. 

Next, recall that $(w_{i})_{i=0}^{n}$ is a fixed $D_{0}$-pre-aligned sequence. Hence,  $d(x_{2l}, x_{2l+1}) = d(o, w_{l})$ is constant. Moreover, the sequence \begin{equation}\label{eqn:alignedGromov}
(o, W_{0} \Gamma(\beta_{1}), V_{0} \Gamma(\gamma_{1}), \ldots, W_{n-1} \Gamma(\beta_{n}), V_{n-1} \Gamma(\gamma_{n-1}), W_{n} o)
\end{equation}
is $D_{0}$-semi-aligned for any choices of $(\beta_{i}, \gamma_{i}, v_{i})$'s. Also, note that $(x_{2l-1}, x_{2l})$ are the beginning and ending points of the $i$-th Schottky axes in Display \ref{eqn:alignedGromov}. By Lemma \ref{lem:BGIPWitness}, there exist points $y_{1}, \ldots, y_{2n}$ on $[x_{0}, x_{2n+1}]$, from left to right, so that $d(x_{i}, y_{i}) \le 0.1E_{0}$ for $i=1, \ldots, n$. It follows that \[
(x_{i}, x_{k})_{x_{j}} \le (y_{i}, y_{k})_{y_{j}} + 0.15E_{0} < E_{0} \quad (i\le j \le k).
\]

Finally, for any $k \in \{0, 1,\ldots, 2n+1\}$, the RVs $\{d(x_{i}, x_{k}) : i \le k\}$ depend only on the choices of $\{(\beta_{l}, \gamma_{l}, v_{l}) : l \le k/2\}$, while  $\{d(x_{i}, x_{k}) : i \ge k\}$ depend on the choices of $\{(\beta_{l}, \gamma_{l}, v_{l}) : l > k/2\}$. It follows that the two collections of RVs are independent, and this settles item (4) of the claim.
\end{proof}

\begin{lem}
\label{lem:alignedGromovTrans}
Let $S$ be a fairly long $K_{0}$-Schottky set for $\mu$ with cardinality at least $400$, and fix a $D_{0}$-pre-aligned sequence $(w_{i})_{i=0}^{n}$ in $G$ such that $w_{n}^{-1} w_{0}$ is a $D_{0}$-pre-aligned isometry. Given $(\beta_{i}, \gamma_{i}, v_{i}) \in \tilde{S}$ for $i=1, \ldots, n$, define $x_{0}:= w_{n}^{-1} o$, $W_{0}:= w_{0}$ and \[
\begin{aligned}
V_{k} &:= W_{k} \Gamma(\beta_{k+1}) v_{k+1}, \,\, W_{k+1} := V_{k} \Gamma(\gamma_{k+1}) w_{k+1}& (k=0, \ldots, n-1), \\
(x_{2k-1}, x_{2k}) &:= \big(W_{k-1} o, \,\,V_{k-1} \Pi(\gamma_{k}) o\big) & (k=1, \ldots, n).
\end{aligned}
\]
Let $\mu$ be a probability measure on $G$ with infinite second moment, and let $(\beta_{i}, \gamma_{i}, v_{i})_{i=1}^{n}$ be i.i.d.s distributed according to $(\textrm{uniform measure on $S^{2}$}) \times \mu$ conditioned on $\tilde{S}$. Then, in addition to all the conclusions of Lemma \ref{lem:alignedGromov}, the following also holds: \begin{enumerate}\setcounter{enumi}{4}
\item the translation length $\tau(W_{n})$ and $d(x_{0}, x_{2n})$ differ by at most $E_{0}$.
\end{enumerate}
\end{lem}

\begin{proof}
The single change is that $x_{0}$ is now $w_{n}^{-1} o$ instead of $o$. Since $w_{n}^{-1} w_{0}$ is assumed to be $D_{0}$-pre-aligned, we have that \[
\big( w_{n}^{-1} \Pi(\gamma_{n})^{-1} \Gamma(\gamma_{n}), w_{0} \Gamma(\beta_{1}) \big) =\big( W_{n}^{-1} V_{n-1} \Gamma(\gamma_{n}), W_{0} \Gamma(\beta_{1}) \big)
\]
is $D_{0}$-semi-aligned for any choice of $(\beta_{1}, \gamma_{1}, v_{1}), (\beta_{n}, \gamma_{n}, v_{n}) \in \tilde{S}$. Moreover, since $(w_{i})_{i=0}^{n}$ is assumed to be $D_{0}$-pre-aligned, we still have that \[
(o, W_{0} \Gamma(\beta_{1}), V_{0} \Gamma(\gamma_{1}), \ldots, W_{n-1} \Gamma(\beta_{n}), V_{n-1} \Gamma(\gamma_{n}), W_{n} o)
\]
is $D_{0}$-semi-aligned for any choices of $(\beta_{i}, \gamma_{i}, v_{i}) \in \tilde{S}$. Combining these, we observe that \[
(x_{0}, W_{0} \Gamma(\beta_{1}), V_{0} \Gamma(\gamma_{1}), \ldots, W_{n-1} \Gamma(\beta_{n}), V_{n-1} \Gamma(\gamma_{n}), W_{n} o)
\]
is always $D_{0}$-semi-aligned. Item (1), (2), (3), (4) of the conclusion follows.

It remains to prove the additional conclusion. For this, define \begin{equation}\label{eqn:alignedGromovTransAssu}
(\kappa_{1}, \kappa_{2}, \ldots, \kappa_{2n-1}, \gamma_{2n}) := \big(W_{0}\Gamma(\beta_{1}), V_{0} \Gamma(\gamma_{1}), \ldots, W_{n-1}\Gamma(\beta_{n}), V_{n-1} \Gamma(\gamma_{n}) \big)
\end{equation}
and $\kappa_{2nt + k} := W_{n}^{t} \cdot \kappa_{k}$ for each $t \in \Z$ and $k = 1, \ldots, 2n$. Since $(\kappa_{0}, \kappa_{1})$ and $(\kappa_{1}, \ldots, \kappa_{2n})$ are always $D_{0}$-semi-aligned, we conclude that $(\ldots, \kappa_{-1}, \kappa_{0}, \kappa_{1}, \kappa_{2}, \ldots)$ is $D_{0}$-semi-aligned for any $t>0$. At this point, note that $W_{n}^{t} x_{0} = W_{n}^{t-1} V_{n-1} \Pi(\gamma_{n}) o$ is the ending point of $\kappa_{2nt}$ for each $t$. By applying Lemma \ref{lem:BGIPWitness} to the $D_{0}$-semi-aligned sequence \[
(\kappa_{0} \ni x_{0}, \kappa_{1}, \kappa_{2}, \ldots, \kappa_{2nt-1}, W_{n}^{t} x_{0} \in \kappa_{2nt}),
\]
we deduce that there exist points $y_{1}, \ldots, y_{t-1}$ on $[x_{0}, W_{n}^{t} x_{0}]$, in order from closest to farthest from $x_{0}$, such that $d(W_{n}^{k} x_{0}, y_{k}) \le 0.1 E_{0}$ for each $k$.  This implies that \[\begin{aligned}
d(x_{0}, W_{n}^{t} x_{0}) &= d(x_{0}, y_{1}) + \sum_{k=1}^{t-2} d(y_{k}, y_{k+1}) + d(y_{t-1}, W_{n}^{t} x_{0}) \\
&\ge d(x_{0}, W_{n} x_{0}) + \sum_{k=1}^{t-2} d(W_{n}^{k} x_{0}, W_{n}^{k+1} x_{0}) + d(W_{n}^{t-1} x_{0}, W_{n}^{t} x_{0}) - t E_{0} \\
&\ge t \big(d(x_{0}, x_{2n}) - E_{0}\big).
\end{aligned}
\]
After dividing by $t$ and sending $t$ to infinity, we obtain $\tau(W_{n}) \ge d(x_{0}, x_{2n}) - E_{0}$. Meanwhile, the triangle inequality implies that  \[
d(x_{0}, W_{n}^{t} x_{0}) \le \sum_{k=1}^{t} d(W_{n}^{k-1} x_{0}, W_{n}^{k} x_{0}) \le t d(x_{0}, x_{2n})
\]
and hence $\tau(W_{n}) \le d(x_{0}, x_{2n})$. The conclusion is now established.
\end{proof}

We are now ready to prove:

\begin{prop}\label{prop:CLTConverse}
Let $(X, G, o)$ be as in Convention \ref{conv:main} and let $(Z_{n})_{n \ge 0}$ be the random walk on $G$ generated by a non-elementary measure $\mu$ with infinite second moment. Then for any sequence $(c_{n})_{n}$ of real numbers,  neither of $\frac{1}{\sqrt{n}} (d(o, Z_{n} o) - c_{n})$ and $\frac{1}{\sqrt{n}} (\tau(Z_{n}) - c_{n})$ converges in law.
\end{prop}

\begin{proof}
We follow the proof given in \cite[Section 6.1]{choi2021clt}. Let $S$ be a fairly long $K_{0}$-Schottky set for $\mu$ with cardinality at least $400$. Since we are concerned with the distribution of $Z_{n}$, we may replace $Z_{n}$ with the one in Corollary \ref{cor:pivotCorCombined} that is defined on a probability space $\Omega_{n}$, constructed together with the integer $m(n)$, the subset $A_{n}\subseteq \Omega_{n}$, partition $\mathcal{P}_{n}$ of $A_{n}$ and the RVs $(w_{i}, v_{i}, \beta_{i}, \gamma_{i})_{i}$.

The idea is to construct the product space $\Omega_{n} \times \dot{\Omega}_{n}$ with the product measure $\Prob_{n}$, where $\dot{\Omega}_{n}$ carries an i.i.d. copy $\dot{Z}_{n}$ of $Z_{n}$, as well as the corresponding notions $\dot{A}_{n}$, $\dot{\mathcal{P}}_{n}$ and $(\dot{w}_{i}, \dot{v}_{i}, \dot{\beta}_{i}, \dot{\gamma}_{i})_{i}$. If $\frac{1}{\sqrt{n}}d(o, Z_{n} o) -c_{n}$ converges in law for some sequence $(c_{n})_{n>0}$, the difference with its i.i.d. copy \[
\frac{1}{\sqrt{n}}[d(o, Z_{n} o)  - d(o, \dot{Z}_{n}o)]
\]
also converges in law. We will deduce a contradiction to this convergence.

On $A_{n} \subseteq \Omega_{n}$, we define $W_{i}$'s and $V_{i}$'s as in the setting of Lemma \ref{lem:alignedGromov}: $W_{0} := w_{0}$ and \[
V_{k} := W_{k} \Pi(\beta_{k+1}) v_{k+1}, \,\, W_{k+1} := V_{k} \Pi(\gamma_{k+1}) w_{k+1} \quad (k=0, \ldots, m(n) - 1).
\]
We also define \[
(x_{2k-1}, x_{2k}) := (W_{k-1} o, V_{k-1} \Pi(\gamma_{k}) o)
\]
for $k=1, \ldots, m(n)$ and let $x_{0} := o$, $x_{2m(n)+1} := W_{m(n)} o = Z_{n} o$.

Let $\mathcal{E} \in \mathcal{P}_{n}$ be an arbitrary equivalence class. By Item (2) of the conclusion of Corollary \ref{cor:pivotCorCombined}, conditioned on $\mathcal{E}$, $(w_{i})_{i}$ are fixed and $(\beta_{i}, \gamma_{i}, v_{i}))_{i}$ are i.i.d.s distributed according to  $(\textrm{uniform measure on $S^{2}$})\times \mu$ conditioned on $\tilde{S}$. Moreover, by Item (3) of the conclusion of Corollary \ref{cor:pivotCorCombined},
\[
(o, W_{0} \Gamma(\beta_{1}), V_{0} \Gamma(\gamma_{1}), \ldots, W_{m(n)-1} \Gamma(\beta_{m(n)}), V_{m(n)-1} \Gamma(\gamma_{m(n)}), W_{m(n)} o)
\]
is $D_{0}$-semi-aligned on $\mathcal{E}$. We then apply Lemma \ref{lem:alignedGromov} to the equality \[\begin{aligned}
d(o, Z_{n} o) &= \underbrace{\sum_{i=1}^{m(n) + 1} d(x_{2i-2}, x_{2i-1})}_{I_{1}} + \underbrace{\sum_{i=1}^{m(n)} d(x_{2i-1}, x_{2i})}_{I_{2}}    \\
&- 2\underbrace{\sum_{l=0}^{\log_{2} m(n)} \sum_{k=1}^{m(n) /2^{l}}(x_{2^{l} (2k-2)}, x_{2^{l} \cdot 2k})_{x_{2^{l} (2k-1)}}}_{I_{3}} - 2\underbrace{\cdot(x_{0}, x_{2 \cdot m(n)+1})_{x_{2m(n)}}}_{I_{4}}.
\end{aligned}
\]
Let $J_{l} := \sum_{k=1}^{m(n)/2^{l}} (x_{2^{l} (2k-2)}, x_{2^{l} \cdot 2k} )_{x_{2^{l} (2k-1)}}$ be the $(l-1)$-th summand of $I_{3}$. Conditioned on $\mathcal{E}$, $J_{l}$ is the sum of $m(n)/2^{l}$ independent RVs bounded by $E_{0}$. By Chebyshev's inequality, we have \[\begin{aligned}
\Prob_{n} \left( \Big|J_{l} - \E[J_{l} | \mathcal{E} ] \Big| \ge 2^{-l/4} \cdot 100E_{0} \sqrt{m(n)}  \, \, \Big| \, \mathcal{E}\right) &\le \frac{1}{\left(2^{-l/4} \cdot 100E_{0}\cdot \sqrt{m(n)}\right)^{2}} Var\left(J_{l} \, \big| \, \mathcal{E} \right) \\
&\le\frac{2^{l/2}}{ 10000E_{0}^{2}m(n)} \cdot \frac{m(n)}{2^{l}} \cdot E_{0}^{2} \le 10^{-4} \cdot 2^{-l/2}.
\end{aligned}
\]
By summing this probability, we have \begin{equation}\label{eqn:I3Prob}
\Prob_{n}\left(\big|I_{3} - \E[I_{3} \, | \, \mathcal{E}]\big|> 800 E_{0} \cdot \sqrt{m(n)} \,\, \Big| \,A_{n}\right) \le 1/2000.
\end{equation}
Note also $I_{1}$ is constant on each $\mathcal{E} \in \mathcal{P}_{n}$ and $I_{4}$ is bounded by $E_{0}$ on $A_{n}$.

Let us now bring an arbitrary equivalence class $\dot{\mathcal{E}}$ from the partition $\dot{\mathcal{P}}_{n}$. Similarly, we get \[
\begin{aligned}
d(o, \dot{Z}_{n} o) &= \underbrace{\sum_{i=1}^{2^{s} + 1} d(\dot{x}_{2i-2}, \dot{x}_{2i-1})}_{\dot{I}_{1}} + \underbrace{\sum_{i=1}^{2^{s}} d(\dot{x}_{2i-1}, \dot{x}_{2i})}_{I_{2}}    \\
&- 2\underbrace{\sum_{l=0}^{s} \sum_{k=1}^{2^{s-l}}(\dot{x}_{2^{l} (2k-2)}, \dot{x}_{2^{l} \cdot 2k})_{\dot{x}_{2^{l} (2k-1)}}}_{\dot{I}_{3}} - 2\underbrace{\cdot(\dot{x}_{0}, \dot{x}_{2 \cdot 2^{s}+1})_{\dot{x}_{2\cdot2^{s}}}}_{\dot{I}_{4}}.
\end{aligned}
\]
We have the following ($\ast$):
\begin{enumerate}
\item Since the situation is symmetric, for large enough $n$ we have \[\begin{aligned}
\Prob_{n}\left( \bigcup \left\{(\mathcal{E}, \dot{\mathcal{E}}) \in \mathcal{P}_{n} \times \dot{\mathcal{P}}_{n} : I_{1} + \E[I_{3}\, |\, \mathcal{E}] \ge \dot{I}_{1} + \E[\dot{I}_{3} \, | \, \dot{\mathcal{E}}] \right\}\right) &\ge \frac{1}{2} \Prob_{n}(A_{n} \times \dot{A}_{n}) \\
&\ge 0.5(1 - \Prob_{n}(A_{n}) - \Prob_{n}(\dot{A}_{n})) \ge  0.4.
\end{aligned}
\]
\item By Inequality \ref{eqn:I3Prob}, we have \[
\Prob_{n}\left( \max \bigg( \Big|I_{3} - \E[I_{3} \, | \, \mathcal{E}]\Big|,  \Big|\dot{I}_{3} - \E[\dot{I}_{3} \, | \, \mathcal{\dot{E}}] \Big| \bigg) \ge 800 E_{0} \cdot \sqrt{m(n)} \, \bigg| \, \mathcal{E} \times \dot{\mathcal{E}}\right) \le 1/1000
\]
for each $\mathcal{E} \in \mathcal{P}_{n}$ and $\dot{\mathcal{E}} \in \dot{\mathcal{P}}_{n}$ for all $n$.
\item Since $I_{2}-\dot{I}_{2}$ is a sum of $m(n)$ i.i.d.s of \emph{symmetric} distribution with infinite second moment, for any $K'>0$ we have \[
\Prob_{n}\left(I_{2} - \dot{I}_{2} \ge K' \sqrt{m(n)}\, \Big| \, \mathcal{E} \times \dot{\mathcal{E}} \right) \ge 1/5
\]
for each $\mathcal{E} \in \mathcal{P}$ and $\dot{\mathcal{E}} \in \dot{\mathcal{P}}$ for sufficiently large $n$.
\item $I_{4}$ and $\dot{I}_{4}$ are bounded by $E_{0}$ on $A_{n} \times \dot{A}_{n}$.
\end{enumerate}
Let $C := \lim_{n} m(n)/n > 0$. The 4 items in $(\ast$) imply that for each $K'>10000E_{0}$, for large enough $n$, we have \[
d(o, Z_{n} o) - d(o, \dot{Z}_{n} o) \ge K' \sqrt{m(n)}  - 1600 E_{0} \sqrt{m(n)} - 2E_{0} \ge 0.25 K' \sqrt{C n}
\] with probability at least $0.4 \cdot (1/5 - 1/1000) \ge 0.05$. Hence we have \[
\liminf_{n} \Prob_{n}\left( \frac{1}{\sqrt{n}} \big(d(o, Z_{n} o) - d(o, \dot{Z}_{n} o) \big) \ge 0.25 K' \sqrt{C} \right) \ge 0.05 \quad (\forall \, K' > 10000E_{0}).
\]
This contradicts to the convergence of $\frac{1}{\sqrt{n}}[d(o, Z_{n} o) - d(o, \dot{Z}_{n} o)]$ in law.

We now derive a contradiction to the convergence in law of $\frac{1}{\sqrt{n}} [\tau(Z_{n}) - \tau(\dot{Z}_{n})]$. We again prepare the outputs of Corollary \ref{cor:pivotCorCombined}, namely, $m(n), \Omega_{n}, A_{n}$, $\mathcal{P}_{n}$, $(w_{i})_{i=0}^{m(n)}$,  $(\beta_{i}, \gamma_{i}, v_{i})_{i=1}^{m(n)}$, and their independent copies. We define $(W_{i}, V_{i})_{i=0}^{m(n)-1}$ and $(x_{i})_{i=1}^{2m(n)}$ as before, and let $x_{0} := w_{m(n)}^{-1} o$.

This time, Lemma \ref{lem:alignedGromovTrans} tells us that \[
\begin{aligned}
\tau(Z_{n}) &= d(x_{0}, x_{2n}) + [\tau(Z_{n}) - d(x_{0}, x_{2n}) ] \\
&= \underbrace{\sum_{i=1}^{m(n) + 1} d(x_{2i-2}, x_{2i-1})}_{I_{1}} + \underbrace{\sum_{i=1}^{m(n)} d(x_{2i-1}, x_{2i})}_{I_{2}}    \\
&- 2\underbrace{\sum_{l=0}^{\log_{2} m(n)} \sum_{k=1}^{m(n) /2^{l}}(x_{2^{l} (2k-2)}, x_{2^{l} \cdot 2k})_{x_{2^{l} (2k-1)}}}_{I_{3}} +\underbrace{\big( \tau(Z_{n}) - d(x_{0}, x_{2m(n)} \big)}_{I_{4}}.
\end{aligned}
\]
Here, $I_{1}, I_{2}, I_{3}$ and $I_{4}$ satisfy the exact same properties as before. Hence, we observe the same consequences as in $(\ast$). This leads to the same contradiction to the convergence in law of  $\frac{1}{\sqrt{n}} [\tau(Z_{n}) - \tau(\dot{Z}_{n})]$. Hence, for any choice of the sequence $(c_{n})_{n>0}$,  $\frac{1}{\sqrt{n}} (\tau(Z_{n}) - c_{n})$ cannot converge in law.
\end{proof}

\subsection{Pivoting and repulsion among independent random walks} \label{subsection:pivotIndep}

In this subsection, we consider two random walks $(Z_{n}^{(1)})_{n}$ and $(Z_{n}^{(2)})_{n}$ generated by non-elementary measures $\mu^{(1)}$ and $\mu^{(2)}$ on $G$. Let $S^{(1)}$ and $S^{(2)}$ be fairly long $K_{0}$-Schottky sets for $\mu^{(1)}$ and $\mu^{(2)}$. We define the subset $\tilde{S}^{(t)}$ following Definition \ref{dfn:pivotingChoices}, with $S^{(t)}$ in place of $S$.

We first fix $D_{0}$-semi-aligned sequences $\big(w_{i}^{(t)}\big)_{i=0}^{n}$ in $G$ for $t =0, 1$. Given the choices of $\big(\beta_{i}^{(t)}, \gamma_{i}^{(t)}, v_{i}^{(t)}\big) \in \tilde{S}^{(t)}$ for $i=1, \ldots, n$, we define $W_{i}^{(t)}$'s, $V_{i}^{(t)}$'s as before: $W_{0}^{(t)} := w_{0}^{(t)}$ and \[
V_{k}^{(t)} := W_{k}^{(t)} \Pi\big(\beta_{k+1}^{(t)}\big) v_{k+1}^{(t)}, \quad W_{k+1}^{(t)} := V_{k}^{(t)} \Pi\big(\gamma_{k+1}^{(t)}\big) w_{k+1}^{(t)}.
\]

This time, we want not only that $W_{n}^{(t)}$ and $(W_{n}^{(t)})^{-1}$ head in different directions for $t=1, 2$, but also that the directions made by $W_{n}^{(1)}$, $W_{n}^{(2)}$, $(W_{n}^{(1)})^{-1}$, $(W_{n}^{(2)})^{-1}$ are all distinct. For this purpose, we define the following: \begin{enumerate}
\item (front-front repulsion) For each $1 \le k \le n/2$ we let \[\begin{aligned}
\phi_{k}^{(1, 2), front} &:= \big(W_{k-1}^{(2)} \big)^{-1} \cdot W_{k-1}^{(1)} \\
&= \big(w_{k-1}^{(2)}\big)^{-1} \big(\Pi\big(\gamma_{k-1}^{(2)}\big) \big)^{-1} \cdots \big(w_{0}^{(2)}\big)^{-1} \cdot w_{0}^{(1)} \Pi(\beta_{1}^{(1)}) \cdots w_{k-1}^{(1)}, \\
\tilde{S}_{k}^{(1, 2), front} &:= \left\{ (\beta_{k}^{(1)}, \gamma_{k}^{(1)}) \in \tilde{S}^{(1)}(v_{k}^{(1)}) : \textrm{$\left(\big(\phi_{k}^{(1, 2), front} \big)^{-1} o, \, \Gamma\big(\beta_{k}^{(1)}\big) \right)$ is $K_{0}$-aligned} \right\},\\
\tilde{S}_{k}^{(2, 1), front} &:= \left\{ (\beta_{k}^{(2)}, \gamma_{k}^{(2)}) \in \tilde{S}^{(2)}(v_{k}^{(1)}) : \textrm{$\left(\phi_{k}^{(1, 2), front} \Pi\big(\beta_{k}^{(1)}\big)o, \, \Gamma\big(\beta_{k}^{(2)}\big) \right)$ is $K_{0}$-aligned} \right\}.
\end{aligned}
\]
\item (back-back repulsion) For each $1 \le k \le n/2$ we let \[\begin{aligned}
\phi_{k}^{(1, 2), back} &:= \Big(V_{n-k}^{(2)} \Pi\big(\gamma_{n-k+1}^{(2)}\big)\Big)^{-1} W_{n}^{(2)} \cdot (W_{n}^{(1)})^{-1}V_{n-k}^{(1)} \Pi\big(\gamma_{n-k+1}^{(1)}\big) \\
& = w_{n-k+1}^{(2)} \Pi(\beta_{n-k+2}^{(2)}) \cdots w_{n}^{(2)} \cdot \big(w_{n}^{(1)}\big)^{-1} \big(\Pi(\gamma_{n}^{(1)})\big)^{-1} \cdots \big(w_{n-k+1}^{(1)}\big)^{-1},\\
\tilde{S}_{k}^{(1, 2), back} &:= \left\{ \begin{array}{c} (\beta_{n-k+1}^{(1)}, \gamma_{n-k+1}^{(1)}) \in \tilde{S}^{(1)}(v_{n-k+1}^{(1)}) : \\
 \textrm{$\left( \Gamma\big(\gamma_{n-k+1}^{(1)}\big),\, \Pi(\gamma_{n-k+1}^{(1)}) \cdot \big(\phi_{k}^{(1, 2), back}\big)^{-1} o\right)$ is $K_{0}$-aligned} \end{array} \right\},\\
\tilde{S}_{k}^{(2, 1), back} &:= \left\{ \begin{array}{c}  (\beta_{n-k+1}^{(2)}, \gamma_{n-k+1}^{(2)}) \in \tilde{S}^{(2)}(v_{n-k+1}^{(2)}) : \\\textrm{$\left(\Gamma\big(\gamma_{n-k+1}^{(2)}\big), \,\Pi\big(\gamma_{n-k+1}^{(2)} \big) \phi_{k}^{(1, 2), back} \left(\Pi\big(\gamma_{n-k+1}^{(1)}\big) \right)^{-1} \cdot o \right)$ is $K_{0}$-aligned}\end{array}\right\}.
\end{aligned}
\]
\item (front-back repulsion) For each $s, t \in \{1, 2\}$ and $1 \le k \le n/2$ we let \[\begin{aligned}
\phi_{k}^{(s, t), mixed} &:= \left( V_{n-k}^{(t)} \Pi\big(\gamma_{n-k+1}^{(t)} \big) \right)^{-1} W_{n}^{(t)} \cdot W_{k-1}^{(s)} \\
&= w_{n-k+1}^{(t)} \Pi\big(\beta_{n-k+2}^{(t)}\big) \cdots w_{n}^{(t)} \cdot w_{0}^{(s)} \Pi\big(\beta_{1}^{(s)}\big) \dots w_{k-1}^{(s)},\\
\tilde{S}_{k}^{(s, t), \rightarrow} &:= \left\{ \begin{array}{c} (\beta_{k}^{(s)}, \gamma_{k}^{(s)}) \in \tilde{S}^{(s)}(v_{k}^{(s)}) : \\
 \textrm{$\left( \big(\phi_{k}^{(s, t), mixed}\big)^{-1} o,\, \Gamma\big(\beta_{k}^{(s)} \big) \right)$ is $K_{0}$-aligned} \end{array} \right\},\\
\tilde{S}_{k}^{(s, t), \leftarrow} &:= \left\{ \begin{array}{c}  (\beta_{n-k+1}^{(t)}, \gamma_{n-k+1}^{(t)}) \in \tilde{S}^{(t)}(v_{n-k+1}^{(t)}) : \\\textrm{$\left(\Gamma\big(\gamma_{n-k+1}^{(t)}\big), \,\Pi\big(\gamma_{n-k+1}^{(t)} \big) \phi_{k}^{(s, t), mixed} \Pi\big(\beta_{k}^{(s)}\big)  \cdot o \right)$ is $K_{0}$-aligned}\end{array}\right\}.
\end{aligned}
\]
\end{enumerate}

We now establish an analogue of Lemma \ref{lem:selfRepulse}.

\begin{lem}\label{lem:multiRepulse}
Let $1 \le k\le  n$. Let $\big(w_{i}^{(0)}\big)_{i=0}^{n}$ and $\big(w_{i}^{(1)}\big)_{i=0}^{n}$ be $D_{0}$-pre-aligned sequences in $G$. Let $\big(\beta_{i}^{(t)}, \gamma_{i}^{(t)}, v_{i}^{(t)}\big)_{i=1, \ldots, k, n-k+1, \ldots, n} \in \big(\tilde{S}^{(t)} \big)^{2k}$ for $t=1, 2$ be choices such that \begin{equation}\label{eqn:multiCondition}\begin{aligned}
(\beta_{k}^{(1)}, \gamma_{k}^{(1)}) &\in \tilde{S}_{k}^{(1, 2), front}\cap \tilde{S}_{k}^{(1, 1), \rightarrow} \cap \tilde{S}_{k}^{(1, 2), \rightarrow}, \\
(\beta_{k}^{(2)}, \gamma_{k}^{(2)}) &\in \tilde{S}_{k}^{(2,1), front}\cap \tilde{S}_{k}^{(2, 1), \rightarrow} \cap \tilde{S}_{k}^{(2, 2), \rightarrow}, \\
(\beta_{n-k+1}^{(1)}, \gamma_{n-k+1}^{(1)}) &\in \tilde{S}_{k}^{(1, 2), back}\cap \tilde{S}_{k}^{(1, 1), \leftarrow} \cap \tilde{S}_{k}^{(2, 1), \leftarrow},\\
(\beta_{n-k+1}^{(2)}, \gamma_{n-k+1}^{(2)}) &\in \tilde{S}_{k}^{(1, 2), back}\cap \tilde{S}_{k}^{(2, 1), \leftarrow} \cap \tilde{S}_{k}^{(2, 2), \leftarrow}.
\end{aligned}
\end{equation}
Then for any further choices $\big(\beta_{i}^{(t)}, \gamma_{i}^{(t)}, v_{i}^{(t)}\big)_{i=k+1, \ldots, n-k} \in \big(\tilde{S}^{(t)} \big)^{2(n-k)}$ for $t=1, 2$, $W_{n}^{(1)}$ and $W_{n}^{(2)}$ are contracting isometries that generate a free group of order 2. Moreover, the orbit map is a quasi-isometric embedding of $\langle W_{n}^{(1)}, W_{n}^{(2)} \rangle$ into a quasi-convex subset of $X$.
\end{lem}

\begin{proof}
Recall the following notation: given a path $\Gamma = (x_{0}, x_{1} \ldots, x_{m})$, we defined its \emph{reversal} by  \[
\bar{\Gamma} := (x_{m}, x_{m-1}, \ldots, x_{0}).
\]
For convenience, let us define $\kappa^{(t)} := W_{k-1}^{(t)} \Gamma(\beta_{k}^{(t)})$ and let $\bar{\kappa}^{(t)}$ be its reversal for $s = 1, 2$. Also, let $\eta^{(t)} := (W_{n}^{(t)})^{-1} V_{n-k}\Gamma(\gamma_{n-k+1}^{(t)})$ and let $\bar{\eta}^{(t)}$ be its reversal.

By Lemma \ref{lem:1segment}, the assumption of the lemma implies that the following sequences of Schottky axes are $K_{0}$-aligned: \[\begin{aligned}
(\bar{\kappa}_{(1)}, \kappa_{(2)}), \quad (\eta_{(1)}, \bar{\eta}_{(1)}), \quad (\eta_{(1)}, \kappa_{(1)}), \\
 (\eta_{(2)}, \kappa_{(1)}),\quad (\eta_{(1)}, \kappa_{(2)}),\quad (\eta_{(2)}, \kappa_{(2)}).
 \end{aligned}
\]
Moreover, since $(w_{i}^{(t)})_{i=0}^{n}$'s are $D_{0}$-pre-aligned, we have that \[
(o, \kappa_{(t)}, W_{n}^{(t)} \eta_{(t)}, W_{n}^{(t)} o) \quad (t\in \{1, 2\})
\]is $D_{0}$-semi-aligned.

We now prove that the orbit map from $\langle W_{n}^{(1)}, W_{n}^{(2)}\rangle$ to $X$ is bi-Lipshictz and the orbit set is quasiconvex. It is clear that the map is $\max(d(o, W_{n}^{(1)} o), d(o, W_{n}^{(2)} o))$-Lipschitz. We define \[
\begin{aligned}
K_{1} &:=  \min \{ \diam(\kappa_{(1)} \cup W_{n}^{(1)}\eta_{(1)}), \diam(\kappa_{(2)} \cup W_{n}^{(2)} \eta_{(2)})\} - E_{0}, \\
K_{2}& := 4\sum_{t \in \{1, 2\}} \Big(\diam(\eta_{(t)}, o) + \diam(\kappa_{(t)} o)  + \diam(\kappa_{(t)} \cup W_{n}^{(t)} \eta_{(t)})  + E_{0} \big).
\end{aligned}
\]
Since $\kappa_{(1)}$ and $\kappa_{(2)}$ are both fairly long Schottky axes, we have $\diam(\kappa_{(1)}), \diam(\kappa_{(2)}) \ge 10E_{0}$ and consequently $K_{1}$ is positive.

The conclusion is established once we show that \begin{equation}\begin{aligned}\label{eqn:lowerBdQIMu}
d(o, a_{1} \cdots a_{m} o) &\ge 
m K_{1}, \\
[o, a_{1} \cdots a_{m} o] &\subseteq N_{K_{2}}\left( \{o, a_{1}o, a_{1}a_{2} o, \ldots, a_{1} \cdots a_{m} o\}\right)
\end{aligned}
\end{equation}
for each $m$ and each choice of $a_{1}, \ldots, a_{m} \in \{W_{n}^{(1)}, W_{n}^{(2)}, (W_{n}^{(1)})^{-1}, (W_{n}^{(2)})^{-1}\}$ such that $a_{i} \neq a_{i+1}^{-1}$ for each $i$. Let us show this for the choice $a_{1}a_{2}a_{3} = W_{n}^{(1)} (W_{n}^{(2)})^{-1} W_{n}^{(1)}$ as an example; the same argument applies to all other cases. Combining the alignment mentioned above, we deduce that \[
\Big(\begin{array}{c} o, \kappa_{(1)}, W_{n}^{(1)} \eta_{(1)}, W_{n}^{(1)} \bar{\eta}_{(2)}, W_{n}^{(1)} (W_{n}^{(2)})^{-1}\bar{\kappa}_{(2)}, W_{n}^{(1)} (W_{n}^{(2)})^{-1}\kappa_{(1)}, \\W_{n}^{(1)} (W_{n}^{(2)})^{-1}W_{n}^{(1)} \eta_{(1)}, W_{n}^{(1)} (W_{n}^{(2)})^{-1}W_{n}^{(1)} o \end{array}\Big)
\]
is $D_{0}$-semi-aligned. By Lemma \ref{lem:BGIPWitness}, there exist disjoint subsegments $[x_{1}, y_{1}]$, $\ldots$, $[x_{6}, y_{6}]$ of $[o, W_{n}^{(1)} (W_{n}^{(2)})^{-1} W_{n}^{(1)} o]$, in order from closest to farthest from $o$, that $0.1E_{0}$-fellow travel with $\kappa_{(1)}$, $\ldots$, $W_{n}^{(1)} (W_{n}^{(2)})^{-1}W_{n}^{(1)} \eta_{(1)}$, respectively. In particular, we deduce that \[\begin{aligned}
d(x_{1}, y_{2}) &\ge \diam(\kappa_{(1)}\cup  W_{n}^{(1)} \eta_{(1)}) - E_{0}, \\
d(x_{3}, y_{4}) &\ge  \diam(W_{n}^{(1)} \bar{\eta}_{(2)} \cup  W_{n}^{(1)}(W_{n}^{(2)})^{-1}  \bar{\kappa}_{(2)}) - E_{0} \\
&= \diam(\kappa_{(2)}\cup  W_{n}^{(2)} \eta_{(2)}) - E_{0}, \\
d(x_{5}, y_{6}) &\ge  \diam(W_{n}^{(1)} (W_{n}^{(2)})^{-1} \kappa_{(1)} \cup  W_{n}^{(1)}(W_{n}^{(2)})^{-1}  W_{n}^{(1)} \eta_{(1))}) - E_{0} \\
&= \diam(\kappa_{(1)}\cup  W_{n}^{(1)} \eta_{(1)}) - E_{0}.
\end{aligned}
\]
Since $d(o,  W_{n}^{(1)} (W_{n}^{(2)})^{-1}W_{n}^{(1)} o)$ is greater than $d(x_{1}, y_{2}) + d(x_{3}, y_{4}) + d(x_{5}, y_{6})$, we can conclude the first inequality in Display \ref{eqn:lowerBdQIMu}. Moreover, note that \[
\begin{aligned}
d(x_{2}, W_{n}^{(1)} o) \le \diam (\eta_{(1)} o \cup o) + E_{0},\\
d(x_{4}, W_{n}^{(1)} ( W_{n}^{(2)})^{-1} o) \le \diam (\kappa_{(1)} o \cup o) + E_{0}.
\end{aligned}
\]
Now for each point $y$ on $[o, x_{2}]$ we have \[
\begin{aligned}
d(y, W_{n}^{(1)} o) &\le d(o, x_{2}) + d(x_{2}, W_{n}^{(1)} o)\\
& \le \Big(\diam(o \cup \kappa_{(1)}) + \diam(\kappa_{(1)} \cup W_{n}^{(1)} \eta_{(1)}) +E_{0} \Big)+\Big(\diam (\eta_{(1)} o \cup o) + E_{0}\Big) \le K_{2}.
\end{aligned}
\]
Similarly, for any point $y$ on $[x_{2}, x_{4}]$ we have \[
\begin{aligned}
d(y, W_{n}^{(1)}(W_{n}^{(2)})^{-1} o) &\le d(x_{2}, x_{4}) + d(x_{4}, W_{n}^{(1)} (W_{n}^{(2)})^{-1} o)\\
& \le \diam(o\cup  \eta_{(1)}) + \diam(o \cup \eta_{(2)}) + \diam(\kappa_{(2)} \cup W_{n}^{(2)} \eta_{(2)}) + 2E_{0} \\
& +\diam (\eta_{(2)} o \cup o) + E_{0} \le K_{2}.
\end{aligned}
\]
For a similar reason, $[x_{4}, W_{n}^{(1)} (W_{n}^{(2)})^{-1} W_{n}^{(1)} o]$ is contained in the $K_{2}$-neighborhood of $W_{n}^{(1)} (W_{n}^{(2)})^{-1} W_{n}^{(1)} o$. This settles the second line of Display \ref{eqn:lowerBdQIMu}.
\end{proof}

We now establish an analogue of Lemma \ref{lem:trLengthPivotProb}.

\begin{lem}\label{lem:multiPivotProb}
Let $1 \le k \le n/2$, let $(w_{i}^{(t)})_{i=1}^{n} \in G^{n+1}$ and let $(\beta_{i}^{(t)}, \gamma_{i}^{(t)}, v_{i}^{(t)})_{i=1}^{n} \in \tilde{S}_{n}^{(t)}$ for $t = 1, 2$. Then Condition \ref{eqn:multiCondition} depends only on $\{(\beta_{l}^{(t)}, \gamma_{l}^{(t)}, v_{l}^{(t)}) : l = 1, \ldots, k, n-k+1, \ldots, n, t = 1, 2\}$.

Moreover, fixing any choice of $\{v_{l}^{(t)} : l=1, \ldots, k, n-k+1, \ldots, n, t=1, 2\}$, the condition holds for probability at least $1 - (10\#S^{(1)})^{k} - (10\#S^{(2)})^{k}$ with respect to the uniform measures on $\tilde{S}^{(t)}(v_{l})$ for $t=1, 2$ and for $l=1, \ldots, k, n-k+1, \ldots, n$, all independent.
\end{lem}

\begin{proof}
Note that the statement \begin{equation}\label{eqn:multiCondition1}
(\beta_{k}^{(1)}, \gamma_{k}^{(1)}) \in S_{l}^{(1, 2), front} \cap \tilde{S}_{k}^{(1, 1), \rightarrow} \cap \tilde{S}_{k}^{(1, 2), \rightarrow}
\end{equation} is really a condition for $\beta_{k}^{(1)}$ that involves $\phi_{k}^{(1, 2), front}$, $\phi_{k}^{(1, 1), mixed}$ and $\phi_{k}^{(1, 2), mixed}$, which depend only on $\beta_{l}^{(t)}$ and $\gamma_{l}^{(t)}$ for $l=1, \ldots, k-1, n-k+2, \ldots, n$ and for $t = 1, 2$. Fixing a choice of $(\beta_{k}^{(1)}, \gamma_{k}^{(1)})$ satisfying condition \ref{eqn:multiCondition1}, the statement \begin{equation}\label{eqn:multiCondition2}
(\beta_{k}^{(2)}, \gamma_{k}^{(2)}) \in S_{l}^{(2,1), front} \cap \tilde{S}_{k}^{(2,1), \rightarrow} \cap \tilde{S}_{k}^{(2, 2), \rightarrow}
\end{equation} is now a condition for $\beta_{k}^{(2)}$ that is determined only by $\beta_{l}^{(t)}$ and $\gamma_{l}^{(t)}$ for $l=1, \ldots, k-1, n-k+2, \ldots, n$ and for $t = 1, 2$, and $\beta_{k}^{(1)}$. Fixing a $(\beta_{k}^{(2)}, \gamma_{k}^{(2)})$ further, the condition \begin{equation}\label{eqn:multiCondition3}
(\beta_{n-k+1}^{(1)}, \gamma_{n-k+1}^{(1)}) \in \tilde{S}_{k}^{(1, 2), back}\cap \tilde{S}_{k}^{(1, 1), \leftarrow} \cap \tilde{S}_{k}^{(2, 1), \leftarrow}
\end{equation}
then depends on $\beta_{l}^{(t)}$ and $\gamma_{l}^{(t)}$ for $l=1, \ldots, k-1, n-k+2, \ldots, n$ and for $t = 1, 2$, and the choice of $\beta_{k}^{(1)}$, $\beta_{k}^{(2)}$. Fixing a $(\beta_{n-k+1}^{(1)}, \gamma_{n-k+1}^{(1)})$, finally, the condition 
\begin{equation}\label{eqn:multiCondition4}
(\beta_{n-k+1}^{(2)}, \gamma_{n-k+1}^{(2)}) \in \tilde{S}_{k}^{(1, 2), back}\cap \tilde{S}_{k}^{(2, 1), \leftarrow} \cap \tilde{S}_{k}^{(2, 2), \leftarrow}.
\end{equation}
 depends on $\beta_{l}^{(t)}$ and $\gamma_{l}^{(t)}$ for $l=1, \ldots, k-1, n-k+2, \ldots, n$, $t = 1, 2$, and $\beta_{k}^{(1)}$, $\beta_{k}^{(2)}$, $\gamma_{n-k+1}^{(1)}$. Combining these four statements leads to the first assertion.

The second assertion is proven analogously to the proof of Lemma \ref{lem:trLengthPivotProb}; we sketch the necessary ingredients here. Fixing an arbitrary choice of \[
\Big\{ \beta_{i}^{(t)}, \gamma_{i}^{(t)}, v_{i}^{(t)} : i= 1, \ldots, k-1, t=1,2\Big\},
\]
note that Condition \ref{eqn:multiCondition1} is asking the alignment of $\beta_{k}^{(1)}$ with 3 other fixed isometries. The property of the Schottky set $S^{(1)}$ (cf. Definition \ref{dfn:Schottky}) tells us that such alignments are realized by all but at most 3 choices of $\beta_{k}^{(1)} \in S^{(1)}$. This implies that at most $3\#S^{(1)}$ choices in $(S^{(1)})^{2} \supseteq \tilde{S}^{(1)}(v_{k}^{(1)})$ violate the condition. This implies that \[\begin{aligned}
&\Prob\left( (\beta_{k}^{(1)}, \gamma_{k}^{(1)}) \in \tilde{S}^{(1)}(v_{k}^{(1)})\,\, \textrm{violates Condition \ref{eqn:multiCondition1}} \, \Big|\, \beta_{i}^{(t)}, \gamma_{i}^{(t)}, v_{i}^{(t)} : i= 1, \ldots, k-1, t=1,2 \right) \\
&\le \frac{3\#S^{(1)}}{(\#S^{(1)})^{2} - 2\#S^{(1)}} \le \frac{5\#S^{(1)}}{(\#S^{(1)})^{2}}.
\end{aligned}
\]
For a similar reason, we can prove that the probabilities of violating Condition \ref{eqn:multiCondition2}, \ref{eqn:multiCondition3} and \ref{eqn:multiCondition4} are at most $5/\#S^{(2)}$, $5/\#S^{(1)}$ and $5/\#S^{(2)}$, respectively. Combining these, we can deduce \[
\Prob\left( \textrm{Condition \ref{eqn:multiCondition} violated at step $k$} \, \Big| \, \textrm{Condition \ref{eqn:multiCondition} violated till $k-1$} \right) \le \frac{10}{\#S^{(1)}} + \frac{10}{\#S^{(2)}}.
\]
By induction, we can conclude the second assertion.
\end{proof}

Recall that we have implemented pivotal times for random walks in Corollary \ref{cor:pivotCorCombined}. Following this idea, and also by combining Lemma \ref{lem:multiRepulse} and Lemma \ref{lem:trLengthPivotProb}, we deduce the following corollary: 

\begin{cor}\label{cor:multiRepulse}
Let $(X, G, o)$ be as in Convention \ref{conv:main} and $(Z_{n}^{(1)})_{n>0}, (Z_{n}^{(2)})_{n>0}$ be two independent random walks generated by a non-elementary measure $\mu$ on $G$. Then there exists $K > 0$ such that the following holds outside a set of probability $K e^{-n/K}$. 

The $n$-th step isometries $Z_{n}^{(1)}$, $Z_{n}^{(2)}$ arising from two random walks generate a free group of order 2. Moreover, the orbit map is a quasi-isometric embedding of $\langle Z_{n}^{(1)}, Z_{n}^{(2)} \rangle$ into a quasi-convex subset of $X$.
\end{cor}

It is not difficult to consider $k$ independent random walks; the arguments are identical. Hence, we conclude Theorem \ref{thm:qi}.

\section{Effective estimates for translation length} \label{section:effective}

In this section, we establish an effective version of Corollary \ref{cor:pivotCorCombined} that will be used for the counting problem. 

We will employ Schottky sets to construct the generating set $S''$. For this we introduce a notation. Given a set of sequences $S \subseteq G^{n}$, we define \[
\Phi(S^{4}):= \{\Pi(s_{1}) \Pi(s_{2})\Pi(s_{3})\Pi(s_{4}): s_{1}, s_{2}, s_{3}, s_{4} \in S\}.
\]
Applying the map $\Phi$ to $S^{4}$ does not erase any information because:

\begin{lem}\label{lem:SchottkyResult}
Let $S$ be a fairly long Schottky set. Then the map $\Phi$ is a bijection between $S^{4}$ and $\Phi(S^{4})$.
\end{lem}

\begin{proof}
This is a consequence of Lemma \ref{lem:concatSchottky} below.
\end{proof}

One technicality is that $\Phi(S^{4})$ cannot be a symmetric subset of $G$ for any Schottky set $S$. (This is due to our choice of Property (4) of Schottky sets.) To get around this technicality, we introduce another notation. Given a sequence $\alpha = (a_{1}, \ldots, a_{n}) \in G^{n}$, let us denote the sequence $(a_{n}^{-1}, \ldots, a_{1}^{-1})$ by $\alpha^{-1}$. 
For a set $S \subseteq G^{n}$, we define \[
\check{S} := \{\alpha^{-1} : \alpha \in S\} = \left\{ \big(a_{n}^{-1}, \ldots, a_{1}^{-1}\big) : \big(a_{1}, \ldots, a_{n}\big) \in S\right\}.
\]

\begin{lem}\label{lem:concatSchottky}
Let $k \in \mathbb{Z}_{>0}$ and let $S$ be a fairly long Schottky set. Let $\alpha_{i} \in S$ and $\epsilon_{i} \in \{\pm 1\}$ for $i = 1, \ldots, k$. Suppose that there does not exist $i$ such that $\alpha_{i} = \alpha_{i+1}$ and $\epsilon_{i}\epsilon_{i+1} = -1$ simultaneously holds. Then: \begin{enumerate}
\item the sequence of Schottky axes\[
\big(\Gamma(\alpha_{1}^{\epsilon_{1}}),\quad\Pi(\alpha_{1}^{\epsilon_{1}}) \Gamma(\alpha_{2}^{\epsilon_{2}}),\quad\ldots, \quad\Pi(\alpha_{1}^{\epsilon_{1}}) \cdots \Pi(\alpha_{k-1}^{\epsilon_{k}}) \Gamma(\alpha_{k}^{\epsilon_{k}}) \big)
\]is $D_{0}$-aligned, and 
\item $\Pi(\alpha_{1}^{\epsilon_{1}}) \cdots \Pi(\alpha_{k}^{\epsilon_{k}})$ is a nontrivial isometry.
\end{enumerate}
\end{lem}

\begin{proof}
Let $\alpha \in S$ and $\epsilon \in \{\pm 1\}$. Then \[
\diam\left(\pi_{\Gamma(\alpha^{\epsilon})}( \prodSeq(\alpha^{\epsilon})o) \cup o\right) = \diam\left(\prodSeq(\alpha^{\epsilon}) o \cup o\right) \ge 10E_{0} > K_{0}
\]
holds, which means that \[\begin{aligned}
\alpha &\notin \Big\{ \beta \in S : \textrm{$\big(\Pi(\alpha) o, \Gamma( \beta) \big)$ is $K_{0}$-aligned} \Big\},\\
\alpha&\notin \Big\{ \beta \in S : \textrm{$\big(\Gamma( \beta), \Pi(\beta) \Pi(\alpha^{-1}) o \big)$ is $K_{0}$-aligned} \Big\}.\\
\end{aligned}
\] By Property 4 of Schottky sets (cf. Definition \ref{dfn:Schottky}), we deduce that  $\big(\Gamma(\beta), \Pi(\beta) \Pi(\alpha^{\mathbf{\epsilon}}) o\big)$ and $\big(\Pi(\alpha^{\mathbf{\epsilon}} )o, \Gamma(\beta)\big)$ are $K_{0}$-aligned sequences for each $\beta \in S \setminus \{\alpha\}$. The latter statement means that  $\big(\Gamma(\beta^{-1}), \Pi(\beta^{-1}) \Pi(\alpha^{\mathbf{\epsilon}}) o\big)$ is $K_{0}$-aligned. ($\ast$)

Now for each $i$, we have the following cases. \begin{enumerate}[label=(\Roman*)]
\item $\alpha_{i} \neq \alpha_{i+1}$. In this case,  $\big(\Gamma(\alpha_{i}^{\epsilon_{i}}),   \Pi(\alpha_{i}^{\epsilon_{i}}) \Pi(\alpha_{i+1}^{\mathbf{\epsilon}_{i+1}}) o\big)$ is $K_{0}$-aligned due to ($\ast$). Moreover, $\big( \Pi(\alpha_{i}^{\epsilon_{i}}) o,   \Pi(\alpha_{i}^{\epsilon_{i}}) \Gamma(\alpha_{i+1}^{\mathbf{\epsilon}_{i+1}})  \big)$ is $0$-aligned. This is because $\Pi(\alpha_{i}^{\epsilon_{i}}) o$ projects onto $\Pi(\alpha_{i}^{\epsilon_{i}}) \Gamma(\alpha_{i+1}^{\mathbf{\epsilon}_{i+1}}) \ni \Pi(\alpha_{i}^{\epsilon_{i}}) o$ at itself.

Now by Lemma \ref{lem:1segment},$\big(\Gamma(\alpha_{i}^{\epsilon_{i}}),   \Pi(\alpha_{i}^{\epsilon_{i}}) \Gamma(\alpha_{i+1}^{\mathbf{\epsilon}_{i+1}}) \big)$ is $D_{0}$-aligned.
\item $\alpha_{i} = \alpha_{i+1}$: then $\epsilon_{i} = \epsilon_{i+1}$, and the sequence \[
(\Gamma(\alpha_{i}^{\epsilon_{i}}), \Pi(\alpha_{i}^{\epsilon_{i}}) \Gamma(\alpha_{i}^{\epsilon_{i}}))
\]
is $D_{0}$-aligned thanks to Property 5 of Schottky sets.
\end{enumerate}
Combining these stepwise $D_{0}$-alignment, we conclude Item (1).

For convenience, we denote $\Pi(\alpha_{1}^{\epsilon_{1}}) \cdots \Pi(\alpha_{k}^{\epsilon_{k}})$ by $w$. By Lemma \ref{lem:BGIPWitness}, $[o, wo]$ contains disjoint subsegments that $0.1E_{0}$-fellow travel with suitable translates of $\Gamma(\alpha_{1}^{\epsilon_{1}})$, $\ldots$, $\Gamma(\alpha_{k}^{\epsilon_{k}})$, respectively. Recall that these Schottky axes have diameter at least $10E_{0}$. Hence we have  \[
d(o, wo) \ge \left[\sum_{i=1}^{k} d\left(o, \Pi(\alpha_{k}^{\epsilon_{k}}) o\right) \right] - 0.2(k-1) E_{0} \ge 5E_{0} k. 
\]
In particular, $w$ is nontrivial and Item (2) follows.
\end{proof}

\begin{lem}\label{lem:triviaSchottkyInv}
Let $S \subseteq G^{n}$ be a set of sequences in $G$ and let $K_{0}>0$. Then the following hold.
\begin{enumerate}
\item $\Phi(\check{S}^{4})$ consists of the inverses of elements of $\Phi(S^{4})$.
\item $S$ is a fairly long $K_{0}$-Schottky set if and only if $\check{S}$ is.
\item If $S$ is a fairly long Schottky set, then $\Phi(\check{S}^{4})$ and $\Phi(S^{4})$ are disjoint.
\end{enumerate}
\end{lem}

\begin{proof}
\begin{enumerate}
\item This follows from the definition of $\check{S}$ and $\check{S}^{4}$. 
\item Let $S$ be a fairly long $K_{0}$-Schottky set. Note the following equivalence for each $\alpha \in G^{n}$ and $x \in X$: \[\begin{aligned}
\textrm{$\Gamma(\alpha)$ is $K_{0}$-contracting} &\Leftrightarrow 
\textrm{$\Gamma(\alpha^{-1})$ is $K_{0}$-contracting},\\
d(o, \Pi(\alpha)o) > 10E_{0} &\Leftrightarrow d(o, \Pi(\alpha^{-1})o) > 10E_{0},\\
\textrm{$\big(x, \Gamma(\alpha)\big)$, $\big(\Gamma(\alpha), \Pi(\alpha) x\big)$ are $K_{0}$-aligned} &\Leftrightarrow 
\textrm{$\big(x, \Gamma(\alpha^{-1})\big)$, $\big(\Gamma(\alpha^{-1}), \Pi(\alpha^{-1}) x\big)$ are $K_{0}$-aligned}  \\
\textrm{$\big(\Gamma(\alpha), \Pi(\alpha) \Gamma(\alpha)\big)$ is $K_{0}$-aligned} &\Leftrightarrow 
\textrm{$\big(\Gamma(\alpha^{-1}), \Pi(\alpha^{-1}) \Gamma(\alpha^{-1})\big)$ is $K_{0}$-aligned}.
\end{aligned}
\]
This implies that changing an element $\alpha \in S$ with $\alpha^{-1}$ does not affect the Schottky-ness of $S$. By converting all the elements into their inverses, we conclude that $\check{S}$ is also $K_{0}$-Schottky.
\item If $\mathbf{s} = (\alpha_{1}, \alpha_{2}, \alpha_{3}, \alpha_{4}) \in S^{4}$ and $\mathbf{s}' = (\check{\alpha}_{1}, \check{\alpha}_{2}, \check{\alpha}_{3}, \check{\alpha}_{4}) \in \check{S}^{4}$ have the same $\Phi$-values, then we have \begin{equation}\label{eqn:triviaSchottkyInv}
id = \Phi(\mathbf{s}) \cdot \Phi(\mathbf{s}')^{-1} = \Pi(\alpha_{1}) \cdots \Pi(\alpha_{4}) \cdot \Pi(\check{\alpha}_{4}^{-1}) \cdots \Pi(\check{\alpha}_{1}^{-1}).
\end{equation}
Here, $\alpha_{i}$'s and $\check{\alpha}_{i}^{-1}$'s are all elements of $S$. With this, Equation \ref{eqn:triviaSchottkyInv} contradicts Lemma \ref{lem:concatSchottky}. \qedhere
\end{enumerate}
\end{proof}

We are now ready to state and proved modified versions of Lemma \ref{lem:pivotFirstReduce} and Corollary \ref{cor:pivotCorCombined}.

\begin{lem}\label{lem:pivotFirstReduceCount}
For each $0 < q < p < 1$ there exists a constant $\epsilon = \epsilon(p, q) > 0$ such that the following holds.

Let $S\subseteq G^{M_{0}}$ be a fairly long $K_{0}$-Schottky set and let $\mu$ be a probability measure on $G$ such that \[
\mu \ge p\cdot  \big(\textrm{uniform measure on $\Phi(S^{4}) \cup \Phi(\check{S}^{(4)})$}\big).
\]
Then for each $n$ there exist a probability space $\Omega_{n}$, a measurable subset $B_{n} \subseteq \Omega_{n}$, a measurable partition $\mathcal{Q}_{n}$ of $B_{n}$ and RVs \[\begin{aligned}
\{w_{i} : i=0, \ldots, m(n)\} &\in G^{m(n) + 1}, \\
\mathcal{S} &\in \{S, \tilde{S}\}, \quad \quad \big(m(n) := \lfloor \epsilon n \rfloor\big)\\
\{\alpha_{i}, \beta_{i}, \gamma_{i}, \delta_{i}: i=1, \ldots, m(n)\}& \in \mathcal{S}^{4m(n)}\\
\end{aligned}
\]
such that the following hold: \begin{enumerate}
\item $\Prob(B_{n}) \ge 1 - (1-q)^{n}$ holds for each $n$.
\item On each equivalence class $\mathcal{F} \in \mathcal{Q}_{n}$, the isometries $(w_{i})_{i=0}^{m(n)}$ and the Schottky set $\mathcal{S}$ are constant, and $(\alpha_{i}, \beta_{i}, \gamma_{i}, \delta_{i})_{i=1}^{m(n)}$ are i.i.d.s distributed according to the uniform measure on $\mathcal{S}^{4}$.
\item The RV \[
w_{0} \Pi(\alpha_{1}) \Pi(\beta_{1})\Pi(\gamma_{1})\Pi(\delta_{1})w_{1} \cdots \Pi(\alpha_{m(n)}) \Pi(\beta_{m(n)}) \Pi(\gamma_{m(n)}) \Pi(\delta_{m(n)}) w_{m(n)}
\]
is distributed according to $\mu^{\ast n}$ on $\Omega_{n}$.
\end{enumerate}
\end{lem}

\begin{proof} 
Let $\mu_{1}$ be the uniform measure on $\Phi(S^{(4)})$ and $\mu_{2}$ be the uniform measure on $\Phi(\check{S}^{(4)})$. Recall that $\Phi(S^{4})$ and $\Phi(\check{S}^{4})$ are disjoint by Lemma \ref{lem:triviaSchottkyInv}.(3). Hence we have a decomposition 
\[
\mu = p \left( \frac{1}{2}\mu_{1} + \frac{1}{2} \mu_{2}\right)+ (1-p) \nu,
\]
Consider: \begin{itemize}
\item Bernoulli random variables $(\rho_{i})_{i \ge 0}$ with average $p$, 
\item Bernoulli random variables $(\sigma_{i})_{i \ge 0}$ with average $1/2$, 
\item $G$-valued random variables $(\eta_{i})_{i\ge 0}$ with the law $\mu_{1}$,
\item $G$-valued random variables $(\check{\eta}_{i})_{i\ge 0}$ with the law $\mu_{2}$, and 
\item $G$-valued random variables $(\nu_{i})_{i\ge 0}$ with the law $\nu$, 
\end{itemize}
all independent. We then define \[
g_{k}  = \left\{ \begin{array}{cc} \nu_{k} & \textrm{when}\,\, \rho_{k} = 0, \\
\eta_{k} & \textrm{when} \,\, \textrm{$\rho_{k}= 1$ and $\sigma_{k} =0$}, \\
\check{\eta}_{k} &  \textrm{when} \,\, \textrm{$\rho_{k}= 1$ and $\sigma_{k} =1$}.
\end{array}\right.
\]
Then $g_{k}$'s are i.i.d. distributed according to $\mu$. Setting $m(n)$ suitably, we define \[\begin{aligned}
C_{n} = \left\{\w : \sum_{k=1}^{n} \rho_{k} (1-\sigma_{k}) \ge m(n)\right\}, \quad \check{C}_{n} = \left\{\w : \sum_{k=1}^{n} \rho_{k} \sigma_{k} \ge m(n)\right\}.
\end{aligned}
\]
Note that $C_{n}$ and $\check{C}_{n}$ are determined by the values of $\{\rho_{k}, \sigma_{k}\}_{k}$. Moreover, the values of $\{\rho_{k}, \sigma_{k}\}_{k>0}$ determine the indices $i \in \{1, \ldots, n\}$ such that $\rho_{i} =1$ and $\sigma_{i} = 0$, and for each $\w \in C_{n}$ we can gather the $m(n)$ smallest such positive indices and label them by $\vartheta(1)< \ldots <\vartheta(m(n))$. For each $\w \in \check{C}_{n}$, we analogously define $\check{\vartheta}(1)< \ldots < \check{\vartheta}(m(n))$ to be the $m(n)$ smallest indices $i$ such that $\rho_{i} =1$ and $\sigma_{i} = 1$.

Now on $C_{n}$, we declare $\mathcal{S} := S$ and let $\mathcal{Q}_{n}^{(1)}$ be the measurable partition determined by the values of $\{\rho_{k}, \sigma_{k}, \nu_{k}\}_{k>0}$ as well as $\{\eta_{k} : k > \vartheta(m(n)) \}$. We then define
 \[\begin{aligned}
w_{i-1} &:= g_{\stopping(i-1) + 1} \cdots g_{\stopping(i)-1} \quad (i=1, \ldots, m(n)), \\
w_{m(n)} &:= g_{\stopping(m(n)) + 1} \cdots g_{n}, \\
(\alpha_{i}, \beta_{i}, \gamma_{i}, \delta_{i})&:= \Phi^{-1} (g_{\stopping(i)}) \quad (i=1, \ldots, m(n)).
\end{aligned}
\]
Then on each equivalence class in $\mathcal{Q}_{n}^{(1)}$, $w_{i}$'s are fixed and $(\alpha_{i}, \beta_{i}, \gamma_{i}, \delta_{i})$ are uniformly distributed on $S^{4m(n)}$. Moreover, on $B_{n}$ we have\begin{equation}\label{eqn:rwMatch}
\begin{aligned}
&g_{1} \cdots g_{n} \\
&= 
w_{0} \Pi(\alpha_{1}) \Pi(\beta_{1})\Pi(\gamma_{1})\Pi(\delta_{1})w_{1} \cdots \Pi(\alpha_{m(n)}) \Pi(\beta_{m(n)}) \Pi(\gamma_{m(n)}) \Pi(\delta_{m(n)}) w_{m(n)}.
\end{aligned}\end{equation}
Similarly, on $\check{C}_{n} \setminus C_{n}$, we first declare $\mathcal{S} := \check{S}$ and let $\mathcal{Q}_{n}^{(2)}$ be the measurable partition determined by the values of $\{\rho_{k}, \sigma_{k}, \nu_{k}\}_{k>0}$ as well as $\{\check{\eta}_{k} : k > \check{\vartheta}(m(n)) \}$. We then define
 \[\begin{aligned}
w_{i-1} &:= g_{\check{\stopping}(i-1) + 1} \cdots g_{\check{\stopping}(i)-1} \quad (i=1, \ldots, m(n)), \\
w_{m(n)} &:= g_{\check{\stopping}(m(n)) + 1} \cdots g_{n}, \\
(\alpha_{i}, \beta_{i}, \gamma_{i}, \delta_{i})&:= \Phi^{-1} (g_{\check{\stopping}(i)}) \quad (i=1, \ldots, m(n)).
\end{aligned}
\]
Then on each equivalence class in $\mathcal{Q}_{n}^{(2)}$, $w_{i}$'s are fixed and $(\alpha_{i}, \beta_{i}, \gamma_{i}, \delta_{i})$ become i.i.d. distributed according to the uniform measure on $\check{S}^{4}$. Moreover, Equation \ref{eqn:rwMatch} still holds on $\check{C}_{n}$. These constructions realize Item (1) and Item (2) of the conclusion on the set $B_{n} := C_{n} \cup \check{C}_{n}$.

Finally, we define $w_{i}$'s and $\{\alpha_{i}, \beta_{i}, \gamma_{i}, \delta_{i}\}$ arbitrarily on $(C_{n} \cup \check{C}_{n})^{c}$ so that Equation \ref{eqn:rwMatch} still holds. Since $g_{1} \cdots g_{n}$ is distributed according to $\mu^{n}$, we conclude Item (3) of the conclusion.

It remains  to precisely choose $\epsilon$ and $m(n)$. Note that $\sum_{k=1}^{n} \rho_{k} < 2m(n)$ holds  outside $C_{n} \cup \check{C}_{n}$. Chernoff-Hoeffding's inequality as in \cite[Theorem 1]{hoeffding} tells us that \[
\Prob\left( \sum_{k=1}^{n} \rho_{k} \le 2\epsilon n \right) \le\left(f(\epsilon) :=\left( \frac{p}{2\epsilon}\right)^{2\epsilon} \cdot \left(\frac{1-p}{1-2\epsilon} \right)^{1-2\epsilon} \right)^{n}.
\]
Since $\lim_{\epsilon \rightarrow 0} f(\epsilon) = 1-p$ is smaller than $1-q$, we can pick small $\epsilon$ so that $\Prob \left( \sum_{k=1}^{n} \rho_{k} \le 2\epsilon n \right) \le (1-q)^{n}$ holds. By setting $m(n) = \lfloor \epsilon n \rfloor$, we arrive at the desired conclusion.
\end{proof}

Recall that we obtained Corollary \ref{cor:pivotCorCombined} by combining Lemma \ref{lem:pivotFirstReduce}, \ref{lem:pivotSecondReduce}, \ref{lem:selfRepulse} and \ref{lem:trLengthPivotProb}. By replacing Lemma \ref{lem:pivotFirstReduce} with Lemma \ref{lem:pivotFirstReduceCount}, we obtain the following:

\begin{cor}\label{cor:pivotCorCombinedCount}
For each $0 < q < p < 1$ there exists a constant $\epsilon = \epsilon(p, q) > 0$ and $M = M(p, q) > 0$ such that the following holds.

Let $S\subseteq G^{M_{0}}$ be a fairly long $K_{0}$-Schottky set with cardinality $N_{0} > M$. Let $\mu$ be a probability measure on $G$ such that \[
\mu \ge p\cdot \big(\textrm{uniform measure on $\Phi(S^{4}) \cup \Phi(\check{S}^{(4)})$}\big).
\]
Then for each $n$ there exist an integer $m(n)$,  a probability space $\Omega_{n}$ with a measurable subset $A_{n} \subseteq \Omega_{n}$, a measurable partition $\mathcal{P}_{n}$ of $A_{n}$ and RVs \[
\begin{aligned}
Z_{n} &\in G, \\
\mathcal{S} &\in \{S, \check{S}\},\\
\{w_{i} : i = 0, \ldots, m(n)\} &\in G^{m(n) + 1}, \\
\{\beta_{i}, \gamma_{i} : i = 1, \ldots, m(n)\} &\in S^{2m(n)}
\end{aligned}
\]
such that the following hold: \begin{enumerate}
\item $\Prob(A_{n}) > 1-3\cdot(1-q)^{n}$ and $m(n) > \epsilon n$ for eventual $n$.
\item On $A_{n}$, $(w_{i})_{i=0}^{m(n)}$ is a $D_{0}$-pre-aligned sequence in $G$ and $w_{m(n)}^{-1} w_{0}$ is a $D_{0}$-pre-aligned isometry.
\item On each $\mathcal{E} \in \mathcal{P}_{n}$, $(w_{i})_{i=0}^{m(n)}$ and $\mathcal{S}$ are constant and $(\beta_{i}, \gamma_{i})_{i=1}^{m(n)}$ are i.i.d.s distributed according to the uniform measure on $\tilde{\mathcal{S}}(id)$.
\item $Z_{n}$ is distributed according to $\mu^{\ast n}$ on $\Omega_{n}$ and \[
Z_{n} = w_{0} \Pi(\beta_{1}) v_{1} \Pi(\gamma_{1}) w_{1} \cdots \Pi(\beta_{m(n)})v_{m(n)} \Pi(\gamma_{m(n)}) w_{m(n)}
\]holds on $A_{n}$.
\end{enumerate}
\end{cor}

\begin{proof}
First, given the inputs $p$ and $q$, Lemma \ref{lem:pivotFirstReduceCount} provides the constant $\epsilon_{1} = \epsilon(p, q)$, and we set $\epsilon = 0.01 \epsilon_{1}$. Now for each $n$ with $N_{1} :=\lfloor \epsilon_{1} n \rfloor$, Lemma \ref{lem:pivotFirstReduceCount} also provides  a probability space $\Omega_{n}$ with a subset $B=B_{n}$, a measurable partition $\mathcal{Q}_{n}$ of $B_{n}$, a random fairly long $K_{0}$-Schottky set $\mathcal{S} \in \{S, \check{S}\}$ of cardinality $N_{0}$, random variables $Z_{n} \in G$, $(w_{i})_{i=0}^{N_{1}} \in G^{N_{1} + 1}$ and $(\alpha_{i}, \beta_{i}, \gamma_{i}, \delta_{i})_{i=1}^{N_{1}} \in \mathcal{S}^{4N_{1}}$ such that the following holds: \begin{enumerate}
\item $\Prob_{n}(B) \ge 1 - (1-q)^{n}$.
\item On each $\mathcal{F} \in \mathcal{Q}_{n}$, the sequence $(w_{i})_{i=0}^{N_{1}}$ and the Schottky set $\mathcal{S}$ are constant and $(\alpha_{i}, \beta_{i}, \gamma_{i}, \delta_{i})_{i}$ are i.i.d.s distributed according to the uniform measure on $\mathcal{S}^{4}$.
\item $Z_{n}$ is distributed according to $\mu^{\ast n}$ on $\Omega_{n}$ and \[
Z_{n} = w_{0} \Pi(\alpha_{1}) \Pi(\beta_{1}) \Pi(\gamma_{1}) \Pi(\delta_{1}) w_{1} \cdots \Pi(\alpha_{N_{1}}) \Pi(\beta_{N_{1}}) \Pi(\gamma_{N_{1}}) \Pi(\delta_{N_{1}}) w_{N_{1}}
\]
holds on $B$.
\end{enumerate}
Now, as we did in the proof of Corollary \ref{cor:pivotCorCombined}, we apply Lemma \ref{lem:pivotSecondReduce}; this time, it is with the input probability $\operatorname{Dirac}_{id}$ in place of $\mu$, with the input Schottky set $\mathcal{S}$ in place of $S$, and with the input integer $N_{1}$ in place of $n$. As a result we obtain $N_{2} = 2\lfloor 0.25 N_{1} \rfloor$. Just as in the proof of Corollary \ref{cor:pivotCorCombined}, we obtain a measurable set $B'$ with measurable partition $\mathcal{Q}'$ and RVs $(w_{i}', \beta_{i}', \gamma_{i}')_{i}$ such that the following holds: \begin{enumerate}
\item $B'$ has probability at least $\left(1 - (3 \sqrt[4]{4/N_{0}})^{N_{1}} \right) \cdot \Prob(B) \ge \left(1 - (3 \sqrt[4]{4/N_{0}})^{N_{1}} \right) \cdot (1-(1-q)^{n})$,
\item On $B'$ we have \[
Z_{n} = w_{0}'  \Pi(\beta_{1}') \Pi(\gamma_{1}') w_{1}' \cdots \Pi(\beta_{N_{1}}') \Pi(\gamma_{N_{2}}') w_{N_{2}}',
\]
\item On each $\mathcal{F}' \in \mathcal{Q}'$, $(w_{i}')_{i=0}^{N_{2}}$ is a fixed $D_{0}$-pre-aligned sequence in $G$ for $\mathcal{S}$ and $(\beta_{i}', \gamma_{i}')_{i=1}^{N_{2}}$ are i.i.d.s distributed according to the uniform measure on $\tilde{\mathcal{S}}(id)$.
\end{enumerate}

Finally, by setting $m(n) := 2^{\lfloor \log_{2} N_{2} \rfloor - 1} \ge N_{2}/4$ and by considering the refinement of $\mathcal{Q}'$ as in the proof of Corollary \ref{cor:pivotCorCombined}, we can realize Item (2), (3), (4) of the conclusion on a subset $A_{n}$ of $B$ such that \[
\Prob(A_{n}) \ge \big(1- (6/N_{0})^{N_{2}/4}\big) \cdot \Prob(B').
\]
Moreover, note that $m(n) \ge N_{2}/4 \ge N_{1}/5 \ge \epsilon n$ for large enough $n$, which establishes the latter half of Item (1) of the conclusion.

At this point, by requiring \[
N_{0} > M:= \max\left(\left(\frac{3^{4} \cdot 4}{ (1-q)^{4}}\right)^{4/\epsilon_{1}}, \left(\frac{6}{1-q}\right)^{8/\epsilon_{1}}\right)+1,
\]
we conclude \[
\Prob(A_{n}) \ge 1-(1-q)^{n} - (3 \sqrt[4]{4/N_{0}})^{N_{1}}-( 6/N_{0})^{N_{2}/4} \ge 1-3\cdot(1-q)^{n}
\]
for large enough $n$. This settles Item (1) of the conclusion.
\end{proof}

We now combine Corollary \ref{cor:pivotCorCombinedCount} with Lemma \ref{lem:selfRepulse2}. Note that on $A_{n}$, $(w_{i})_{i=0}^{m(n)}$ is always a $D_{0}$-pre-aligned sequence and $\phi_{1} := w_{m(n)}^{-1} w_{0}$ is a $D_{0}$-pre-aligned isometry. Lemma \ref{lem:selfRepulse2} then implies the following on $A_{n}$: \[
Z_{n} = w_{0} \Pi(\beta_{1}) \Pi(\gamma_{1}) w_{1} \cdots \Pi(\beta_{m(n)}) \Pi(\gamma_{m(n)}) w_{m(n)}
\]
is a contracting isometry, and moreover, there exist Schottky axes $\kappa_{1}, \ldots, \kappa_{2m(n)}$ such that \[
\big( \ldots, Z_{n}^{-1} \kappa_{1}, \ldots, Z_{n}^{-1} \kappa_{2m(n)}, \kappa_{1}, \ldots, \kappa_{2m(n)}, Z_{n} \kappa_{1}, \ldots, Z_{n} \kappa_{2m(n)},\ldots \big)
\]
is $D_{0}$-semi-aligned, and that $(o, \kappa_{1}, \kappa_{2m(n)}, Z_{n} o)$ is also $D_{0}$-semi-aligned. Then Lemma \ref{lem:BGIPWitness} implies that for each $j$, $[o, Z_{n}^{j} o]$ contains $2m(n) \cdot j$ disjoint subsegments, each $0.1E_{0}$-fellow traveling with some $Z_{n}^{l} \kappa_{i}$ $(l=0, \ldots, j-1, i=1, \ldots, 2m(n)$). This implies that \[
d(o, W_{m(n)}^{j} o) \ge \sum_{i=1}^{2m(n)} \sum_{l=0}^{j-1} \Big( \diam(Z_{n}^{l} \kappa_{i} ) - 0.2E_{0}\Big) \ge 5 E_{0} m(n).
\] As a consequence, we obtain the following corollary: 

\begin{cor}\label{cor:countTr}
For each $0 < q < p < 1$ there exists a constant $\epsilon = \epsilon(p, q) > 0$ and $M = M(p, q) > 0$ such that the following holds.

Let $S\subseteq G^{M_{0}}$ be a fairly long $K_{0}$-Schottky set with cardinality $N_{0} > M$. Let $\mu$ be a probability measure on $G$ such that \[
\mu \ge p\cdot \big(\textrm{uniform measure on $\Phi(S^{4}) \cup \Phi(\check{S}^{(4)})$}\big),
\]
and let $(Z_{n})_{n>0}$ be the random walk generated by $\mu$. Then  \[
\Prob\Big(Z_{n}\textrm{is a contracting element with $\tau(Z_{n}) \ge 5 E_{0} \epsilon n$}\Big) \ge 1-3 (1-q)^{n}
\]
holds for large enough $n$.
\end{cor}

\section{Counting problem}\label{section:counting}

We now present a quantitative version of the main theorem in \cite{choi2021generic}.

\begin{thm}[Translation length grows linearly]\label{thm:genericityGen}
Let $(X, G)$ be as in Convention \ref{conv:main}. Then for each $\lambda>1$, there exists $\lambda_{0}>0$ satisfying the following. Let $G$ be a finitely generated non-elementary subgroup of $\Isom(X)$ and $S' \subseteq G$ be a finite symmetric generating set. 

Then there exist a subset $S''$ of $G$ containing $S'$ such that $\#S'' \le (1+\lambda) \#S' + \lambda_{0}$, and a constant $K>0$, such that for each large $n$ we have\[
\frac{\#\{g \in B_{S''}(n) : g \,\,\textrm{is not contracting or}\,\,\tau_{X}(g) \le Kn\}}{\#B_{S''}(n)} \le K e^{-n/K}.
\]
\end{thm}

\begin{proof}
First note that we can replace any finite symmetric generating set $S'$ with $S' \cup \{id\}$ and then find $S''$, at the cost of increasing $\lambda_{0}$ by 1. For this reason, we from now on assume that $S'$ contains $id$.

The assumption $\lambda > 1$ implies that $\frac{\lambda}{1+\lambda} > 1/2$, so we can take parameters $q$ and $p$ such that $1/2 < q < p < \frac{\lambda}{1+\lambda}$. Take $\epsilon = \epsilon(p, q)$ and $M = M(p, q)$ as in Corollary \ref{cor:countTr}. 

Note that the function $f(x) := \frac{x}{x+1}$ is strictly increasing on the positive reals and $\lim_{x \rightarrow +\infty} f(x) = 1$. Meanwhile, note the equivalence \[
p(1+\lambda) < \lambda \Leftrightarrow p < \lambda(1-p) \Leftrightarrow \frac{p}{\lambda(1-p)} < 1.
\]
Since we took $p < \frac{\lambda}{1+\lambda}$, there exists some $\lambda_{0} > M^{4}$ such that $f(\sqrt[4]{\lambda_{0}}) > \sqrt[4]{p/\lambda(1-p)}$. Furthermore, since $1/2p$ is also smaller than $1$, we can further require that $f(\sqrt[4]{\lambda_{0} }-1) > \sqrt[4]{1/2p}$.

Using Lemma \ref{lem:Schottky}, we take a fairly long Schottky set $S$ in $G$ with cardinality \[
N_{0} = \max \left( \left\lfloor\sqrt[4]{\frac{1}{2} \lambda \#S'} \right\rfloor, \sqrt[4]{\lambda_{0}}\right).
\]
Let $K_{0}$ be the Schottky parameter of $S$ and $D_{0}, D_{1}, E_{0}$ be the associated constants.

Note that \[
2N_{0}^{4} \le 2 \max \left( \frac{1}{2} \lambda \#S', \lambda_{0}\right) \le \lambda \#S' + \lambda_{0}.
\] Meanwhile, using the monotonicity of $f(x) = \frac{x}{x+1}$ we deduce \begin{align}\label{eqn:monotonF1}
\frac{N_{0}^{4}}{( N_{0} + 1)^{4}} &= \big(f(N_{0})\big)^{4} \ge \Big(f \left(\sqrt[4]{\lambda_{0}}\right)\Big)^{4} \ge \frac{p}{\lambda (1-p)}, \\ \label{eqn:monotonF2}
\frac{(N_{0}-1)^{4}}{N_{0} ^{4}} &= \big(f(N_{0} - 1)\big)^{4}\ge \Big(f \left( \sqrt[4]{\lambda_{0}} - 1 \right) \Big)^{4} \ge 1/2p.
\end{align}
Note that Inequality \ref{eqn:monotonF1} implies \[
2N_{0}^{4} = 2(N_{0}+1)^{4} \cdot \frac{N_{0}^{4}}{( N_{0} + 1)^{4}}  \ge \lambda\#S' \cdot \frac{p}{\lambda (1-p)}\ge \frac{p}{1-p} \#S'.
\]

With these estimates, we set $S'' = S' \cup \Phi(S^{4}) \cup \Phi(\check{S}^{4})$, which is still a symmetric set. Moreover, we have \[
\#S'' \le \#S' + 2 (\#S)^{4} = \#S' + 2N_{0}^{4} \le (1+\lambda)\#S'^{2} + \lambda_{0}
\]
and \[
\frac{\# \left( \Phi(S^{4}) \cup \Phi(\check{S}^{4}) \right)}{\#S''} = 
\frac{2N_{0}^{4} }{\#S''} \ge  \frac{2N_{0}^{4}}{2N_{0}^{4} + \#S'} \ge \frac{1}{1 + \frac{1-p}{p}} = p.
\]
This implies that the uniform measure $\mu$ on $S''$ satisfies the assumption of Corollary \ref{cor:countTr}. At this moment, let \[
\mathcal{A}_{n} = \{ g \in G : \textrm{$g$ is contracting and $\tau(g) \ge 5E_{0} \epsilon n$}\}.
\]
Then Corollary \ref{cor:countTr} tells us that, for large enough $n$, the $n$-th step $Z_{n}$ of the $\mu$-random walk is in $\mathcal{A}_{n}$ except for probability at most $3(1-q)^{n}$. 

Meanwhile, each element $g \in B_{S''}(id, n)$ arises as an $n$-th step $Z_{n}$ of the $\mu$-random walk, with probability at least $(\#S'')^{-n}$. Hence, we have \[
\# \left( B_{S''}(id, n) \setminus \mathcal{A}_{n} \right) \le \frac{\Prob(Z_{n} \notin \mathcal{A}_{n})}{(\#S'')^{-n}} \le 3(\#S'')^{n} (1-q)^{n}.
\]

Meanwhile, let $\alpha_{1}, \ldots, \alpha_{4n}$ of $S$ be such that $\alpha_{i} \neq \alpha_{i+1}$ for $i=1, \ldots, 4n-1$, and let $\epsilon_{1}, \ldots, \epsilon_{n} \in \{1, -1\}$. Lemma \ref{lem:concatSchottky}.(2) guarantees that the isometries \[
\prod_{i=1}^{4n} \Pi(\alpha_{i}^{\epsilon_{\lceil i/4 \rceil}}) = \Pi(\alpha_{1}^{\epsilon_{1}}) \cdots \Pi\big(\alpha_{4}^{\epsilon_{1}}\big) \cdot \Pi(\alpha_{5}^{\epsilon_{2}}) \cdots \Pi(\alpha_{8}^{\epsilon_{2}}) \cdots \Pi(\alpha_{4n-3}^{\epsilon_{n}}) \cdots \Pi(\alpha_{4n}^{\epsilon_{n}})
\]
are distinct for different choices of such $\big((\alpha_{i})_{i=1}^{4n}, (\epsilon_{j})_{j=1}^{n} \big)$'s. All these isometries are contained in $B_{S''}(id, n)$. The number of such elements is at least \[\begin{aligned}
(\# S - 1)^{4n} \cdot 2^{n} &= 2^{n} (N_{0} - 1)^{4n} = \left(\frac{2N_{0}^{4}}{\#S''} \cdot (\#S'') \cdot \left( \frac{N_{0}-1}{N_{0}} \right)^{4} \right)^{n}&\\
 &\ge \left(p \cdot \frac{1}{2p} \cdot (\# S'') \right)^{n}. & (\because \textrm{Inequality \ref{eqn:monotonF2}})
\end{aligned}
\]
Hence, for large enough $n$ we have \[
\frac{\# (B_{S''}(id, n) \setminus \mathcal{A}_{n})}{\#B_{S''}(id, n)} \le \frac{3 (1-q)^{n} (\# S'')^{n}}{(1/2)^{n} (\#S'')^{n} } = 3 \cdot (2(1-q))^{n}
\]
This decays exponentially since $1-q < 1/2$ as desired.
\end{proof}

Using a similar argument that involves pivoting for quasi-isometric embedding of $k$ independent random walks (Lemma \ref{lem:multiRepulse} and \ref{lem:multiPivotProb}), we can deduce the following version of Theorem \ref{thm:qiCount}:

\begin{thm}\label{thm:qiCountingStrong}
For each $k \in \Z_{>0}$ and $\lambda>1$, there exists $\lambda_{0}>0$ satisfying the following. Let $G$ be a finitely generated non-elementary subgroup of $\Isom(X)$ and $S' \subseteq G$ be a finite symmetric generating set. 

Then there exists a set $S'' \supseteq S'$ of $G$ with $\#S' \le (1+\lambda) \# S' + \lambda_{0}$ such that for all $k$-tuples $(g_{1}, \ldots, g_{k})$ of elements in $B_{S''}(n)$ except an exponentially decaying proportion, $\langle g_{1}, \ldots, g_{k} \rangle$ is q.i. embedded into a quasi-convex subset of $X$.
\end{thm}

\medskip
\bibliographystyle{alpha}
\bibliography{Combined_v3}

\end{document}